\def\int{\displaystyle\!int}
\def\lim{\displaystyle\!lim}
\def\sum{\displaystyle\!sum}
\def\sup{\displaystyle\!sup}
\def\inf{\displaystyle\!inf}
\def\cap{\displaystyle\!cap}
\def\max{\displaystyle\!max}
\def\min{\displaystyle\!min}
\newtheorem{theorem}{\bf Theorem}[section]
\newtheorem{lemma}{\bf Lemma}[section]
\newtheorem{definition}{\bf Definition}[section]
\newtheorem{proposition}{\bf Proposition}[section]
\newtheorem{corollary}{\bf Corollary}[section]
\newtheorem{remark}{\bf Remark}[section]
\begin{document}

\title{Carleman estimates for forward and backward
stochastic fourth order Schr\"{o}dinger equations and their applications }
\author{Peng Gao
\\[2mm]
\\{ School of Mathematics and Statistics, and Center for Mathematics}
\\{and Interdisciplinary Sciences, Northeast Normal University, }
\\{Changchun 130024,  P. R. China}
\\[2mm]
\small Email: gaopengjilindaxue@126.com }
\date{}
\maketitle

\vbox to -13truemm{}

\begin{abstract}
In this paper, we establish the Carleman estimates for forward and backward
stochastic fourth order Schr\"{o}dinger equations, on basis of which, we can obtain the observability, unique continuation property and the exact controllability for the
forward and backward
stochastic fourth order Schr\"{o}dinger equations.
\\[6pt]
{\sl Keywords:  stochastic fourth order Schr\"{o}dinger equation, Carleman
estimate, observability, unique continuation
property, exact controllability}
\\
{\sl Mathematics Subject Classification (2010): 35Q40, 60H15, 93B05, 93B07 }
\end{abstract}
\section{Introduction}
\par
The fourth order Schr\"{o}dinger equation reads as
$$iy_{t}+y_{xxxx}=0.$$
This equation
arises in quantum mechanics, nonlinear
optics, and plasma physics, and its general nonlinear form
$$iy_{t}+\frac{1}{2}y_{xx}+y_{xxxx}+|y|^{2p}y=0$$
has been introduced in
\cite{K1,K2} to take into account the role of small fourth order dispersion terms in the
propagation of intense laser beams in a bulk medium with Kerr nonlinearity, where $p\geq 1$ is an integer number. The
existence and uniqueness of the solution have been studied intensively from the mathematical
perspective; see \cite{H1,H2,P1,P2} and the references therein.
Similar to its deterministic counterpart, the stochastic fourth order Schr\"{o}dinger equation plays
an important role in quantum mechanics.
\par
The main purpose of this paper is to estiablish
Carleman estimates for forward and backward
stochastic fourth order Schr\"{o}dinger equations.
Carleman estimate is an $L^{2}$-weighted estimate with large
parameter for a solution to a partial differential equation (PDE). Carleman estimate was first
established by Carleman \cite{C3} for a two-dimensional elliptic
equation. It is an important tool for the study of unique continuation
property, stabilization, controllability and inverse problems for PDEs. Although there are numerous results
for the Carleman estimate for deterministic PDEs, very little
is known about the corresponding stochastic situation. The Carleman estimates for
stochastic heat equation, stochastic wave equation, stochastic 
Korteweg-de Vries equation, stochastic Kuramoto-Sivashinsky equation, stochastic Kawahara equation and stochastic second order Schr\"{o}dinger equation
were completed (see, for instance, \cite{2,3,4,5,6,66,666,7}). But nothing is known for stochastic fourth order Schr\"{o}dinger equation. To the knowledge
of the authors, the Carleman estimates in this paper are new, it is the first
attempt for forward and backward stochastic fourth order Schr\"{o}dinger equation.
The Carleman estimate for deterministic fourth order Schr\"{o}dinger equation has been estiablished in \cite{Z2}.
\par
Through this paper, we make the following assumptions:
\par
(H1) Let $T>0, I=(0,1)$ and $I_{0}$ be a nonempty open subset of $I.$ Set $Q=I\times (0,T)$ and $Q^{I_{0}}=I_{0}\times (0,T).$

\par
(H2)
Let $(\Omega,\mathcal {F},\{\mathcal {F}_{t}\}_{t\geq0},P)$ be a
complete filtered probability space on which a one-dimensional
standard Brownian motion $\{w(t)\}_{t\geq 0}$ is defined such that
$\{\mathcal {F}_{t}\}_{t\geq0}$ is the natural filtration generated
by $w(\cdot),$ augmented by all the $P$-null sets in $\mathcal
{F}.$ Let $H$ be a Banach space, and let $C([0,T];H)$ be the Banach
space of all $H-$valued strongly continuous functions defined on
$[0,T].$ We denote by $L_{\mathcal {F}}^{p}(0,T;H)(1\leq p<+\infty)$ the Banach space
consisting of all $H-$valued $\{\mathcal {F}_{t}\}_{t\geq0}-$adapted
processes $X(\cdot)$ such that
$E(\|X(\cdot)\|^{p}_{L^{p}(0,T;H)})<\infty;$ by
$L_{\mathcal {F}}^{\infty}(0,T;H)$ the Banach space consisting of
all $H-$valued ${\mathcal \{\mathcal {F}_{t}}\}_{t\geq0}-$adapted bounded
processes; by $L_{\mathcal {F}}^{2}(\Omega;C([0,T];H))$ the
Banach space consisting of all $H-$valued $\{\mathcal
{F}_{t}\}_{t\geq0}-$adapted continuous processes $X(\cdot)$ such
that $E(\|X(\cdot)\|^{2}_{C([0,T];H)})<\infty;$ and by $C_{\mathcal {F}}([0,T];L^{2}(\Omega;H))$ the
Banach space consisting of all $H-$valued $\{\mathcal
{F}_{t}\}_{t\geq0}-$adapted processes $X(\cdot)$ such
that $\|X(\cdot)\|_{L^{2}(\Omega;H)} \in C([0,T]).$ All the
above spaces are endowed with the canonical norm.
\par
(H3) We denote by $L^{2}(I)$ the space of all Lebesgue square integrable
complex-valued functions on $I$. The inner product on $L^{2}(I)$ is
\begin{eqnarray*}
\langle u,v\rangle=\int_{I}u\overline{v}dx,
\end{eqnarray*}
for any $u,v\in L^{2}(I),$ where $\overline{\bullet}$ denotes the conjugate of $\bullet.$ The norm on $L^{2}(I)$ is
\begin{eqnarray*}
\|u\|_{L^{2}(I)}=\langle u,u\rangle^{\frac{1}{2}},
\end{eqnarray*}
for any $u\in L^{2}(I).$
\par
$H^{s}(I)(s\geq 0)$ are the classical Sobolev
spaces of complex-valued functions on $I$. The definition of $H^{s}(I)$ can be found in \cite{L1}.
\par For $0\leq s\leq4,$ the $s$-compatibility conditions are following:
\begin{eqnarray*}
\begin{array}{l}
\left\{
\begin{array}{llll}
 \varphi(0)=\varphi(1)=0 &\rm{when}& \frac{1}{2}<s\leq\frac{3}{2},\\
 \varphi(0)=\varphi(1)=\varphi^{\prime}(0)=\varphi^{\prime}(1)=0 &\rm{when}& \frac{3}{2}<s\leq 4.\\
\end{array}
\right.
\end{array}
\end{eqnarray*}
Set
\begin{eqnarray*}
X_{s}&=&\{\varphi\in H^{s}(I)~|~\varphi ~~ \rm{satisfies~~ the
~~}\emph{s}\rm{-compatibility ~~conditions}\},
\end{eqnarray*}
the norms on $X_{s}$ is defined by
$\|\varphi\|_{X_{s}}\triangleq \|\varphi\|_{H^{s}(I)},$ for any
$\varphi \in X_{s}.$
\par
The space $X_{s}^{\prime}$ denotes the dual space of $X_{s}$ with respect to the space $L^{2}(I),$ the $(\cdot,\cdot)_{X_{s}^{\prime},X_{s}}$ denotes the duality pairing between $X_{s}^{\prime}$ and $X_{s}.$
\par
(H4) Let $\widehat{\psi} \in
C^{\infty}(\overline{I})$ satisfy that $\widehat{\psi} >0$ in $I,$
$\widehat{\psi}(0)=\widehat{\psi}(1)=0,$ $\|\widehat{\psi}\|_{C(\overline{I})}=1,$ $|\widehat{\psi}_{x}|>0$
in $\overline{I}\backslash I_{0},$ $\widehat{\psi}_{x}(0)>0$ and
$\widehat{\psi}_{x}(1)<0.$ For any given positive constants $\lambda$ and
$\mu,$ we set $\widehat{a}(x,t)=\frac{e^{\mu(\widehat{\psi}(x)+3)}-e^{5\mu}}{t(T-t)},$
$\widehat{l}=\lambda \widehat{a},$ $\widehat{\theta}=e^{\widehat{l}}$ and
$\widehat{\varphi}(x,t)=\frac{e^{\mu(\widehat{\psi}(x)+3)}}{t(T-t)},\forall (x,t)\in Q.$
\par
Let $\widetilde{\psi}(x)=(x-x_{0})^{2}+\delta_{0},$ where $\delta_{0}$ is a positive constant such that $\widetilde{\psi}\geq \frac{3}{4}\|\widetilde{\psi}\|_{L^{\infty}(I)}$ and $x_{0}>1$. For any given positive constants $\lambda$ and
$\mu,$ we set $\widetilde{a}(x,t)=\frac{e^{\mu\widetilde{\psi}(x)}-e^{\frac{3}{2}\|\widetilde{\psi}\|_{L^{\infty}(I)}\mu}}{t(T-t)},$
$\widetilde{l}=\lambda \widetilde{a},$ $\widetilde{\theta}=e^{\widetilde{l}}$ and
$\widetilde{\varphi}(x,t)=\frac{e^{\mu\widetilde{\psi}(x)}}{t(T-t)},\forall (x,t)\in Q.$

\par
(H5) Unless otherwise stated, $C$ stands for a generic positive
constant whose value can change from line to line. Whenever necessary, the dependence of a constant $C$ on some parameters, say $``\cdot"$ ,
will be written by $C(\cdot)$.
\par
(H6) $a,b\in L^{\infty}_{\mathcal {F}}(0,T;W^{4,\infty}(I)).$
\subsection{Carleman estimates for a forward
stochastic fourth order Schr\"{o}dinger equation and their applications}
\par
In this section, we consider the following system
\begin{eqnarray}\label{1}
\begin{array}{l}
\left\{
\begin{array}{lll}idy+y_{xxxx}dt=fdt+gdw\\
y(0,t)=0=y(1,t)\\
y_{x}(0,t)=0=y_{x}(1,t)\\
y(x,0)=y_{0}(x)
\end{array}
\right.
\end{array}
\begin{array}{lll}\textrm{in}~Q,
\\\textrm{in}~(0,T),\\\textrm{in}~(0,T),\\\textrm{in}~I.
\end{array}
\end{eqnarray}
\par
First, we establish the following global Carleman estimate.
\begin{theorem}\label{T1}
Let $y_{0}\in X_{3}$ and $f,g\in L^{2}_{\mathcal{F}}(0,T;X_{3})$ be given. There exist $\lambda_{0},\mu_{0}$ and $C$ such that for any $\lambda\geq \lambda_{0},\mu\geq \mu_{0}$ and any solution $y$ of (\ref{1}),
it holds that
\begin{eqnarray}\label{7}
\begin{array}{l}
\begin{array}{llll}
~~~\displaystyle E\int_{Q}(\lambda\widehat{\varphi}
\widehat{\theta}^{2}|y_{xxx}|^{2}+\lambda^{3}\widehat{\varphi}^{3}\widehat{\theta}^{2}|y_{xx}|^{2}+\lambda^{5}\widehat{\varphi}^{5}\widehat{\theta}^{2}|y_{x}|^{2}+\lambda^{7}\widehat{\varphi}^{7}\widehat{\theta}^{2}|y|^{2})dxdt
\\\leq\displaystyle
C\Big[E\int_{Q^{I_{0}}}(\lambda\widehat{\varphi}
\widehat{\theta}^{2}|y_{xxx}|^{2}+\lambda^{7}\widehat{\varphi}^{7}\widehat{\theta}^{2}|y|^{2})dxdt
\\~~~~~~~~~~+\displaystyle E\int_{Q}(\lambda^{4}\widehat{\varphi}^{4}\widehat{\theta}^{2}|g|^{2}
+\lambda^{2}\widehat{\varphi}^{2}\widehat{\theta}^{2}|g_{x}|^{2}+\widehat{\theta}^{2}|g_{xx}|^{2}+\widehat{\theta}^{2}|f|^{2})dxdt\Big].
\end{array}
\end{array}
\end{eqnarray}
\end{theorem}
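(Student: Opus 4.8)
The plan is to use the conjugated-operator (Carleman weight) method, adapted to the fourth-order spatial operator and carried out with It\^o's formula so as to control the extra stochastic terms. First I would set $z=\widehat\theta y$ and, since $\widehat\theta=e^{\widehat l}$ is deterministic and of bounded variation in $t$, apply It\^o's formula to $z$ using $y=\widehat\theta^{-1}z$ to obtain the stochastic equation satisfied by $z$. Expanding $\widehat\theta\,\partial_x^4(\widehat\theta^{-1}z)$ yields $z_{xxxx}$ together with lower-order terms whose coefficients are polynomials in $\widehat l_x,\widehat l_{xx},\widehat l_{xxx},\widehat l_{xxxx}$; because $\widehat l=\lambda\widehat a$ with $\widehat a_x=\mu\widehat\varphi\widehat\psi_x$, the dominant coefficient multiplying $z$ is of order $\widehat l_x^4\sim\lambda^4\mu^4\widehat\varphi^4$, the one multiplying $z_{xx}$ is of order $\widehat l_x^2\sim\lambda^2\mu^2\widehat\varphi^2$, and so on. I would then write the equation for $z$ as $P_1z+P_2z=(\text{right-hand side})$, splitting the operator into a formally self-adjoint part $P_1$ and a formally skew-adjoint part $P_2$, assigning the $i\,dz$ term and the terms with an odd total number of $x$-derivatives falling on $\widehat l$ to $P_2$, and the rest to $P_1$.

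Next I would form the (real part of the) duality pairing of $P_1z$ with $P_2z$, integrate over $Q$, and take expectation. The two essential mechanisms are: (i) It\^o's formula applied to the bracket produced by the $i\,dz$ term, whose martingale part has vanishing expectation while its quadratic-variation correction is $\sim|\widehat\theta g|^2\,dt$ and, after the spatial integrations by parts, generates precisely the terms $\lambda^4\widehat\varphi^4\widehat\theta^2|g|^2+\lambda^2\widehat\varphi^2\widehat\theta^2|g_x|^2+\widehat\theta^2|g_{xx}|^2$ on the right of (\ref{7}); and (ii) repeated integration by parts in $x$ of the cross term $\mathrm{Re}\,(P_1z,P_2z)$, which after collecting the leading contributions produces a positive combination of the form $\lambda\widehat\varphi|z_{xxx}|^2+\lambda^3\widehat\varphi^3|z_{xx}|^2+\lambda^5\widehat\varphi^5|z_x|^2+\lambda^7\widehat\varphi^7|z|^2$. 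The graded powers $1,3,5,7$ are exactly what the fourth-order scaling forces, since each lost derivative costs a factor $\lambda^2\widehat\varphi^2$.

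I would then dispose of the boundary and temporal contributions and absorb the lower-order terms. The temporal endpoint terms vanish because $\widehat\theta\to0$ as $t\to0^+$ and $t\to T^-$. The spatial boundary contributions at $x=0,1$ are simplified by $z=z_x=0$ there (inherited from $y=y_x=0$), and the surviving ones, carrying factors $\widehat\varphi\widehat\psi_x$, are handled with the sign hypotheses $\widehat\psi_x(0)>0$, $\widehat\psi_x(1)<0$ so that they have the correct sign and may be discarded. Choosing $\mu$ and then $\lambda$ large allows every term with a strictly lower power of $\lambda\widehat\varphi$ (including the $f$ term, which yields $\widehat\theta^2|f|^2$) to be absorbed into the leading positive combination; returning from $z$ to $y=\widehat\theta^{-1}z$ recovers the left-hand side of (\ref{7}), the conjugation corrections being again absorbed for large parameters.

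Finally, the lower bound on $|\widehat\psi_x|$ holds only on $\overline I\setminus I_0$, so on $I_0$ the weight cannot control the interior terms; there I would insert a cutoff $\chi$ equal to $1$ near the set where $\widehat\psi_x$ may vanish and supported in $Q^{I_0}$, thereby moving the uncontrolled interior mass to the observation region and producing the right-hand term $E\int_{Q^{I_0}}(\lambda\widehat\varphi\widehat\theta^2|y_{xxx}|^2+\lambda^7\widehat\varphi^7\widehat\theta^2|y|^2)\,dx\,dt$; the $y_{xxx}$ observation is needed because the top-order derivative is not otherwise recoverable from $y$ alone for a fourth-order operator. I expect the main obstacle to be the second step: organizing the very long integration by parts for $\partial_x^4$ so that the cross term's leading part is genuinely positive after all the stochastic It\^o corrections in $g,g_x,g_{xx}$ have been subtracted, and verifying that each such correction carries exactly the power of $\lambda\widehat\varphi$ appearing on the right of (\ref{7}) --- this is where the requirement $g\in X_3$ (so that $g_{xx}$ is well defined with the right boundary behaviour) and careful bookkeeping of powers are indispensable.
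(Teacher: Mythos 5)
Your proposal follows essentially the same route as the paper: conjugate by the weight ($u=\widehat\theta y$), split the conjugated operator into two parts and expand the cross term by repeated integration by parts (the paper packages this as the pointwise identity of Theorem \ref{T2} and Corollary \ref{C3}), use the It\^o quadratic-variation corrections to generate the $\lambda^{4}\widehat\varphi^{4}|g|^{2}+\lambda^{2}\widehat\varphi^{2}|g_{x}|^{2}+|g_{xx}|^{2}$ terms, kill the temporal endpoints via $\widehat\theta\to0$, control the spatial boundary terms with the sign conditions on $\widehat\psi_{x}(0),\widehat\psi_{x}(1)$, and localize to $Q^{I_{0}}$ where $\widehat\psi_{x}$ may vanish (the paper simply splits the domain integral and adds the local terms to both sides rather than using a cutoff, but this is immaterial). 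The one step you elide is that the localization naturally leaves local observation terms in \emph{all four} quantities $|y|^{2},|y_{x}|^{2},|y_{xx}|^{2},|y_{xxx}|^{2}$; the paper's Step~2 removes the intermediate ones via the interpolation inequality $\int_{I_{0}}|(\widehat\theta y)_{x}|^{2}\leq\varepsilon\int_{I_{0}}|(\widehat\theta y)_{xx}|^{2}+\frac{C}{\varepsilon}\int_{I_{0}}|\widehat\theta y|^{2}$ with $\varepsilon\sim\lambda^{-2}t^{2}(T-t)^{2}$, applied twice, to arrive at the stated right-hand side containing only $|y_{xxx}|^{2}$ and $|y|^{2}$; you should include this (routine but necessary) reduction.
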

\par
We give some applications of Theorem \ref{T1}.
First, we can obtain the following observability inequality.
\begin{corollary}\label{C4}
Let $y_{0}\in X_{3},f,g\in L_{\mathcal{F}}^{2}(0,T;X_{3})$ and $y$ be the solution of
\begin{eqnarray}\label{1.49}
\begin{array}{l}
\left\{
\begin{array}{lll}idy+y_{xxxx}dt=(ay+f)dt+(by+g)dw\\
y(0,t)=0= y(1,t)\\
y_{x}(0,t)=0=y_{x}(1,t)\\
y(x,0)=y_{0}(x)
\end{array}
\right.
\end{array}
\begin{array}{lll}\textrm{in}~Q,
\\\textrm{in}~(0,T),\\\textrm{in}~(0,T),\\\textrm{in}~I,
\end{array}
\end{eqnarray} we have
\begin{eqnarray}\label{1.40}
\begin{array}{l}
\begin{array}{llll}
\displaystyle\|y_{0}\|_{X_{3}}
\leq C\Big[\|y\|_{L_{\mathcal{F}}^{2}(0,T;L^{2}(I_{0}))}+\|y_{xxx}\|_{L_{\mathcal{F}}^{2}(0,T;L^{2}(I_{0}))} \\~~~~~~~~~~~~+\|f\|_{L_{\mathcal{F}}^{2}(0,T;X_{3})}+\|g\|_{L_{\mathcal{F}}^{2}(0,T;X_{3})}\Big],
\end{array}
\end{array}
\end{eqnarray}
where $C=C(a,b,T).$
\end{corollary}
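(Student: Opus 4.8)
The plan is to derive \eqref{1.40} by combining the Carleman estimate of Theorem \ref{T1} with a backward-in-time energy estimate for \eqref{1.49}. First I would regard \eqref{1.49} as an instance of \eqref{1} in which the right-hand sides $f$ and $g$ are replaced by $ay+f$ and $by+g$, and apply Theorem \ref{T1}. On the right-hand side of \eqref{7} this generates the terms $\lambda^4\widehat{\varphi}^4\widehat{\theta}^2|by+g|^2$, $\lambda^2\widehat{\varphi}^2\widehat{\theta}^2|(by+g)_x|^2$, $\widehat{\theta}^2|(by+g)_{xx}|^2$ and $\widehat{\theta}^2|ay+f|^2$. Expanding the $x$-derivatives by the Leibniz rule and using (H6) (so that $a,b$ and their first two $x$-derivatives are bounded), each term splits into a genuine source contribution involving only $f,g,g_x,g_{xx}$ and a potential contribution involving $|y|^2,|y_x|^2,|y_{xx}|^2$ weighted respectively by $\lambda^4\widehat{\varphi}^4\widehat{\theta}^2$, $\lambda^2\widehat{\varphi}^2\widehat{\theta}^2$ and $\widehat{\theta}^2$. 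Since $\widehat{\varphi}$ is bounded below by a positive constant on $Q$, these potential contributions are dominated, for $\lambda$ large, by the left-hand terms $\lambda^7\widehat{\varphi}^7\widehat{\theta}^2|y|^2$, $\lambda^5\widehat{\varphi}^5\widehat{\theta}^2|y_x|^2$ and $\lambda^3\widehat{\varphi}^3\widehat{\theta}^2|y_{xx}|^2$ of \eqref{7}, so they may be absorbed there at the cost of a constant depending on $a,b$. I then fix $\lambda$ and $\mu$ at values large enough for both Theorem \ref{T1} and this absorption.

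With $\lambda,\mu$ fixed, all the Carleman weights become bounded functions on $\overline{Q}$: each weight $\widehat{\varphi}^k\widehat{\theta}^2$ is bounded because the exponential decay of $\widehat{\theta}^2=e^{2\lambda\widehat{a}}$ as $t\to 0^+,T^-$ dominates the polynomial blow-up of $\widehat{\varphi}^k$. Hence the observation terms on the right are bounded by $C(\|y\|_{L^2_{\mathcal{F}}(0,T;L^2(I_0))}^2+\|y_{xxx}\|_{L^2_{\mathcal{F}}(0,T;L^2(I_0))}^2)$ and the remaining sources by $C(\|f\|_{L^2_{\mathcal{F}}(0,T;X_3)}^2+\|g\|_{L^2_{\mathcal{F}}(0,T;X_3)}^2)$. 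On the left I would restrict the time integral to $[T/4,3T/4]$, where $t(T-t)$ is bounded away from $0$, so every weight $\lambda^k\widehat{\varphi}^k\widehat{\theta}^2$ is bounded below by a positive constant; dropping the weights and recalling that $\|\cdot\|_{X_3}$ is the $H^3(I)$ norm yields
$$E\int_{T/4}^{3T/4}\|y(t)\|_{X_3}^2\,dt\leq C\Big(\|y\|_{L^2_{\mathcal{F}}(0,T;L^2(I_0))}^2+\|y_{xxx}\|_{L^2_{\mathcal{F}}(0,T;L^2(I_0))}^2+\|f\|_{L^2_{\mathcal{F}}(0,T;X_3)}^2+\|g\|_{L^2_{\mathcal{F}}(0,T;X_3)}^2\Big).$$

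It remains to bound $\|y_0\|_{X_3}$ by this mid-interval energy plus the sources, and this backward energy estimate is the main obstacle. I would obtain it by applying It\^o's formula to $\|y(t)\|_{X_3}^2$ (equivalently to $\|y\|_{L^2(I)}^2+\|y_{xxx}\|_{L^2(I)}^2$), that is, by differentiating \eqref{1.49} three times in $x$ and running energy estimates. After integration by parts the leading term $iy_{xxxx}\,dt$ contributes a purely imaginary quantity, so it drops out of the real energy identity, while the potentials $a,b$ (bounded in $W^{4,\infty}(I)$) and the It\^o correction $E\int_I|(by+g)_{xxx}|^2dx$ produce only terms controlled by $E\|y(t)\|_{X_3}^2$ and $E(\|f\|_{X_3}^2+\|g\|_{X_3}^2)$. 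The delicate points are that the boundary conditions $y=y_x=0$ must be used to discard the boundary terms created by the higher-order integrations by parts, and that the sign of the It\^o correction is exactly what is needed in the backward direction. A Gr\"onwall argument from a time $s\in[T/4,3T/4]$ back to $t=0$ then gives $\|y_0\|_{X_3}^2\leq C(E\|y(s)\|_{X_3}^2+\|f\|_{L^2_{\mathcal{F}}(0,T;X_3)}^2+\|g\|_{L^2_{\mathcal{F}}(0,T;X_3)}^2)$ with $C=C(a,b,T)$; integrating in $s$ over $[T/4,3T/4]$ and inserting the displayed bound, finally taking square roots, yields \eqref{1.40}.
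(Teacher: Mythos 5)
Your treatment of the Carleman half of the argument is exactly the paper's: apply Theorem \ref{T1} with $f,g$ replaced by $ay+f,by+g$, absorb the potential terms for $\lambda\geq C(a,b,T)$, bound the weights above on $\overline{Q}$ and below on $I\times[T/4,3T/4]$, and conclude an estimate for $E\int_{T/4}^{3T/4}\|y(t)\|_{X_3}^2\,dt$. The paper then finishes by simply invoking the backward energy estimate (\ref{1.39}) with $s=3$ from Proposition \ref{P1} iii), whereas you propose to re-derive that estimate from scratch, and this is where your argument has a genuine gap.

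Your route to the backward energy inequality --- apply It\^o's formula to $\|y\|_{L^2(I)}^2+\|y_{xxx}\|_{L^2(I)}^2$ by differentiating the equation three times in $x$ and integrating by parts --- does not close. The leading contribution to $d\|y_{xxx}\|_{L^2(I)}^2$ is $i\int_I(\overline{y}_{xxx}\partial_x^7y-y_{xxx}\partial_x^7\overline{y})\,dx\,dt$; after integration by parts the interior term is indeed purely imaginary and cancels, but the boundary terms $\big[\overline{y}_{xxx}\partial_x^6y-y_{xxx}\partial_x^6\overline{y}\big]_0^1$ and $\big[\partial_x^4\overline{y}\,\partial_x^5y-\partial_x^4y\,\partial_x^5\overline{y}\big]_0^1$ survive, and the boundary conditions $y=y_x=0$ at $x=0,1$ give no control whatsoever on $y_{xxx},\partial_x^4y,\partial_x^5y,\partial_x^6y$ there. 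So the claim that the boundary conditions ``discard the boundary terms created by the higher-order integrations by parts'' is false at the $H^3$ level, and the real energy identity you need does not follow. This is precisely why the paper does not prove the $s=3$ estimate directly: it proves the cases $s=0$ and $s=4$ at the Galerkin level, where the eigenfunction expansion of $\Lambda$ (with domain $H^2_0(I)\cap H^4(I)$) makes the fourth-order term drop out algebraically with no integration by parts against uncontrolled boundary data, and then obtains $s=3$ by interpolation (Lemma \ref{L2}). To repair your proof you should replace your direct It\^o computation by a citation of (\ref{1.39}) with $s=3$, or reproduce the interpolation argument; as written, that step would fail.
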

\begin{remark} Note that the observability inequality (\ref{1.40}) can be applied to the state observation problems for semilinear
stochastic fourth order Schr\"{o}dinger equation. This is similar to Section 6 in \cite{0}.
\end{remark}
Also, we can obtain the following unique continuation property.
\begin{corollary}\label{C1}
Let $y_{0}\in X_{3},$ $y$ is the solution of
\begin{eqnarray}\label{1.7}
\begin{array}{l}
\left\{
\begin{array}{lll}idy+y_{xxxx}dt=aydt+bydw\\
y(0,t)=0=y(1,t)\\
y_{x}(0,t)=0=y_{x}(1,t)\\
y(x,0)=y_{0}(x)
\end{array}
\right.
\end{array}
\begin{array}{lll}\textrm{in}~Q,
\\\textrm{in}~(0,T),\\\textrm{in}~(0,T),\\\textrm{in}~I.
\end{array}
\end{eqnarray}
If $$y\equiv0~~ {\rm{in}}~~ Q^{I_{0}},~P-{\rm{a.s.}},$$
we have $y\equiv0 ~{\rm{in}}~ Q,$ $P$-a.s.
\end{corollary}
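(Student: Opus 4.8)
The plan is to read this off directly from the observability inequality of Corollary \ref{C4}, so that no new analysis is required beyond recognizing the correct special case. First I would note that the system (\ref{1.7}) is precisely (\ref{1.49}) with the sources switched off, i.e. with $f\equiv 0$ and $g\equiv 0$. Consequently the solution $y$ of (\ref{1.7}) is the solution of (\ref{1.49}) for these vanishing data, and the observability estimate (\ref{1.40}) applies to it with a constant $C=C(a,b,T)$.

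Next I would exploit the standing hypothesis $y\equiv 0$ in $Q^{I_0}$, $P$-a.s. Since $Q^{I_0}=I_0\times(0,T)$ is open and $y$ vanishes identically there, every spatial derivative of $y$ also vanishes on $Q^{I_0}$; in particular both $y=0$ and $y_{xxx}=0$ on $Q^{I_0}$, so that
\begin{eqnarray*}
\|y\|_{L_{\mathcal{F}}^{2}(0,T;L^{2}(I_{0}))}=\|y_{xxx}\|_{L_{\mathcal{F}}^{2}(0,T;L^{2}(I_{0}))}=0.
\end{eqnarray*}
Together with $f\equiv 0$ and $g\equiv 0$, this makes the entire right-hand side of (\ref{1.40}) vanish, and therefore $\|y_{0}\|_{X_{3}}=0$, i.e. $y_{0}=0$.

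Finally I would invoke the uniqueness of solutions for the linear forward stochastic fourth order Schr\"{o}dinger equation. Because (\ref{1.7}) is homogeneous in $y$ with bounded adapted coefficients $a,b$, the Cauchy problem with zero initial datum $y_{0}=0$ and the prescribed homogeneous boundary conditions admits only the trivial solution. Hence $y\equiv 0$ in $Q$, $P$-a.s., which is the desired conclusion.

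I do not expect a genuine obstacle in this argument: the entire analytic difficulty is already absorbed into the Carleman estimate of Theorem \ref{T1} and its corollary (\ref{1.40}). The only points needing mild care are that the interior vanishing of $y$ on the open set $Q^{I_0}$ simultaneously annihilates both the $\|y\|_{L^{2}(I_{0})}$ and $\|y_{xxx}\|_{L^{2}(I_{0})}$ observation terms, and that the passage from $y_{0}=0$ to $y\equiv 0$ relies only on the linearity of the equation and the boundedness of $a$ and $b$.
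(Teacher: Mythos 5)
Your argument is exactly the paper's proof: set $f=g=0$ in the observability inequality (\ref{1.40}) of Corollary \ref{C4}, observe that the vanishing of $y$ on the open set $Q^{I_0}$ kills both observation terms so that $\|y_0\|_{X_3}=0$, and conclude $y\equiv 0$ by uniqueness from Proposition \ref{P1}. Your write-up merely spells out the two steps the paper leaves implicit; nothing is missing.
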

\begin{remark} The classical Holmgren Uniqueness Theorem does not work for stochastic PDEs.
\end{remark}

Next, we establish another type of global Carleman estimate.
\begin{theorem}\label{T3}
Let $y_{0}\in X_{3}$ and $f,g\in L^{2}_{\mathcal{F}}(0,T;X_{3})$ be given. There exist $\lambda_{0},\mu_{0}$ and $C$ such that for any $\lambda\geq \lambda_{0},\mu\geq \mu_{0}$ and any solution $y$ of (\ref{1}),
it holds that
\begin{eqnarray}\label{1.21}
\begin{array}{l}
\begin{array}{llll}
~~~\displaystyle E\int_{Q}(\lambda\widetilde{\varphi}
\widetilde{\theta}^{2}|y_{xxx}|^{2}+\lambda^{3}\widetilde{\varphi}^{3}\widetilde{\theta}^{2}|y_{xx}|^{2}+\lambda^{5}\widetilde{\varphi}^{5}\widetilde{\theta}^{2}|y_{x}|^{2}+\lambda^{7}\widetilde{\varphi}^{7}\widetilde{\theta}^{2}|y|^{2})dxdt
\\\leq\displaystyle
C\Big[E\int_{0}^{T}(\lambda\widetilde{\varphi}(0,t)
\widetilde{\theta}^{2}(0,t)|y_{xxx}(0,t)|^{2}+\lambda^{3}\widetilde{\varphi}^{3}(0,t)\widetilde{\theta}^{2}(0,t)|y_{xx}(0,t)|^{2}
)dt
\\~~~~~~~~~~+\displaystyle E\int_{Q}(\lambda^{4}\widetilde{\varphi}^{4}\widetilde{\theta}^{2}|g|^{2}
+\lambda^{2}\widetilde{\varphi}^{2}\widetilde{\theta}^{2}|g_{x}|^{2}+\widetilde{\theta}^{2}|g_{xx}|^{2}+\widetilde{\theta}^{2}|f|^{2})dxdt\Big].
\end{array}
\end{array}
\end{eqnarray}
\end{theorem}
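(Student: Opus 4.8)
The plan is to follow the same derivation that yields Theorem \ref{T1}, exploiting that the two weights differ only in their spatial profiles: $\widehat{a}$ and $\widetilde{a}$ carry the identical temporal factor $\frac{1}{t(T-t)}$, and $\widehat{\varphi},\widetilde{\varphi}$ and $\widehat{\theta},\widetilde{\theta}$ are built in the same way. Concretely, I would set $z=\widetilde{\theta}y$ and, applying It\^o's formula to $d(\widetilde{\theta}y)$ (note that $\widetilde{\theta}$ is deterministic) together with the equation (\ref{1}) written as $dy=iy_{xxxx}\,dt-if\,dt-ig\,dw$, derive a pointwise weighted identity for $z$. The spatial fourth-order operator is handled by the usual conjugation/operator-splitting: one expands $\widetilde{\theta}\,\partial_x^4(\widetilde{\theta}^{-1}z)$ into lower-order pieces weighted by powers of $\widetilde{l}_x=\lambda\mu\widetilde{\varphi}\widetilde{\psi}_x$, multiplies by suitable multipliers, takes real parts, and collects everything into a divergence-in-$x$ part, a temporal part, a nonnegative leading quadratic form in $(z,z_x,z_{xx},z_{xxx})$, and lower-order remainders. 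The It\^o correction coming from the diffusion $g$ produces exactly the quadratic expressions in $g,g_x,g_{xx}$ on the right-hand side of (\ref{1.21}). This is the same identity underlying Theorem \ref{T1}, now written with $\widetilde{(\cdot)}$ throughout.

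Second, I would integrate this identity over $Q$ and take expectation. The temporal part integrates to $t=0,T$, where $\widetilde{\theta}$ and all its products with powers of $\widetilde{\varphi}$ vanish, so those contributions drop out; after choosing $\mu$ and then $\lambda$ large, the leading quadratic form dominates the four weighted norms on the left-hand side of (\ref{1.21}). The decisive structural point is that $\widetilde{\psi}_x(x)=2(x-x_0)$ satisfies $|\widetilde{\psi}_x|\geq c>0$ on all of $\overline{I}$ because $x_0>1$; hence, unlike $\widehat{\psi}$, the profile $\widetilde{\psi}$ has \emph{no} critical point in $\overline{I}$, the lower bound for the quadratic form holds uniformly across the whole interval, and no interior observation over $Q^{I_0}$ is needed. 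This is precisely why the $Q^{I_0}$ term of (\ref{7}) is absent from (\ref{1.21}).

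Third, I would carry out the boundary analysis at $x=0$ and $x=1$, which is where this estimate genuinely departs from Theorem \ref{T1}. The divergence-in-$x$ part produces boundary integrals over $(0,T)$ at the two endpoints. Since $y(0,t)=y(1,t)=0$ and $y_x(0,t)=y_x(1,t)=0$, the conjugated variable obeys $z=z_x=0$ there, so every boundary contribution carrying a factor of $z$ or $z_x$ vanishes and only terms in $y_{xx},y_{xxx}$ (equivalently $z_{xx},z_{xxx}$) survive, with coefficients proportional to powers of $\widetilde{l}_x$, hence to $\partial_\nu\widetilde{\psi}$. Because $x_0>1$, one has $\partial_\nu\widetilde{\psi}=-\widetilde{\psi}_x(0)=2x_0>0$ at $x=0$ and $\partial_\nu\widetilde{\psi}=\widetilde{\psi}_x(1)=2(1-x_0)<0$ at $x=1$. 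The endpoint $x=1$, with $\partial_\nu\widetilde{\psi}<0$, contributes boundary terms of the favorable sign, which may be moved to the left-hand side and discarded; the endpoint $x=0$, with $\partial_\nu\widetilde{\psi}>0$, contributes terms that must be retained on the right, and a matching of powers shows they are exactly $\lambda\widetilde{\varphi}(0,t)\widetilde{\theta}^{2}(0,t)|y_{xxx}(0,t)|^{2}+\lambda^{3}\widetilde{\varphi}^{3}(0,t)\widetilde{\theta}^{2}(0,t)|y_{xx}(0,t)|^{2}$.

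I expect the principal obstacle to be the precise bookkeeping of these boundary terms: one must confirm that after all integrations by parts the only surviving endpoint contributions are the two retained terms with exactly the weights $\lambda\widetilde{\varphi}$ and $\lambda^{3}\widetilde{\varphi}^{3}$, and that the $x=1$ contributions are genuinely sign-definite for $\lambda,\mu$ large rather than merely of lower order. A secondary, more routine difficulty is absorbing every lower-order bulk remainder --- including the cross terms generated by $\widetilde{l}_x,\widetilde{l}_{xx},\widetilde{l}_{xxx},\widetilde{l}_{xxxx}$ --- into the four leading weighted norms; this is settled by the usual two-parameter argument, first fixing $\mu$ large to dominate the $\mu$-polynomial coefficients produced by $\widetilde{\psi}$, then $\lambda$ large to dominate the remaining lower powers of $\lambda$, exactly as in the proof of Theorem \ref{T1}.
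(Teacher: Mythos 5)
Your proposal is correct and follows essentially the same route as the paper: conjugate with $\widetilde{\theta}$, apply the same weighted identity as in Theorem \ref{T1}, use that $\widetilde{\psi}_x=2(x-x_0)$ is bounded away from zero on all of $\overline{I}$ (so the leading quadratic form is uniformly coercive and no interior observation term arises), note that the boundary contribution at $x=1$ has the favorable sign since $\widetilde{\psi}_x(1)<0$ and can be dropped, and retain the $x=0$ contribution $\lambda^3\widetilde{\varphi}^3(0,t)|\widetilde{u}_{xx}(0,t)|^2+\lambda\widetilde{\varphi}(0,t)|\widetilde{u}_{xxx}(0,t)|^2$ on the right. This is precisely the paper's argument (the terms $\widetilde{V}(1)\ge 0$ and $\widetilde{V}(0)$ moved to the right-hand side).
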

\begin{remark}
It follows from hidden regularity property (Proposition \ref{P3}) that $y_{xx}(0,\cdot),y_{xx}(1,\cdot),y_{xxx}(0,\cdot),$ $y_{xxx}(1,\cdot) \in L^{2}_{\mathcal{F}}(\Omega,L^{2}(0,T)),$ thus the right-hand side of (\ref{1.21}) makes sense.
\end{remark}
\par
Now, we give two applications of Theorem \ref{T3}.
\begin{corollary}\label{C5}
Let $y_{0}\in X_{3},f,g\in L_{\mathcal{F}}^{2}(0,T;X_{3})$ and $y$ be the solution of (\ref{1.49}), we have
\begin{eqnarray}\label{1.43}
\begin{array}{l}
\begin{array}{llll}
\displaystyle\|y_{0}\|_{X_{3}}
\leq C\Big[\|y_{xx}(0,\cdot)\|_{L^{2}_{\mathcal{F}}(\Omega,L^{2}(0,T))}+\|y_{xxx}(0,\cdot)\|_{L^{2}_{\mathcal{F}}(\Omega,L^{2}(0,T))} \\~~~~~~~~~~~~+\|f\|_{L_{\mathcal{F}}^{2}(0,T;X_{3})}+\|g\|_{L_{\mathcal{F}}^{2}(0,T;X_{3})}\Big],
\end{array}
\end{array}
\end{eqnarray}
where $C=C(a,b,T).$
\end{corollary}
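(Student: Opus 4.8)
The plan is to derive the boundary observability inequality (\ref{1.43}) from the boundary Carleman estimate of Theorem \ref{T3}, following the same scheme by which the interior observability (\ref{1.40}) is obtained from Theorem \ref{T1}, but retaining the traces at $x=0$ in place of the interior observation. First I would view the solution $y$ of (\ref{1.49}) as a solution of (\ref{1}) with right-hand sides $\widetilde{f}=ay+f$ and $\widetilde{g}=by+g$, and apply (\ref{1.21}). Expanding $\widetilde{\theta}^{2}|ay+f|^{2}$, $\lambda^{4}\widetilde{\varphi}^{4}\widetilde{\theta}^{2}|by+g|^{2}$, $\lambda^{2}\widetilde{\varphi}^{2}\widetilde{\theta}^{2}|(by+g)_{x}|^{2}$ and $\widetilde{\theta}^{2}|(by+g)_{xx}|^{2}$, and using $a,b\in L^{\infty}_{\mathcal{F}}(0,T;W^{4,\infty}(I))$, the terms generated by $a,b$ are, apart from the genuine source terms in $f,g$, the ``bad'' terms $C(a,b)E\int_{Q}\widetilde{\theta}^{2}\big(\lambda^{4}\widetilde{\varphi}^{4}|y|^{2}+\lambda^{2}\widetilde{\varphi}^{2}(|y|^{2}+|y_{x}|^{2})+|y|^{2}+|y_{x}|^{2}+|y_{xx}|^{2}\big)\,dxdt$.

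The next step is the absorption argument. Since $\widetilde{\psi}(x)=(x-x_{0})^{2}+\delta_{0}\geq\delta_{0}$ and $t(T-t)\leq T^{2}/4$, the weight $\widetilde{\varphi}$ is bounded below by a positive constant on $Q$; hence each bad term is dominated by $C\lambda^{-1}$ times one of the terms $\lambda^{7}\widetilde{\varphi}^{7}\widetilde{\theta}^{2}|y|^{2}$, $\lambda^{5}\widetilde{\varphi}^{5}\widetilde{\theta}^{2}|y_{x}|^{2}$ or $\lambda^{3}\widetilde{\varphi}^{3}\widetilde{\theta}^{2}|y_{xx}|^{2}$ on the left-hand side of (\ref{1.21}). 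After enlarging $\lambda_{0}$ so that it also depends on $a,b$, all bad terms are absorbed into the left-hand side, leaving
$$E\int_{Q}\big(\lambda\widetilde{\varphi}\widetilde{\theta}^{2}|y_{xxx}|^{2}+\lambda^{3}\widetilde{\varphi}^{3}\widetilde{\theta}^{2}|y_{xx}|^{2}+\lambda^{5}\widetilde{\varphi}^{5}\widetilde{\theta}^{2}|y_{x}|^{2}+\lambda^{7}\widetilde{\varphi}^{7}\widetilde{\theta}^{2}|y|^{2}\big)dxdt\leq C\Big[B+S\Big],$$
where $B$ denotes the boundary integral at $x=0$ appearing in (\ref{1.21}) and $S=E\int_{Q}\widetilde{\theta}^{2}(\lambda^{4}\widetilde{\varphi}^{4}|g|^{2}+\lambda^{2}\widetilde{\varphi}^{2}|g_{x}|^{2}+|g_{xx}|^{2}+|f|^{2})dxdt$.

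Now I would fix $\lambda,\mu$ and localize in time. On $[T/4,3T/4]$ the weights $\widetilde{\varphi},\widetilde{\theta}$ are bounded above and below by positive constants depending only on $\lambda,\mu,T$, so the left-hand side dominates $cE\int_{T/4}^{3T/4}\|y(t)\|_{X_{3}}^{2}dt$, while on the right $B$ and $S$ are controlled by the fixed weights; using $\int_{I}(|g|^{2}+|g_{x}|^{2}+|g_{xx}|^{2})dx\leq\|g\|_{X_{3}}^{2}$ and the hidden regularity of Proposition \ref{P3} (which makes $y_{xx}(0,\cdot),y_{xxx}(0,\cdot)\in L^{2}_{\mathcal{F}}(\Omega,L^{2}(0,T))$ meaningful), this yields $E\int_{T/4}^{3T/4}\|y(t)\|_{X_{3}}^{2}dt\leq C[\,\|y_{xx}(0,\cdot)\|^{2}_{L^{2}_{\mathcal{F}}(\Omega,L^{2}(0,T))}+\|y_{xxx}(0,\cdot)\|^{2}_{L^{2}_{\mathcal{F}}(\Omega,L^{2}(0,T))}+\|f\|^{2}_{L^{2}_{\mathcal{F}}(0,T;X_{3})}+\|g\|^{2}_{L^{2}_{\mathcal{F}}(0,T;X_{3})}\,]$. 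By the mean value argument there is $t_{*}\in[T/4,3T/4]$ with $E\|y(t_{*})\|_{X_{3}}^{2}\leq\frac{2}{T}E\int_{T/4}^{3T/4}\|y(t)\|_{X_{3}}^{2}dt$, so the energy at $t_{*}$ is bounded by the same right-hand side.

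The final and main analytic ingredient is a backward-in-time energy estimate for (\ref{1.49}). Applying the It\^o formula to $\|y(t)\|_{X_{3}}^{2}$, bounding the drift contribution of $ay+f$ and the It\^o correction $\|by+g\|_{X_{3}}^{2}$ by means of $a,b\in W^{4,\infty}(I)$, one obtains $\big|\tfrac{d}{dt}E\|y(t)\|_{X_{3}}^{2}\big|\leq CE\|y(t)\|_{X_{3}}^{2}+CE(\|f\|_{X_{3}}^{2}+\|g\|_{X_{3}}^{2})$, and Gr\"onwall's inequality on $[0,t_{*}]$ gives $\|y_{0}\|_{X_{3}}^{2}=E\|y_{0}\|_{X_{3}}^{2}\leq C(a,b,T)\big(E\|y(t_{*})\|_{X_{3}}^{2}+\|f\|^{2}_{L^{2}_{\mathcal{F}}(0,T;X_{3})}+\|g\|^{2}_{L^{2}_{\mathcal{F}}(0,T;X_{3})}\big)$. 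Combining this with the bound on $E\|y(t_{*})\|_{X_{3}}^{2}$ and taking square roots yields (\ref{1.43}). The step I expect to be most delicate is precisely this $X_{3}$-energy estimate: it requires differentiating the equation up to third order, controlling the boundary contributions from the principal part $y_{xxxx}$ under the conditions $y(0,t)=y(1,t)=y_{x}(0,t)=y_{x}(1,t)=0$, and handling the It\^o correction term; by contrast the Carleman input is immediate from Theorem \ref{T3}, and the absorption is routine once the positive lower bound on $\widetilde{\varphi}$ is exploited.
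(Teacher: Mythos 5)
Your proposal is correct and follows essentially the same route as the paper: apply Theorem \ref{T3} to (\ref{1.49}) with right-hand sides $ay+f$ and $by+g$, absorb the zeroth-order contributions by taking $\lambda$ large, localize in time to $[T/4,3T/4]$ where the weights are bounded above and below, and conclude with the backward-in-time energy estimate. The only (cosmetic) difference is at the last step: the $X_{3}$-energy inequality you propose to re-derive via It\^o's formula is already available as (\ref{1.39}) with $s=3$ in Proposition \ref{P1} iii), which the paper simply integrates over $\tau\in[T/4,3T/4]$ instead of selecting a mean-value point $t_{*}$.
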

\begin{corollary}\label{C2}
Let $y_{0}\in X_{3}$ and $y$ be the solution of (\ref{1.7}). If $$y_{xx}(0,t)=y_{xxx}(0,t)\equiv0~~ {\rm{in}}~~ (0,T),~P-{\rm{a.s.}},$$
we have $y\equiv0 ~{\rm{in}}~ Q,~P-{\rm{a.s.}}$
\end{corollary}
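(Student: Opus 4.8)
The plan is to derive Corollary \ref{C2} as an immediate consequence of the boundary observability inequality in Corollary \ref{C5}, exactly paralleling the way Corollary \ref{C1} follows from the internal observability of Corollary \ref{C4}. The first step is to observe that the homogeneous system (\ref{1.7}) is precisely the controlled system (\ref{1.49}) in the special case $f\equiv 0$ and $g\equiv 0$. Since $y_{0}\in X_{3}$, Corollary \ref{C5} applies verbatim to the solution $y$ of (\ref{1.7}), and with the source terms set to zero the inequality (\ref{1.43}) collapses to
\begin{eqnarray*}
\|y_{0}\|_{X_{3}}\leq C\Big[\|y_{xx}(0,\cdot)\|_{L^{2}_{\mathcal{F}}(\Omega,L^{2}(0,T))}+\|y_{xxx}(0,\cdot)\|_{L^{2}_{\mathcal{F}}(\Omega,L^{2}(0,T))}\Big],
\end{eqnarray*}
with $C=C(a,b,T)$. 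Here the two boundary traces on the right-hand side are well defined thanks to the hidden regularity property (Proposition \ref{P3}), so the inequality carries genuine content.

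The second step is to invoke the hypothesis. By assumption $y_{xx}(0,t)=y_{xxx}(0,t)\equiv 0$ in $(0,T)$, $P$-a.s., so both terms on the right-hand side of the displayed estimate vanish. Consequently $\|y_{0}\|_{X_{3}}=0$, which forces $y_{0}=0$ in $X_{3}$.

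The final step is to upgrade $y_{0}=0$ to $y\equiv 0$ throughout $Q$. Since (\ref{1.7}) is a linear homogeneous equation in $y$ (the coefficients $a,b\in L^{\infty}_{\mathcal{F}}(0,T;W^{4,\infty}(I))$ by (H6)), the well-posedness and uniqueness of its solution with the zero initial datum $y_{0}=0$ yield $y\equiv 0$ in $Q$, $P$-a.s. I do not expect any real obstacle here: all of the analytic difficulty has already been absorbed into Theorem \ref{T3} and its corollary, and the present argument is the standard "observability implies uniqueness" reduction. The only points requiring care are the verification that the boundary traces make sense (handled by Proposition \ref{P3}) and the appeal to uniqueness of the linear SPDE, both of which are routine given the earlier results.
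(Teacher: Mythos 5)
Your proposal is correct and follows exactly the paper's own argument: the paper proves Corollary \ref{C2} by taking $f=g=0$ in (\ref{1.43}), using the vanishing boundary traces to conclude $\|y_{0}\|_{X_{3}}\leq 0$, hence $y_{0}\equiv 0$, and then invoking uniqueness of the homogeneous linear equation to deduce $y\equiv 0$ in $Q$, $P$-a.s.
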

\subsection{Carleman estimate for a backward
stochastic fourth order Schr\"{o}dinger equation and its applications}
\par
In this section, we first consider the backward stochastic fourth order Schr\"{o}dinger equation
\begin{eqnarray}\label{15}
\begin{array}{l}
\left\{
\begin{array}{lll}
idz+z_{xxxx}dt=hdt+Zdw\\
z(0,t)=0=z(1,t)\\
z_{x}(0,t)=0=z_{x}(1,t)\\
z(x,T)=z_{T}(x)
\end{array}
\right.
\end{array}
\begin{array}{lll}\textrm{in}~Q,
\\\textrm{in}~(0,T),\\\textrm{in}~(0,T),\\\textrm{in}~I.
\end{array}
\end{eqnarray}
By the same method in Proof of Theorem \ref{T3}, we can obtain
\begin{theorem}\label{T6}
Let $z_{T}\in L^{2}\mathcal (\Omega,\mathcal
{F}_{T},P; X_{3})$ and $h \in L^{2}_{\mathcal{F}}(0,T;X_{3})$ be given. There exist $\lambda_{0},\mu_{0}$ and $C$ such that for any $\lambda\geq \lambda_{0},\mu\geq \mu_{0}$ and any solution $(z,Z)$ of (\ref{15}),
it holds that
\begin{eqnarray*}
\begin{array}{l}
\begin{array}{llll}
~~~\displaystyle E\int_{Q}(\lambda\widetilde{\varphi}
\widetilde{\theta}^{2}|z_{xxx}|^{2}+\lambda^{3}\widetilde{\varphi}^{3}\widetilde{\theta}^{2}|z_{xx}|^{2}+\lambda^{5}\widetilde{\varphi}^{5}\widetilde{\theta}^{2}|z_{x}|^{2}+\lambda^{7}\widetilde{\varphi}^{7}\widetilde{\theta}^{2}|z|^{2})dxdt
\\\leq\displaystyle
C\Big[E\int_{0}^{T}(\lambda\widetilde{\varphi}(0,t)
\widetilde{\theta}^{2}(0,t)|z_{xxx}(0,t)|^{2}+\lambda^{3}\widetilde{\varphi}^{3}(0,t)\widetilde{\theta}^{2}(0,t)|z_{xx}(0,t)|^{2}
)dt
\\~~~~~~~~~~+\displaystyle E\int_{Q}(\lambda^{4}\widetilde{\varphi}^{4}\widetilde{\theta}^{2}|Z|^{2}
+\lambda^{2}\widetilde{\varphi}^{2}\widetilde{\theta}^{2}|Z_{x}|^{2}+\widetilde{\theta}^{2}|Z_{xx}|^{2}+\widetilde{\theta}^{2}|h|^{2})dxdt\Big].
\end{array}
\end{array}
\end{eqnarray*}
\end{theorem}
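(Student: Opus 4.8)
The plan is to follow verbatim the argument used for Theorem \ref{T3}, since the differential operator $i\partial_t+\partial_x^4$, the Carleman weight $\widetilde{\theta}=e^{\widetilde{l}}$, and the homogeneous boundary conditions $z=z_x=0$ at $x=0,1$ are identical; the only genuine change is that the forward Itô source $g\,dw$ is replaced by the backward diffusion $Z\,dw$ belonging to the unknown pair $(z,Z)$. First I would rewrite (\ref{15}) in differential form as $dz=i(z_{xxxx}-h)\,dt-iZ\,dw$, introduce the weighted variable $v=\widetilde{\theta}z$, and derive the pointwise weighted identity for $v$ obtained by multiplying the transformed equation by a suitable first-order expression in $\overline{v}$ and its $x$-derivatives and taking real parts. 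Because this identity depends only on the operator and on the weight, it coincides with the one established in the proof of Theorem \ref{T3}.

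The essential difference lies in the stochastic calculus. Applying It\^o's formula to the relevant quadratic quantities (such as $\widetilde{\theta}^{2}|z|^{2}$, $\widetilde{\theta}^{2}|z_{x}|^{2}$, $\widetilde{\theta}^{2}|z_{xx}|^{2}$), the quadratic variation $d\langle z\rangle=|Z|^{2}\,dt$ generates correction terms in $|Z|^{2}$, $|Z_{x}|^{2}$ and $|Z_{xx}|^{2}$ instead of the $|g|^{2}$-type terms of the forward case; tracking the powers of $\lambda\widetilde{\varphi}$ carried along produces exactly the right-hand side weights $\lambda^{4}\widetilde{\varphi}^{4}$, $\lambda^{2}\widetilde{\varphi}^{2}$ and $1$. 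Taking the expectation then annihilates the martingale contributions $E\int\cdots\,dw=0$, after which I would integrate the resulting identity over $Q$.

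For the boundary contributions, the temporal weight $\widetilde{\theta}$ vanishes (exponentially) at both $t=0$ and $t=T$, since $\widetilde{a}\to-\infty$ there; consequently all time-boundary terms drop out upon integration and, in particular, the terminal datum $z_T$ never enters the estimate. This is precisely why passing from the forward initial-value problem to the backward terminal-value problem requires no new idea. The spatial boundary terms at $x=0,1$ reduce, via $z=z_x=0$, to traces of $z_{xx}$ and $z_{xxx}$, which are well defined by the hidden regularity (Proposition \ref{P3}); since $x_{0}>1$ forces $\widetilde{\psi}_{x}=2(x-x_0)<0$ on $\overline{I}$, the geometry concentrates the retained contributions at $x=0$, matching the right-hand side of the claimed inequality, while the terms at $x=1$ carry the favorable sign and are discarded.

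Finally I would absorb the remaining lower-order terms by choosing $\mu$ and then $\lambda$ sufficiently large, exactly as in Theorem \ref{T3}. The main obstacle I anticipate is purely bookkeeping: checking that the It\^o corrections generated by $Z$, $Z_{x}$, $Z_{xx}$ attach to the correct powers of $\lambda\widetilde{\varphi}$ and are dominated by the left-hand side after the large-parameter choice. Beyond this verification, the backward stochastic structure introduces no substantive new difficulty over the forward case treated in Theorem \ref{T3}.
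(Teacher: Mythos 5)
Your proposal is correct and follows essentially the same route as the paper, which itself proves Theorem \ref{T6} only by the remark that the method of Theorem \ref{T3} applies verbatim, with the quadratic-variation corrections now generated by $Z$ in place of $g$ and the boundary contribution retained at $x=0$ because $\widetilde{\psi}_{x}<0$ on $\overline{I}$. The only slips are cosmetic: the multiplier $I_{1}$ is a third-order (not first-order) expression in $\overline{v}$, and the hidden regularity for the backward system is Proposition \ref{P4} rather than Proposition \ref{P3}.
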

Now we consider the exact controllability of the following system:
\begin{eqnarray}\label{1.3}
\begin{array}{l}
\left\{
\begin{array}{lll}idy+y_{xxxx}dt=(ay+f)dt+(by+g)dw\\
y(0,t)=u_{1}(t),y(1,t)=0\\
y_{x}(0,t)=u_{2}(t),y_{x}(1,t)=0\\
y(x,0)=y_{0}(x)
\end{array}
\right.
\end{array}
\begin{array}{lll}\textrm{in}~Q,
\\\textrm{in}~(0,T),\\\textrm{in}~(0,T),\\\textrm{in}~I.\end{array}
\end{eqnarray}
\begin{definition}
System (\ref{1.3}) is said to be exactly controllable at time $T$ if for every initial state $y_{0}\in X_{3}^{\prime}$ and every $y_{1}\in L^{2}\mathcal (\Omega,\mathcal
{F}_{T},P; X_{3}^{\prime}),$ one can find controls
\begin{eqnarray*}
(u_{1},u_{2},g)\in L^{2}_{\mathcal{F}}(\Omega,L^{2}(0,T))\times L^{2}_{\mathcal{F}}(\Omega,L^{2}(0,T))\times L_{\mathcal {F}}^{2}(0,T;X_{3}^{\prime})
\end{eqnarray*}
 such that the solution of the system (\ref{1.3}) satisfies that $y(T)=y_{1}$ in $L^{2}\mathcal (\Omega,\mathcal
{F}_{T},P; X_{3}^{\prime}).$
\end{definition}
\par
In order to establish the exactly controllability of (\ref{1.3}), we introduce the dual system of (\ref{1.3})
\begin{eqnarray}\label{1.48}
\begin{array}{l}
\left\{
\begin{array}{lll}
idz+z_{xxxx}dt=(\overline{a}z-i\overline{b}Z)dt+Zdw\\
z(0,t)=0=z(1,t)\\
z_{x}(0,t)=0=z_{x}(1,t)\\
z(x,T)=z_{T}(x)
\end{array}
\right.
\end{array}
\begin{array}{lll}\textrm{in}~Q,
\\\textrm{in}~(0,T),\\\textrm{in}~(0,T),\\\textrm{in}~I.
\end{array}
\end{eqnarray}
By the same method in Proof of Corollary \ref{C5}, we can obtain
\begin{corollary}\label{C6}
Let $(z,Z)$ solve (\ref{1.48}) with the terminal state $z_{T}\in L^{2}\mathcal (\Omega,\mathcal
{F}_{T},P; X_{3}).$ Then we have
\begin{eqnarray}\label{1.29}
\begin{array}{l}
\begin{array}{llll}
\displaystyle\|z_{T}\|_{L^{2}\mathcal (\Omega,\mathcal{F}_{T},P; X_{3})}
\leq C\Big[\|z_{xx}(0,\cdot)\|_{L^{2}_{\mathcal{F}}(\Omega,L^{2}(0,T))}+\|z_{xxx}(0,\cdot)\|_{L^{2}_{\mathcal{F}}(\Omega,L^{2}(0,T))} \\~~~~~~~~~~~~~~~~~~~~~~~~~~~~~~~~~~~~~+\|Z\|_{L_{\mathcal{F}}^{2}(0,T;X_{3})}\Big],
\end{array}
\end{array}
\end{eqnarray}
where $C=C(a,b,T).$
\end{corollary}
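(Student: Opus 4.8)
The plan is to mirror the proof of Corollary \ref{C5}, now combining the backward Carleman estimate of Theorem \ref{T6} with an energy estimate. Since the dual system (\ref{1.48}) is exactly (\ref{15}) with source $h=\overline{a}z-i\overline{b}Z$, I would first apply Theorem \ref{T6} with this choice of $h$. Invoking (H6), i.e.\ $a,b\in L^{\infty}_{\mathcal{F}}(0,T;W^{4,\infty}(I))$, the source contributes $\widetilde{\theta}^{2}|h|^{2}\leq C\widetilde{\theta}^{2}(|z|^{2}+|Z|^{2})$ to the right-hand side. The term $\widetilde{\theta}^{2}|z|^{2}$ is absorbed into the left-hand side term $\lambda^{7}\widetilde{\varphi}^{7}\widetilde{\theta}^{2}|z|^{2}$: because $\widetilde{\varphi}\geq 4e^{\mu\min\widetilde{\psi}}/T^{2}>0$ on $Q$, for $\lambda$ large enough $\lambda^{7}\widetilde{\varphi}^{7}$ dominates the constant coming from $|h|^{2}$, so that half of this left-hand side term swallows $C\widetilde{\theta}^{2}|z|^{2}$; the term $C\widetilde{\theta}^{2}|Z|^{2}$ is merged into the existing $\lambda^{4}\widetilde{\varphi}^{4}\widetilde{\theta}^{2}|Z|^{2}$ on the right.

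After this absorption I would fix $\lambda,\mu$ at admissible values, so that every weight becomes a fixed function of $(x,t)$ to be hidden in the constant. On the boundary integral, the products $\widetilde{\varphi}(0,t)\widetilde{\theta}^{2}(0,t)$ and $\widetilde{\varphi}^{3}(0,t)\widetilde{\theta}^{2}(0,t)$ are bounded on $(0,T)$ (the polynomial blow-up of $\widetilde{\varphi}$ is dominated by the exponential decay of $\widetilde{\theta}^{2}$ as $t\to 0,T$), so this integral is controlled by $C\|z_{xx}(0,\cdot)\|_{L^{2}_{\mathcal{F}}(\Omega,L^{2}(0,T))}^{2}+C\|z_{xxx}(0,\cdot)\|_{L^{2}_{\mathcal{F}}(\Omega,L^{2}(0,T))}^{2}$. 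For the same reason the weighted interior terms $\lambda^{4}\widetilde{\varphi}^{4}\widetilde{\theta}^{2}|Z|^{2}+\lambda^{2}\widetilde{\varphi}^{2}\widetilde{\theta}^{2}|Z_{x}|^{2}+\widetilde{\theta}^{2}|Z_{xx}|^{2}$ are bounded by $C\|Z\|_{L^{2}_{\mathcal{F}}(0,T;X_{3})}^{2}$. Thus the whole right-hand side of the Carleman estimate is controlled by the right-hand side of (\ref{1.29}).

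On the left-hand side I would discard all but the contribution over the sub-cylinder $I\times[T/4,3T/4]$, where $\widetilde{\varphi}$ and $\widetilde{\theta}$ are bounded above and below by positive constants. Using the norm equivalence $\int_{I}(|z|^{2}+|z_{x}|^{2}+|z_{xx}|^{2}+|z_{xxx}|^{2})dx\sim\|z\|_{X_{3}}^{2}$ (valid under the compatibility conditions defining $X_{3}$), the surviving part dominates $C^{-1}E\int_{T/4}^{3T/4}\|z(t)\|_{X_{3}}^{2}dt$. It then remains to bound the terminal norm $\|z_{T}\|_{L^{2}(\Omega,\mathcal{F}_{T},P;X_{3})}^{2}=E\|z(T)\|_{X_{3}}^{2}$ from above by $CE\int_{T/4}^{3T/4}\|z(t)\|_{X_{3}}^{2}dt$ plus a source contribution controlled by $\|Z\|_{L^{2}_{\mathcal{F}}(0,T;X_{3})}^{2}$.

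This last energy estimate is the main obstacle. Applying It\^{o}'s formula to $\|z(t)\|_{X_{3}}^{2}$ for the backward equation (\ref{1.48}) and taking expectations, the conservative structure of the operator $\partial_{x}^{4}$ (its skew-adjointness under the boundary conditions $z(0,t)=z(1,t)=z_{x}(0,t)=z_{x}(1,t)=0$) should annihilate the leading deterministic term, leaving the contributions of $h$ together with the It\^{o} correction involving $\|Z\|_{X_{3}}^{2}$. Absorbing the $z$-part of $h=\overline{a}z-i\overline{b}Z$ by (H6) and applying Gronwall's inequality yields a comparison $E\|z(T)\|_{X_{3}}^{2}\leq C\bigl(E\|z(s)\|_{X_{3}}^{2}+E\int_{0}^{T}\|Z\|_{X_{3}}^{2}dt\bigr)$ for each $s$; integrating in $s$ over $[T/4,3T/4]$ gives the desired bound. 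The delicate points are justifying It\^{o}'s formula at the $X_{3}=H^{3}$ regularity level (relying on the well-posedness and hidden regularity already used for (\ref{15})) and controlling the martingale terms in expectation. Combining this energy estimate with the Carleman bound established above then produces (\ref{1.29}).
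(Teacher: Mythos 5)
Your proposal is correct and follows essentially the same route as the paper, which obtains Corollary \ref{C6} by repeating the argument of Corollary \ref{C5} with Theorem \ref{T6} in place of Theorem \ref{T3}: absorb the zero-order source $h=\overline{a}z-i\overline{b}Z$ for $\lambda$ large, bound the weights $\widetilde{\varphi}^{k}\widetilde{\theta}^{2}$ above on $\overline{Q}$ and below on $I\times[T/4,3T/4]$, and conclude with an energy estimate integrated over $[T/4,3T/4]$. The only deviation is that you propose to re-derive the backward energy inequality via It\^{o}'s formula at the $H^{3}$ level; the paper already supplies exactly this bound as Proposition \ref{P2} iii) with $s=3$ (note that (\ref{1.48}) is (\ref{1.50}) with $h=0$), so the ``main obstacle'' you flag is already resolved and can simply be cited.
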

\par
By means of Corollary \ref{C6} and the duality argument, we can obtain the following exact controllability result for the system (\ref{1.3}).
\begin{theorem}\label{T5}
System (\ref{1.3}) is exactly controllable at any time $T>0.$
\end{theorem}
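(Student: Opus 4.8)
The plan is to prove exact controllability by the classical duality (Hilbert Uniqueness) method, which reduces the problem to the observability inequality for the dual system (\ref{1.48})---and that inequality is exactly Corollary \ref{C6}. Since the data $y_{0},y_{1}$ lie in $X_{3}^{\prime}$ and the boundary controls $u_{1},u_{2}$ are only square integrable in time, I would first fix the functional setting: solutions of (\ref{1.3}) are understood in the transposition sense, so that $y\in C_{\mathcal{F}}([0,T];L^{2}(\Omega;X_{3}^{\prime}))$ and the terminal value $y(T)$ is well defined in $X_{3}^{\prime}$; for each terminal datum $z_{T}\in L^{2}(\Omega,\mathcal{F}_{T},P;X_{3})$ the dual system (\ref{1.48}) has a unique solution $(z,Z)$ whose traces $z_{xx}(0,\cdot),z_{xxx}(0,\cdot)$ belong to $L^{2}_{\mathcal{F}}(\Omega,L^{2}(0,T))$ by the hidden regularity of Proposition \ref{P3}, so that the observation functional below is meaningful.

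Next I would derive the fundamental duality identity linking (\ref{1.3}) and (\ref{1.48}). Applying It\^o's formula to the pairing $\int_{I}y\overline{z}\,dx$ along a solution $y$ of (\ref{1.3}) and the dual solution $(z,Z)$ of (\ref{1.48}), integrating over $(0,T)$ and taking expectation, the martingale terms vanish in mean. Integrating the biharmonic terms by parts four times produces the boundary bracket $[\,y_{xxx}\overline{z}-y_{xx}\overline{z}_{x}+y_{x}\overline{z}_{xx}-y\overline{z}_{xxx}\,]_{0}^{1}$; the homogeneous conditions on $(z,Z)$ together with $y(1,\cdot)=y_{x}(1,\cdot)=0$ kill every contribution except at $x=0$, where $z=z_{x}=0$ leaves precisely $u_{2}\overline{z_{xx}(0,\cdot)}-u_{1}\overline{z_{xxx}(0,\cdot)}$. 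The drift $\overline{a}z-i\overline{b}Z$ built into (\ref{1.48}) is chosen exactly so that the $ay$ and $by$ contributions cancel against the It\^o cross term $(dy)(d\overline{z})$, which in turn pairs the control $g$ against $Z$. The outcome is an identity of the form
\[
E\,(y_{1},z_{T})_{X_{3}^{\prime},X_{3}}-E\,(y_{0},z(0))_{X_{3}^{\prime},X_{3}}=iE\int_{0}^{T}\big(u_{1}\overline{z_{xxx}(0,t)}-u_{2}\overline{z_{xx}(0,t)}\big)dt+E\int_{0}^{T}(g,\overline{Z})_{X_{3}^{\prime},X_{3}}dt+R(f),
\]
where $y(T)=y_{1}$ is the prescribed target and $R(f)$ is the known bounded linear functional of $z_{T}$ generated by the fixed source $f$.

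With this identity, exact controllability is equivalent to solving, for all $y_{0},y_{1}$, the above relation in the unknown controls $(u_{1},u_{2},g)$. Writing $\ell(z_{T})$ for the left-hand side and setting
\[
\mathcal{N}(z_{T})^{2}=\|z_{xx}(0,\cdot)\|^{2}_{L^{2}_{\mathcal{F}}(\Omega,L^{2}(0,T))}+\|z_{xxx}(0,\cdot)\|^{2}_{L^{2}_{\mathcal{F}}(\Omega,L^{2}(0,T))}+\|Z\|^{2}_{L^{2}_{\mathcal{F}}(0,T;X_{3})},
\]
I would minimize the quadratic functional $J(z_{T})=\tfrac{1}{2}\mathcal{N}(z_{T})^{2}-\mathrm{Re}\,\ell(z_{T})$ over $L^{2}(\Omega,\mathcal{F}_{T},P;X_{3})$. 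The observability inequality (\ref{1.29}) gives $\|z_{T}\|\le C\,\mathcal{N}(z_{T})$, hence the coercivity $J(z_{T})\to+\infty$ as $\|z_{T}\|\to\infty$; together with the strict convexity and weak lower semicontinuity of $J$, the direct method furnishes a unique minimizer $\widehat{z}_{T}$. Denoting by $(\widehat{z},\widehat{Z})$ the associated dual solution, the Euler--Lagrange equation for $J$ is precisely the duality identity, and reading off its optimality conditions identifies the controls: $u_{1}$ and $u_{2}$ as (conjugates of) the traces $\widehat{z}_{xxx}(0,\cdot)$ and $\widehat{z}_{xx}(0,\cdot)$, and $g\in L^{2}_{\mathcal{F}}(0,T;X_{3}^{\prime})$ as the representative of $\widehat{Z}$. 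By construction these controls steer $y_{0}$ to $y_{1}$, and their membership in the required spaces again follows from Proposition \ref{P3} and (\ref{1.29}).

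The main obstacle will be justifying the duality identity rigorously rather than formally: because $y$ is only a transposition solution valued in $X_{3}^{\prime}$ and $u_{1},u_{2}$ are merely $L^{2}$ in time, neither It\^o's formula nor the fourfold integration by parts may be applied to $y$ directly. I would overcome this by a density/regularization argument---establishing the identity first for smooth data and controls, using the trace estimates of Proposition \ref{P3} to bound the boundary terms uniformly, and then passing to the limit---which simultaneously pins down the meaning of the transposition solution and of $y(T)=y_{1}$ in $X_{3}^{\prime}$. Once the identity is secured, the coercivity furnished by Corollary \ref{C6} makes the remaining variational step routine.
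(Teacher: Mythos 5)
Your proposal is correct in substance and rests on the same pillar as the paper -- the boundary observability inequality of Corollary \ref{C6} -- but the functional-analytic mechanism you use to produce the controls is genuinely different. The paper does not minimize a coercive quadratic functional; it defines the subspace $\mathcal{W}$ of observation triples $(z_{xx}(0,\cdot),z_{xxx}(0,\cdot),Z)$, uses (\ref{1.29}) together with Proposition \ref{P2} to show that $z_{T}\mapsto E(y_{1},\overline{z_{T}})_{X_{3}^{\prime},X_{3}}-E\int_{0}^{T}(f,\overline{z})_{X_{3}^{\prime},X_{3}}dt$ induces a bounded linear functional on $\mathcal{W}$, extends it by Hahn--Banach, and obtains $(u_{1},u_{2},g)$ from the Riesz representation theorem; the identity $y(T)=y_{1}$ then follows immediately because the transposition definition of a solution to (\ref{1.3}) \emph{is} the duality identity. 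Your variational (HUM) formulation buys a distinguished, minimal-norm control and avoids the nonconstructive Hahn--Banach extension, at the cost of checking strict convexity and coercivity of $J$ (both of which do follow from (\ref{1.29})). Note also that the "main obstacle" you flag -- rigorously justifying the duality identity for a transposition solution -- is already discharged in the paper: the identity is built into the definition of the solution of (\ref{1.3}), and Proposition \ref{P} (via Propositions \ref{P2} and \ref{P4}) establishes existence, uniqueness and continuity of that solution, so no separate It\^{o}-plus-integration-by-parts or regularization argument is needed at the controllability stage.
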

\par
The controllability problems for linear and nonlinear deterministic fourth order Schr\"{o}dinger equations
are well studied in the literature (see \cite{W1,Z1} and the rich references
cited therein). In contrast, to the author¡¯s knowledge there is no published paper that
addresses the controllability of stochastic fourth order Schr\"{o}dinger equations.
\par
This paper is organized as follows. Section 2 is devoted to the well-posedness results. Section 3 establishes a crucial identity for a stochastic  fourth order Schr\"{o}dinger operator. In
Section 4, we give the proofs of Theorem \ref{T1}, Corollary \ref{C4} and Corollary \ref{C1}. Section 5 is devoted
to proving Theorem \ref{T3}, Corollary \ref{C5} and  Corollary \ref{C2}. In
Section 4, we establish the exact controllability of (\ref{1.3}).
\section{Well-posedness}
\par
In this section we prove the well-posedness results we need along
this paper.
\subsection{Well-posedness of forward and backward stochastic fourth order Schr\"{o}dinger equations with homogeneous boundary value contidion}
\begin{definition} A stochastic process $y$ is said to be a solution
of (\ref{1.49}) if
\begin{eqnarray*}
&&y~{\rm{is}}~ L^{2}(I){\rm{-valued~ and~}} \mathcal
{F}_{t}-{\rm{measurable~for~each~}}t\in [0,T],
\\
&&y\in L^{2}_{\mathcal {F}}(\Omega;C([0,T];L^{2}(I))),
\\
&&y(0)=y_{0}~{\rm{in~}} I, P-a.s.
\end{eqnarray*}
and
\begin{eqnarray}\label{1.5}
\begin{array}{l}
\begin{array}{llll}
\displaystyle
\int_{I}iy(t)vdx=\int_{I}iy_{0}vdx-\displaystyle\int_{0}^{t}\int_{I}y(s)v_{xxxx}
dxds
\\~~~~~~~~~~~~~~~~~~+\displaystyle\int_{0}^{t}\int_{I}(ay+f)vdxds+\int_{0}^{t}\int_{I}(by+g)vdxdw
\end{array}
\end{array}
\end{eqnarray}
holds for all $t\in[0,T]$ and all $v\in C^{\infty}_{0}(\overline{I}),$ for almost
all $\omega\in \Omega.$
\end{definition}

\begin{definition}\label{D3} A pair of stochastic processes $(z,Z)$ is said to be a solution
of
\begin{eqnarray}\label{1.50}
\begin{array}{l}
\left\{
\begin{array}{lll}
idz+z_{xxxx}dt=(az+bZ+h)dt+Zdw\\
z(0,t)=0=z(1,t)\\
z_{x}(0,t)=0=z_{x}(1,t)\\
z(x,T)=z_{T}(x)
\end{array}
\right.
\end{array}
\begin{array}{lll}\textrm{in}~Q,
\\\textrm{in}~(0,T),\\\textrm{in}~(0,T),\\\textrm{in}~I
\end{array}
\end{eqnarray}
if
\begin{eqnarray*}
&&(z,Z)~{\rm{is}}~ L^{2}(I)\times L^{2}(I)-{\rm{valued~ and~}}
\mathcal {F}_{t}-{\rm{measurable~for~each~}}t\in [0,T],
\\
&&(z,Z) \in L^{2}_{\mathcal {F}}(\Omega;C([0,T];L^{2}(I)))\times L^{2}_{\mathcal
{F}}(0,T;L^{2}(I)),
\\
&&z(T)=z_{T}~{\rm{in}}~ I, P-a.s.
\end{eqnarray*}
and
\begin{eqnarray*}
\begin{array}{l}
\begin{array}{llll}
\displaystyle\int_{I}iz_{T}vdx=
\int_{I}iz(t)vdx-\int_{t}^{T}\int_{I}z(s)v_{xxxx}dxds
\\~~~~~~~~~~~~~~~~~~\displaystyle+\int_{t}^{T}\int_{I}(az+bZ+h)vdxds+\int_{t}^{T}\int_{I}Z(s)vdxdw
\end{array}
\end{array}
\end{eqnarray*}
holds for all $t\in[0,T]$ and all $v\in C^{\infty}_{0}(\overline{I}),$ for almost
all $\omega\in \Omega.$
\end{definition}
\par
Consider the one-dimensional fourth order elliptic operator
$\Lambda$ on $L^{2}(I)$ as follows
\begin{eqnarray*}
\begin{array}{l}
\left\{
\begin{array}{llll}
\mathcal{D}(\Lambda)=H^{2}_{0}(I)\cap H^{4}(I),
\\ \Lambda y=y_{xxxx}~~~~\forall y\in \mathcal{D}(\Lambda).
\end{array}
\right.
\end{array}
\end{eqnarray*}
\par Let $\{\varphi_{k}\}_{k=1}^{\infty}$ be the corresponding eigenfunctions of $\Lambda$ such that $\|\varphi_{k}\|_{L^{2}(I)}=1~(k=1,2,3,\cdots),$ which serves as an orthonormal basis of $L^{2}(I)$ (See \cite[Theorem 8.94]{32}).
\par
According to \cite[Theorem 3.7]{F1}, we have
\begin{lemma}\label{L2}
For $0\leq\alpha<\beta$ and $0<s<1.$ We have the following results:
\begin{eqnarray*}
\begin{array}{l}
\begin{array}{llll}
\left[L_{\mathcal {F}}^{2}(\Omega;C([0,T];H^{\alpha}(I))),L_{\mathcal {F}}^{2}(\Omega;C([0,T];H^{\beta}(I)))\right]_{s}=L_{\mathcal {F}}^{2}(\Omega;C([0,T];H^{(1-s)\alpha+s\beta}(I))),
\\
\left [L^{2}_{\mathcal{F}}(0,T;H^{\alpha}(I)),L^{2}_{\mathcal{F}}(0,T;H^{\beta}(I))\right]_{s}=L^{2}_{\mathcal{F}}(0,T;H^{(1-s)\alpha+s\beta}(I)).
\end{array}
\end{array}
\end{eqnarray*}

\end{lemma}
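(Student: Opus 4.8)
The plan is to reduce both identities to a single abstract principle---that complex interpolation commutes with each of the two stochastic function-space functors---and then to compute the resulting interpolation on the Sobolev scale. Write $\mathcal{G}$ for either of the functors $A\mapsto L^{2}_{\mathcal{F}}(\Omega;C([0,T];A))$ or $A\mapsto L^{2}_{\mathcal{F}}(0,T;A)$ acting on a Banach space $A$. Both asserted equalities have the form $[\mathcal{G}(A_{0}),\mathcal{G}(A_{1})]_{s}=\mathcal{G}([A_{0},A_{1}]_{s})$ with $A_{0}=H^{\alpha}(I)$ and $A_{1}=H^{\beta}(I)$, so it suffices to (i) verify that $(A_{0},A_{1})$ is an admissible interpolation couple, (ii) identify $[A_{0},A_{1}]_{s}$, and (iii) justify the commutation $[\mathcal{G}(A_{0}),\mathcal{G}(A_{1})]_{s}=\mathcal{G}([A_{0},A_{1}]_{s})$.

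For step (i), since $0\leq\alpha<\beta$ we have the continuous embedding $H^{\beta}(I)\hookrightarrow H^{\alpha}(I)$, so both endpoint spaces embed continuously into the Hausdorff space $H^{\alpha}(I)$ (indeed into $\mathcal{D}'(I)$); hence $(H^{\alpha}(I),H^{\beta}(I))$ is a compatible couple and the complex method applies. For step (ii), I would invoke the classical complex-interpolation identity for the fractional Sobolev scale on the interval,
\begin{eqnarray*}
[H^{\alpha}(I),H^{\beta}(I)]_{s}=H^{(1-s)\alpha+s\beta}(I),\qquad 0<s<1,
\end{eqnarray*}
which holds because the spaces $H^{\sigma}(I)$ are Bessel-potential spaces and no boundary conditions are imposed here (so no compatibility constraints of the kind appearing in $X_{s}$ enter); this is exactly the interpolation theory recorded in \cite{L1}.

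Step (iii) is the heart of the matter and is where \cite[Theorem 3.7]{F1} is used. Its abstract form asserts precisely that the complex interpolation functor commutes with the adapted constructions $L^{2}_{\mathcal{F}}(\Omega;C([0,T];\cdot))$ and $L^{2}_{\mathcal{F}}(0,T;\cdot)$, sending a couple $(A_{0},A_{1})$ to the space of $\{\mathcal{F}_{t}\}_{t\geq0}$-adapted processes valued in $[A_{0},A_{1}]_{s}$. Applying it to $(A_{0},A_{1})=(H^{\alpha}(I),H^{\beta}(I))$ and substituting the Sobolev identity from step (ii) yields both equalities in the statement.

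The main obstacle is entirely contained in step (iii). For a Bochner $L^{2}$ factor over a $\sigma$-finite measure space the commutation of complex interpolation with the vector-valued construction is classical, but two features of the present setting make it delicate: the factor $C([0,T];\cdot)$ is non-reflexive, so interpolation need not pass cleanly through the space of continuous functions, and the adaptedness requirement realizes these stochastic spaces as closed but in general non-complemented subspaces, for which the naive assertion ``interpolation of a subspace equals the subspace of the interpolation'' can fail. Overcoming this---typically by exhibiting a retraction onto the adapted processes that is simultaneously bounded on both endpoint spaces---is exactly the content that \cite[Theorem 3.7]{F1} supplies, so once that theorem is in hand the remaining work is only the routine Sobolev computation above.
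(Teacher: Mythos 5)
Your proposal is correct and follows essentially the same route as the paper, which offers no argument beyond the citation: Lemma \ref{L2} is stated there as a direct consequence of \cite[Theorem 3.7]{F1}, exactly the commutation result you isolate in your step (iii). Your additional steps (the compatibility of the couple and the classical identity $[H^{\alpha}(I),H^{\beta}(I)]_{s}=H^{(1-s)\alpha+s\beta}(I)$ from \cite{L1}) are a faithful elaboration of what that citation leaves implicit, and your remarks on the non-reflexive factor $C([0,T];\cdot)$ and the adaptedness constraint correctly identify where the cited theorem is doing the real work.
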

\begin{proposition}\label{P1}
The well-posedness of (\ref{1.49}) is given in
the following:
\par
i) Let $y_{0}\in X_{0}$ and $f,g\in L^{2}_{\mathcal{F}}(0,T;X_{0})$ be given. Then (\ref{1.49}) admits a unique solution
$y\in L^{2}_{\mathcal {F}}(\Omega;C([0,T];X_{0}))$ such that
\begin{eqnarray}\label{1.4}
\begin{array}{l}
\begin{array}{llll}
\|y\|_{L^{2}_{\mathcal {F}}(\Omega;C([0,T];X_{0}))}
\leq C(\|y_{0}\|_{X_{0}}+\|f\|_{L_{\mathcal
{F}}^{2}(0,T;X_{0})}+\|g\|_{L_{\mathcal {F}}^{2}(0,T;X_{0})}).
\end{array}
\end{array}
\end{eqnarray}
 Moreover, it holds that
\begin{eqnarray}\label{1.6}
\begin{array}{l}
\begin{array}{llll}
\displaystyle E\|y(t)\|_{X_{0}}^{2}
\leq C [E\|y(\tau)\|_{X_{0}}^{2}+E\int_{t}^{\tau}(\|f(\eta)\|_{X_{0}}^{2}+\|g(\eta)\|_{X_{0}}^{2})d\eta]
\end{array}
\end{array}
\end{eqnarray}
for $0\leq t\leq \tau\leq T,$ where $C=C(a,b,T).$
\par
ii) Let $y_{0}\in X_{4}$ and $f,g\in L^{2}_{\mathcal{F}}(0,T;X_{4})$ be given. Then (\ref{1.49}) admits a unique solution $y\in L^{2}_{\mathcal {F}}(\Omega;C([0,T];X_{4}))$ such that
\begin{eqnarray}\label{1.14}
\begin{array}{l}
\begin{array}{llll}
\|y\|_{L^{2}_{\mathcal {F}}(\Omega;C([0,T];X_{4}))}
\leq C(\|y_{0}\|_{L^{2}\mathcal (\Omega,\mathcal
{F}_{0},P; X_{4})}+\|f\|_{L_{\mathcal
{F}}^{2}(0,T;X_{4})}+\|g\|_{L_{\mathcal {F}}^{2}(0,T;X_{4})}).
\end{array}
\end{array}
\end{eqnarray}  Moreover, it holds that
\begin{eqnarray}\label{1.41}
\begin{array}{l}
\begin{array}{llll}
\displaystyle E\|y(t)\|_{X_{4}}^{2}
\leq C [E\|y(\tau)\|_{X_{4}}^{2}+E\int_{t}^{\tau}(\|f(\eta)\|_{X_{4}}^{2}+\|g(\eta)\|_{X_{4}}^{2})d\eta]
\end{array}
\end{array}
\end{eqnarray}
for $0\leq t\leq \tau\leq T,$ where $C=C(a,b,T).$
\par
iii) For $0\leq s\leq 4.$ Let $y_{0}\in X_{s}$ and $f,g\in L^{2}_{\mathcal{F}}(0,T;X_{s})$ be given. Then (\ref{1.49}) admits a unique solution $y\in L^{2}_{\mathcal {F}}(\Omega;C([0,T];X_{s}))$ such that
\begin{eqnarray}\label{1.22}
\begin{array}{l}
\begin{array}{llll}
\|y\|_{L^{2}_{\mathcal {F}}(\Omega;C([0,T];X_{s}))}
\leq C(\|y_{0}\|_{L^{2}\mathcal (\Omega,\mathcal
{F}_{0},P; X_{s})}+\|f\|_{L_{\mathcal
{F}}^{2}(0,T;X_{s})}+\|g\|_{L_{\mathcal {F}}^{2}(0,T;X_{s})}).
\end{array}
\end{array}
\end{eqnarray}
 Moreover, it holds that
\begin{eqnarray}\label{1.39}
\begin{array}{l}
\begin{array}{llll}
\displaystyle E\|y(t)\|_{X_{s}}^{2}
\leq C [E\|y(\tau)\|_{X_{s}}^{2}+E\int_{t}^{\tau}(\|f(\eta)\|_{X_{s}}^{2}+\|g(\eta)\|_{X_{s}}^{2})d\eta]
\end{array}
\end{array}
\end{eqnarray}
for $0\leq t\leq \tau\leq T,$ where $C=C(a,b,T).$
\end{proposition}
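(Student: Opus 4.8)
The plan is to prove the endpoint cases i) and ii) directly by a spectral Galerkin scheme combined with It\^o's formula, and then to deduce the scale case iii) from them by interpolation via Lemma \ref{L2}. Throughout I rewrite (\ref{1.49}) in It\^o form as
$$dy = iy_{xxxx}\,dt - i(ay+f)\,dt - i(by+g)\,dw ,$$
so that the principal part is governed by $i\Lambda$. Since $\Lambda$ is self-adjoint and nonnegative on $L^{2}(I)$ with eigenbasis $\{\varphi_{k}\}_{k=1}^{\infty}$, the operator $i\Lambda$ generates a unitary $C_{0}$-group; this reversibility is exactly what will produce the two-sided energy estimates (\ref{1.6}), (\ref{1.41}) and (\ref{1.39}).

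For part i) I project (\ref{1.49}) onto $\mathrm{span}\{\varphi_{1},\dots,\varphi_{n}\}$, obtaining a finite-dimensional linear SDE for $y^{n}=\sum_{k\le n}y^{n}_{k}\varphi_{k}$ whose coefficients are $\{\mathcal F_{t}\}$-adapted and, by (H6), bounded, so that it has a unique global solution. Applying It\^o's formula to $\|y^{n}\|_{L^{2}(I)}^{2}=\langle y^{n},y^{n}\rangle$ and taking real parts, the decisive point is that the fourth-order contribution $\mathrm{Re}\,\langle i\Lambda y^{n},y^{n}\rangle=\mathrm{Re}\big(i\langle\Lambda y^{n},y^{n}\rangle\big)=0$ vanishes, since $\Lambda$ is self-adjoint (equivalently, after integrating by parts the boundary terms drop because $y^{n}=y^{n}_{x}=0$ on $\partial I$). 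What is left is $2\,\mathrm{Re}\,\langle -i(ay^{n}+f),y^{n}\rangle+\|by^{n}+g\|_{L^{2}(I)}^{2}$, which is bounded by $C(\|y^{n}\|_{L^{2}(I)}^{2}+\|f\|_{L^{2}(I)}^{2}+\|g\|_{L^{2}(I)}^{2})$. Taking expectations removes the martingale term, and Gronwall's inequality --- run both forward and backward in time, using reversibility --- yields (\ref{1.6}) together with a uniform bound; the Burkholder--Davis--Gundy inequality then upgrades this to the $C([0,T];X_{0})$-norm bound (\ref{1.4}). Passing to the limit $n\to\infty$ gives the unique solution, uniqueness following from linearity by applying the same estimate to the difference of two solutions.

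Part ii) proceeds identically but at the level of $\Lambda y$. As $X_{4}=\mathcal D(\Lambda)$, the norm $\|y\|_{X_{4}}$ is equivalent to $\|\Lambda y\|_{L^{2}(I)}$, and one applies It\^o's formula to $\|\Lambda y^{n}\|_{L^{2}(I)}^{2}$. The principal term again cancels by self-adjointness. The genuinely new terms are the commutators $\Lambda(ay)-a\Lambda y$ and $\Lambda(by)-b\Lambda y$; by the Leibniz rule these contain at most three derivatives of $y$ paired with up to four derivatives of $a$ and $b$, so (H6) renders them bounded in $L^{2}(I)$ by $C\|y\|_{X_{3}}$. Interpolating $\|y\|_{X_{3}}$ between $\|y\|_{X_{0}}$ and $\|y\|_{X_{4}}$ and feeding in the lower-order bound from i), these terms are absorbed, and Gronwall yields (\ref{1.14}) and (\ref{1.41}). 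This is the step I expect to be the main obstacle: one must check that the Galerkin approximations keep the compatibility (boundary) conditions so that the integration by parts defining the $X_{4}$ energy is legitimate, and the commutator bookkeeping is precisely where the hypothesis (H6) that $a,b\in L^{\infty}_{\mathcal F}(0,T;W^{4,\infty}(I))$ is indispensable.

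Finally, part iii) follows by complex interpolation. By i) and ii) the solution map $(y_{0},f,g)\mapsto y$ is bounded from the $X_{0}$-data spaces into $L^{2}_{\mathcal F}(\Omega;C([0,T];X_{0}))$ and from the $X_{4}$-data spaces into $L^{2}_{\mathcal F}(\Omega;C([0,T];X_{4}))$. Choosing the interpolation parameter $\theta=s/4$, Lemma \ref{L2} identifies the intermediate space between the $X_{0}$- and $X_{4}$-valued function spaces as the $X_{s}$-valued one, so the solution map is bounded into $L^{2}_{\mathcal F}(\Omega;C([0,T];X_{s}))$, which gives existence and (\ref{1.22}); uniqueness in $X_{s}$ is inherited from uniqueness in $X_{0}$ because $X_{s}\hookrightarrow X_{0}$. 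The energy estimate (\ref{1.39}) is then obtained either by interpolating (\ref{1.6}) and (\ref{1.41}), or more directly by repeating the It\^o argument with the fractional operator $\Lambda^{s/4}$.
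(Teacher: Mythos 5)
Your proposal follows essentially the same route as the paper: a spectral Galerkin scheme in the eigenbasis of $\Lambda$, It\^o's formula applied to $\|y^{n}\|_{L^{2}(I)}^{2}$ and to $\|\Lambda y^{n}\|_{L^{2}(I)}^{2}$ with the principal term cancelling by self-adjointness, the Burkholder--Davis--Gundy and Gronwall inequalities (including the backward-in-time version for (\ref{1.6}) and (\ref{1.41})), hypothesis (H6) to control the product terms $\Lambda(ay)$, $\Lambda(by)$, and interpolation via Lemma \ref{L2} for the intermediate cases $0<s<4$. The argument is correct and matches the paper's proof in all essential respects.
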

\begin{proof}
\par
Let $(p,q)=\int_{I}pqdx,$ for any $p,q\in L^{2}(I).$
\par
i) Inspired by \cite{Kim}, we use the Galerkin method.
\par It follows from the classical theory of stochastic differential equations (adapted for the complex case) that the following system
\begin{eqnarray}\label{1.28}
\begin{array}{l}
\left\{
\begin{array}{llll}
dc^{m}_{k}=(\sum\limits_{j=1}^{m}a_{kj}c^{m}_{j}+u_{k})dt+(\sum\limits_{j=1}^{m}b_{kj}c^{m}_{j}+v_{k})dw,
\\c^{m}_{k}(0)=(y_{0},\varphi_{k})
\end{array}
\right.
\end{array}
\end{eqnarray}
admits a unique solution $c^{m}_{k}(t),$
where
\begin{eqnarray*}
\begin{array}{l}
\begin{array}{llll}
a_{kj}=(-i)[(a\varphi_{j},\varphi_{k})-(\varphi_{jxxxx},\varphi_{k})],
\\b_{kj}=(-i)(b\varphi_{j},\varphi_{k}),
\\u_{k}=(-i)(f,\varphi_{k}),
\\v_{k}=(-i)(g,\varphi_{k})
\end{array}
\end{array}
\end{eqnarray*}
for $k,j=1,\ldots,m.$
\par
Let us write
$$y^{m}=\sum\limits_{k=1}^{m} c^{m}_{k}\varphi_{k},$$
then $y^{m} \in C([0,T];H^{2}_{0}(I))$ for almost all $\omega\in
\Omega.$ It follows from (\ref{1.28}) that $y^{m}$ satisfies the following equations
\begin{eqnarray}\label{1.47}
\begin{array}{l}
\begin{array}{lll}
id(y^{m},\varphi_{k})+(y^{m}_{xxxx},\varphi_{k})dt=[(ay^{m},\varphi_{k})+(f,\varphi_{k})]dt
+[(by^{m},\varphi_{k})+(g,\varphi_{k})]dw,
\end{array}
\end{array}
\end{eqnarray}
$k=1,\ldots,m.$
\par Direct computation yields
\begin{eqnarray}\label{1.9}
\begin{array}{l}
\begin{array}{llll}
d|c_{k}^{m}|^{2}=[i(\overline{a}\overline{y}^{m},c_{k}\varphi_{k})-i(ay^{m},\overline{c}_{k}\varphi_{k})+i\overline{f}_{k}c_{k}-if_{k}\overline{c}_{k}+|(by^{m},\varphi_{k})+g_{k}|^{2}]dt
\\~~~~~~~~~~~~~~~~~~~~~~~~~~~~~~~~~~~~~+i[(\overline{b}\overline{y}^{m},c_{k}\varphi_{k})-(by^{m},\overline{c}_{k}\varphi_{k})+\overline{g}_{k}c_{k}-g_{k}\overline{c}_{k}]dw
\end{array}
\end{array}
\end{eqnarray}
where $f_{k}=(f,\varphi_{k}),g_{k}=(g,\varphi_{k}).$
\par
We take sums from $1$ to $m$ about $k$ in (\ref{1.9}) to obtain
\begin{eqnarray}\label{1.11}
\begin{array}{l}
\begin{array}{llll}
d\|y^{m}\|_{L^{2}(I)}^{2}=[i(\overline{a}\overline{y}^{m},y^{m})-i(ay^{m},\overline{y}^{m})+i(\overline{f}^{m},y^{m})-i(f^{m},\overline{y}^{m})+\sum\limits_{k=1}^{m}|(by^{m},\varphi_{k})+g_{k}|^{2}]dt
\\~~~~~~~~~~~~~~~~~~~~~~~~~~~~~~~~~~~~~+i[(\overline{b}\overline{y}^{m},y^{m})-(by^{m},\overline{y}^{m})+(\overline{g}^{m},y^{m})-(g^{m},\overline{y}^{m})]dw
\end{array}
\end{array}
\end{eqnarray}
where $f^{m}=\sum\limits_{k=1}^{m}f_{k}\varphi_{k}$ and $g^{m}=\sum\limits_{k=1}^{m}g_{k}\varphi_{k}.$
Namely,
\begin{eqnarray*}
\begin{array}{l}
\begin{array}{llll}
\|y^{m}(t)\|_{L^{2}(I)}^{2}=\|y^{m}(0)\|_{L^{2}(I)}^{2}
\\\displaystyle+\int_{0}^{t}[i(\overline{a}\overline{y}^{m},y^{m})-i(ay^{m},\overline{y}^{m})+i(\overline{f}^{m},y^{m})-i(f^{m},\overline{y}^{m})+\sum\limits_{k=1}^{m}|(by^{m},\varphi_{k})+g_{k}|^{2}]d\eta
\\\displaystyle+\int_{0}^{t}i[(\overline{b}\overline{y}^{m},y^{m})-(by^{m},\overline{y}^{m})+(\overline{g}^{m},y^{m})-(g^{m},\overline{y}^{m})]dw.
\end{array}
\end{array}
\end{eqnarray*}
\par
Next, we fix $m\geq 1$ and any positive integer $L$, and define a
   stopping time
\[
\tau_{L}=\left\{\begin{array}{cc}
    0,&{\rm if } |c_{k}(0)|\geq L, \\
 \inf\{t\in [0,T]:|c_{k}(t)|\geq L \},&{\rm if } |c_{k}(0)|\leq
    L\rm{~and~} \\&\{t\in [0,T]:|c_{k}(t)|\geq L \} \\& \rm{~is~not~empty,}\\
   T,&{\rm if } |c_{k}(0)|\leq L \rm{~and~} \\&\{t\in [0,T]:|c_{k}(t)|\geq L \} \\& \rm{~is~} empty.\end{array} \right.
 \]
\par By the Burkholder-Davis-Gundy inequality and Cauchy
   inequality, we have
 \begin{eqnarray*}
 \begin{array}{lll}
&&E\Big(\sup\limits_{0\leq \tau \leq t\wedge
\tau_{L}}|\displaystyle{\int_{0}^{\tau}i((\overline{b}\overline{y}^{m},y^{m})-(by^{m},\overline{y}^{m}))dw}|\Big)\\
&\leq&\varepsilon E\Big(\sup\limits_{0\leq \tau\leq t\wedge
\tau_{L}}\|y^{m}(\tau)\|_{L^{2}(I)}^{2}\Big)+C(b,\varepsilon)E\Big(\displaystyle \int_{0}^{t\wedge \tau_{L}} \|y^{m}(\tau)\|_{L^{2}(I)}^{2}d\tau\Big),\\
\\
&&E\Big(\sup\limits_{0\leq \tau \leq t\wedge
\tau_{L}}|\displaystyle{\int_{0}^{\tau}i((\overline{g}^{m},y^{m})-(g^{m},\overline{y}^{m}))dw}|\Big)\\
&\leq&\varepsilon E\Big(\sup\limits_{0\leq \tau\leq t\wedge
\tau_{L}}\|y^{m}(\tau)\|_{L^{2}(I)}^{2}\Big)+C(\varepsilon)E\Big(\displaystyle \int_{0}^{t\wedge \tau_{L}} \|g^{m}(\tau)\|_{L^{2}(I)}^{2}d\tau\Big),\\
\\
&&E\Big(\sup\limits_{0\leq \tau \leq t\wedge
\tau_{L}}|\displaystyle{\int_{0}^{\tau}i((\overline{a}\overline{y}^{m},y^{m})-(ay^{m},\overline{y}^{m}))d\eta}|\Big)
\leq C(a)E\Big(\displaystyle \int_{0}^{t\wedge \tau_{L}} \|y^{m}(\tau)\|_{L^{2}(I)}^{2}d\tau\Big),\\
\\
&&E\Big(\sup\limits_{0\leq \tau \leq t\wedge
\tau_{L}}|\displaystyle{\int_{0}^{\tau}i((\overline{f}^{m},y^{m})-(f^{m},\overline{y}^{m}))d\eta}|\Big)\\
&\leq&E\Big(\displaystyle \int_{0}^{t\wedge \tau_{L}} \|f^{m}(\tau)\|_{L^{2}(I)}^{2}d\tau\Big)+E\Big(\displaystyle \int_{0}^{t\wedge \tau_{L}} \|y^{m}(\tau)\|_{L^{2}(I)}^{2}d\tau\Big),
\end{array}
\end{eqnarray*}
\begin{eqnarray*}
 \begin{array}{lll}
&&E\Big(\sup\limits_{0\leq \tau \leq t\wedge
\tau_{L}}|\displaystyle{\int_{0}^{\tau}\sum\limits_{k=1}^{m}|(by^{m},\varphi_{k})+g_{k}|^{2}d\eta}|\Big)\\
&\leq&E\Big(\displaystyle \int_{0}^{t\wedge \tau_{L}}\sum\limits_{k=1}^{m}|(by^{m},\varphi_{k})+g_{k}|^{2} d\tau\Big)\\
&\leq&E\Big(\displaystyle \int_{0}^{t\wedge \tau_{L}}[\|by^{m}(\tau)\|_{L^{2}(I)}^{2}+\| g^{m}(\tau)\|_{L^{2}(I)}^{2}] d\tau\Big)\\
&\leq& C(b)\Big(\displaystyle E\int_{0}^{t\wedge \tau_{L}} \|y^{m}(\tau)\|_{L^{2}(I)}^{2}d\tau+E\int_{0}^{t\wedge \tau_{L}} \|g^{m}(\tau)\|_{L^{2}(I)}^{2}d\tau\Big)
\end{array}
\end{eqnarray*}
for $\forall t\in [0,T]$ and $\forall \varepsilon >0,$ here and below $C(\varepsilon)$
denote positive constants independent of $m.$ Thus, it holds that
\begin{eqnarray*}
\begin{array}{l}
\begin{array}{llll}
\displaystyle E\sup\limits_{0\leq \tau\leq t\wedge \tau_{L}}\|y^{m}(\tau)\|_{L^{2}(I)}^{2}
\\\leq \displaystyle C(a,b)E\Big[\|y^{m}(0)\|_{L^{2}(I)}^{2}
+\int_{0}^{t\wedge \tau_{L}}(\|f^{m}(\tau)\|_{L^{2}(I)}^{2}+\|g^{m}(\tau)\|_{L^{2}(I)}^{2}+ \|y^{m}(\tau)\|_{L^{2}(I)}^{2})d\tau\Big].
\end{array}
\end{array}
\end{eqnarray*}
By passing $L\rightarrow\infty$ in the above equation, we arrive at
\begin{eqnarray*}
\begin{array}{l}
\begin{array}{llll}
\displaystyle E\sup\limits_{0\leq \tau\leq t}\|y^{m}(\tau)\|_{L^{2}(I)}^{2}
\\\leq \displaystyle C(a,b)E\Big[\|y^{m}(0)\|_{L^{2}(I)}^{2}
+\int_{0}^{t}(\|f^{m}(\tau)\|_{L^{2}(I)}^{2}+\|g^{m}(\tau)\|_{L^{2}(I)}^{2}+ \|y^{m}(\tau)\|_{L^{2}(I)}^{2})d\tau\Big].
\end{array}
\end{array}
\end{eqnarray*}
Applying the Gronwall inequality, we can obtain
\begin{eqnarray}\label{3}
\begin{array}{l}
\begin{array}{llll}
\displaystyle E\sup\limits_{0\leq \tau\leq t}\|y^{m}(\tau)\|_{L^{2}(I)}^{2}
\leq C(a,b,T)E\Big[\|y^{m}(0)\|_{L^{2}(I)}^{2}
+\int_{0}^{t}(\|f^{m}(\tau)\|_{L^{2}(I)}^{2}+\|g^{m}(\tau)\|_{L^{2}(I)}^{2})d\tau\Big]
\end{array}
\end{array}
\end{eqnarray}
for all $t\in [0,T].$
\par By the same argument, we also have, for $m\geq
n\geq 1,$
\begin{eqnarray}\label{1.8}
\begin{array}{l}
\begin{array}{llll}
&&\displaystyle E\sup\limits_{0\leq \tau\leq t}\|y^{m}(\tau)-y^{n}(\tau)\|_{X_{0}}^{2}
\\&\leq& \displaystyle CE\Big[\|y^{m}(0)-y^{n}(0)\|_{X_{0}}^{2}
\\&&~~~~~~~~~~\displaystyle+\int_{0}^{t}(\| f^{m}(\tau)- f^{n}(\tau)\|_{X_{0}}^{2}+\|g^{m}(\tau)-g^{n}(\tau)\|_{X_{0}}^{2})d\tau\Big]
\end{array}
\end{array}
\end{eqnarray}
where $C$ denotes a positive contant independent of $m,n.$
Next we observe that the right-hand side of (\ref{1.8}) converges to
zero as $n,m\rightarrow \infty.$ Hence, it follows that
$\{y^{m}\}_{m=1}^{\infty}$ is a Cauchy sequence that converges
strongly in $L_{\mathcal {F}}^{2}(\Omega;C([0,T];X_{0})).$ Let $y$ be the limit. It
is apparent that $y$ satisfies the initial in
(\ref{1}), and $y(t)$ is $\mathcal {F}_{t}-$adapted for each
$t\in[0,T].$ Also, it follows from (\ref{1.47}) that (\ref{1.5}) holds. Furthermore, by passing $m\rightarrow\infty$ in (\ref{3}), we arrive at (\ref{1.4}).
\par For the uniqueness of the solution, we
suppose that $y_{1}$ and $y_{2}$ are two solutions of (\ref{1.49}).
Let $y=y_{1}-y_{2}.$ Then
\begin{eqnarray*}
E\Big(\sup\limits_{0\leq \tau\leq T}\| y(\tau)\|_{X_{0}}^{2}\Big)\leq 0,
\end{eqnarray*}thus
$y\equiv 0$ for any $t\in [0,T],$ for almost all $\omega \in \Omega.$
\par
It follows from (\ref{1.11}) that
\begin{eqnarray*}
\begin{array}{l}
\begin{array}{llll}
\|y^{m}(\tau)\|_{L^{2}(I)}^{2}=\|y^{m}(t)\|_{L^{2}(I)}^{2}
\\\displaystyle+\int_{t}^{\tau}[i(\overline{a}\overline{y}^{m},y^{m})-i(ay^{m},\overline{y}^{m})+i(\overline{f}^{m},y^{m})-i(f^{m},\overline{y}^{m})+\sum\limits_{k=1}^{m}|(by^{m},\varphi_{k})+g_{k}|^{2}]d\eta
\\\displaystyle+\int_{t}^{\tau}i[(\overline{b}\overline{y}^{m},y^{m})-(by^{m},\overline{y}^{m})+(\overline{g}^{m},y^{m})-(g^{m},\overline{y}^{m})]dw
\end{array}
\end{array}
\end{eqnarray*}
for $0\leq t\leq \tau\leq T.$ By taking expectation in above equality, we can obtain
\begin{eqnarray}\label{1.17}
\begin{array}{l}
\begin{array}{llll}
E\|y^{m}(t)\|_{L^{2}(I)}^{2}= E\|y^{m}(\tau)\|_{L^{2}(I)}^{2}
\displaystyle-E\int_{t}^{\tau}[i(\overline{a}\overline{y}^{m},y^{m})-i(ay^{m},\overline{y}^{m})
\\\displaystyle+i(\overline{f}^{m},y^{m})-i(f^{m},\overline{y}^{m})+\sum\limits_{k=1}^{m}|(by^{m},\varphi_{k})+g_{k}|^{2}]d\eta
\\\leq \displaystyle C(a,b)E\Big[\|y^{m}(\tau)\|_{L^{2}(I)}^{2}
+\int_{t}^{\tau}(\|f^{m}(\eta)\|_{L^{2}(I)}^{2}+\|g^{m}(\eta)\|_{L^{2}(I)}^{2}+ \|y^{m}(\eta)\|_{L^{2}(I)}^{2})d\eta\Big].
\end{array}
\end{array}
\end{eqnarray}
Applying the Gronwall inequality, we can obtain
\begin{eqnarray}\label{1.10}
\begin{array}{l}
\begin{array}{llll}
E\|y^{m}(t)\|_{L^{2}(I)}^{2}\leq \displaystyle C(a,b,T)E\Big[\|y^{m}(\tau)\|_{L^{2}(I)}^{2}
+\int_{t}^{\tau}(\|f^{m}(\eta)\|_{L^{2}(I)}^{2}+\|g^{m}(\eta)\|_{L^{2}(I)}^{2})d\eta\Big].
\end{array}
\end{array}
\end{eqnarray}
Taking now the limit $m\rightarrow \infty$ in (\ref{1.10}), we can obtain (\ref{1.6}).
\par
ii)
Inspired by \cite{G2}, we multiply (\ref{1.9}) by $\lambda_{k}^{2}$ and take sums from $1$ to $m$ about $k$ to obtain
\begin{eqnarray*}
\begin{array}{l}
\begin{array}{llll}
d\|\Lambda y^{m}\|_{L^{2}(I)}^{2}=[i(\overline{a}\overline{y}^{m},\Lambda^{2}y^{m})-i(ay^{m},\Lambda^{2}\overline{y}^{m})+i(\Lambda\overline{f}^{m},\Lambda y^{m})-i(\Lambda f^{m},\Lambda\overline{y}^{m})
\\+\sum\limits_{k=1}^{m}\lambda_{k}^{2}|(by^{m},\varphi_{k})+g_{k}|^{2}]dt
+i[(\overline{b}\overline{y}^{m},\Lambda^{2}y^{m})-(by^{m},\Lambda^{2}\overline{y}^{m})+(\Lambda\overline{g}^{m},\Lambda y^{m})-(\Lambda g^{m},\Lambda\overline{y}^{m})]dw
\end{array}
\end{array}
\end{eqnarray*}
for all $t\in [0,T].$ Using integration by parts, we get that
\begin{eqnarray}\label{1.15}
\begin{array}{l}
\begin{array}{llll}
d\|\Lambda y^{m}\|_{L^{2}(I)}^{2}=[i(\Lambda(\overline{a}\overline{y}^{m}),\Lambda y^{m})
\\-i(\Lambda(ay^{m}),\Lambda\overline{y}^{m})+i(\Lambda\overline{f}^{m},\Lambda y^{m})-i(\Lambda f^{m},\Lambda\overline{y}^{m})
+\sum\limits_{k=1}^{m}\lambda_{k}^{2}|(by^{m},\varphi_{k})+g_{k}|^{2}]dt
\\+i[(\Lambda(\overline{b}\overline{y}^{m}),\Lambda y^{m})-(\Lambda(by^{m}),\Lambda \overline{y}^{m})+(\Lambda\overline{g}^{m},\Lambda y^{m})-(\Lambda g^{m},\Lambda\overline{y}^{m})]dw
\end{array}
\end{array}
\end{eqnarray}
Namely,
\begin{eqnarray*}
\begin{array}{l}
\begin{array}{llll}
\|\Lambda y^{m}(t)\|_{L^{2}(I)}^{2}=\|\Lambda y^{m}(0)\|_{L^{2}(I)}^{2}
\\\displaystyle+\int_{0}^{t}[i(\Lambda(\overline{a}\overline{y}^{m}),\Lambda y^{m})-i(\Lambda(ay^{m}),\Lambda\overline{y}^{m})+i(\Lambda\overline{f}^{m},\Lambda y^{m})-i(\Lambda f^{m},\Lambda\overline{y}^{m})
\\+\sum\limits_{k=1}^{m}\lambda_{k}^{2}|(by^{m},\varphi_{k})+g_{k}|^{2}]d\eta
\\\displaystyle+\int_{0}^{t}i[(\Lambda(\overline{b}\overline{y}^{m}),\Lambda y^{m})-(\Lambda(by^{m}),\Lambda\overline{y}^{m})+(\Lambda\overline{g}^{m},\Lambda y^{m})-(\Lambda g^{m},\Lambda\overline{y}^{m})]dw
\end{array}
\end{array}
\end{eqnarray*}
\par By the Burkholder-Davis-Gundy inequality and Cauchy
   inequality, we have
 \begin{eqnarray*}
 \begin{array}{lll}
&&E\Big(\sup\limits_{0\leq \tau \leq t\wedge
\tau_{L}}|\displaystyle{\int_{0}^{\tau}i((\Lambda(\overline{b}\overline{y}^{m}),\Lambda y^{m})-(\Lambda(by^{m}),\Lambda\overline{y}^{m}))dw}|\Big)\\
&\leq&\varepsilon E\Big(\sup\limits_{0\leq \tau\leq t\wedge
\tau_{L}}\|\Lambda y^{m}(\tau)\|_{L^{2}(I)}^{2}\Big)+C(b,\varepsilon)E\Big(\displaystyle \int_{0}^{t\wedge \tau_{L}} \|y^{m}(\tau)\|_{L^{2}(I)}^{2}+ \|\Lambda y^{m}(\tau)\|_{L^{2}(I)}^{2}d\tau\Big),\\
\\
&&E\Big(\sup\limits_{0\leq \tau \leq t\wedge
\tau_{L}}|\displaystyle{\int_{0}^{\tau}i((\Lambda\overline{g}^{m},\Lambda y^{m})-(\Lambda g^{m},\Lambda\overline{y}^{m}))dw}|\Big)\\
&\leq&\varepsilon E\Big(\sup\limits_{0\leq \tau\leq t\wedge
\tau_{L}}\|\Lambda y^{m}(\tau)\|_{L^{2}(I)}^{2}\Big)+C(\varepsilon)E\Big(\displaystyle \int_{0}^{t\wedge \tau_{L}} \|\Lambda g^{m}(\tau)\|_{L^{2}(I)}^{2}d\tau\Big),\\
\\
&&E\Big(\sup\limits_{0\leq \tau \leq t\wedge
\tau_{L}}|\displaystyle{\int_{0}^{\tau}i((\Lambda(\overline{a}\overline{y}^{m}),\Lambda y^{m})-(\Lambda(ay^{m}),\Lambda\overline{y}^{m}))d\eta}|\Big)\\
&\leq& C(a)\Big(\displaystyle E\int_{0}^{t\wedge \tau_{L}} \|y^{m}(\tau)\|_{L^{2}(I)}^{2}d\tau+E\int_{0}^{t\wedge \tau_{L}} \|\Lambda y^{m}(\tau)\|_{L^{2}(I)}^{2}d\tau\Big),\\
\\
&&E\Big(\sup\limits_{0\leq \tau \leq t\wedge
\tau_{L}}|\displaystyle{\int_{0}^{\tau}i((\Lambda\overline{f}^{m},\Lambda y^{m})-(\Lambda f^{m},\Lambda\overline{y}^{m}))d\eta}|\Big)\\
&\leq&E\Big(\displaystyle \int_{0}^{t\wedge \tau_{L}} \|\Lambda f^{m}(\tau)\|_{L^{2}(I)}^{2}d\tau\Big)+E\Big(\displaystyle \int_{0}^{t\wedge \tau_{L}} \|\Lambda y^{m}(\tau)\|_{L^{2}(I)}^{2}d\tau\Big),\\
\\
&&E\Big(\sup\limits_{0\leq \tau \leq t\wedge
\tau_{L}}|\displaystyle{\int_{0}^{\tau}\sum\limits_{k=1}^{m}\lambda_{k}^{2}|(by^{m},\varphi_{k})+g_{k}|^{2}d\eta}|\Big)\\
&\leq&E\Big(\displaystyle \int_{0}^{t\wedge \tau_{L}}\sum\limits_{k=1}^{m}\lambda_{k}^{2}|(by^{m},\varphi_{k})+g_{k}|^{2} d\tau\Big)\\
&\leq&E\Big(\displaystyle \int_{0}^{t\wedge \tau_{L}}[\|\Lambda(by^{m})(\tau)\|_{L^{2}(I)}^{2}+\|\Lambda g^{m}(\tau)\|_{L^{2}(I)}^{2}] d\tau\Big)\\
&\leq& C(b)\Big(\displaystyle E\int_{0}^{t\wedge \tau_{L}} \|y^{m}(\tau)\|_{L^{2}(I)}^{2}d\tau+E\int_{0}^{t\wedge \tau_{L}} \|\Lambda y^{m}(\tau)\|_{L^{2}(I)}^{2}d\tau+E\int_{0}^{t\wedge \tau_{L}} \|\Lambda g^{m}(\tau)\|_{L^{2}(I)}^{2}d\tau\Big)
\end{array}
\end{eqnarray*}
for $\forall t\in [0,T]$ and $\forall \varepsilon >0,$ here and below $C(\varepsilon)$
denote positive constants independent of $m.$ Thus, if we take $\varepsilon$ small enough, we have
\begin{eqnarray*}
\begin{array}{l}
\begin{array}{llll}
\displaystyle E\sup\limits_{0\leq \tau\leq t\wedge \tau_{L}}\|\Lambda y^{m}(\tau)\|_{L^{2}(I)}^{2}
\\\leq \displaystyle C(a,b)E\Big[\|\Lambda y^{m}(0)\|_{L^{2}(I)}^{2}
\\\displaystyle+\int_{0}^{t\wedge \tau_{L}}(\|\Lambda f^{m}(\tau)\|_{L^{2}(I)}^{2}+\|\Lambda g^{m}(\tau)\|_{L^{2}(I)}^{2}+\|y^{m}(\tau)\|_{L^{2}(I)}^{2} +\|\Lambda y^{m}(\tau)\|_{L^{2}(I)}^{2})d\tau\Big].
\end{array}
\end{array}
\end{eqnarray*}
By passing $L\rightarrow\infty,$ we arrive at
\begin{eqnarray*}
\begin{array}{l}
\begin{array}{llll}
\displaystyle E\sup\limits_{0\leq \tau\leq t}\|\Lambda y^{m}(\tau)\|_{L^{2}(I)}^{2}
\\\leq \displaystyle C(a,b)E\Big[\|\Lambda y^{m}(0)\|_{L^{2}(I)}^{2}
\\\displaystyle+\int_{0}^{t}(\|\Lambda f^{m}(\tau)\|_{L^{2}(I)}^{2}+\|\Lambda g^{m}(\tau)\|_{L^{2}(I)}^{2}+\|y^{m}(\tau)\|_{L^{2}(I)}^{2}+ \|\Lambda y^{m}(\tau)\|_{L^{2}(I)}^{2})d\tau\Big].
\end{array}
\end{array}
\end{eqnarray*}
Applying the Gronwall inequality, we can obtain
\begin{eqnarray*}
\begin{array}{l}
\begin{array}{llll}
\displaystyle E\sup\limits_{0\leq \tau\leq t}\|\Lambda y^{m}(\tau)\|_{L^{2}(I)}^{2}
\\\leq \displaystyle C(a,b,T)E\Big[\|\Lambda y^{m}(0)\|_{L^{2}(I)}^{2}
+\int_{0}^{t}(\|\Lambda f^{m}(\tau)\|_{L^{2}(I)}^{2}+\|\Lambda g^{m}(\tau)\|_{L^{2}(I)}^{2}+\|y^{m}(\tau)\|_{L^{2}(I)}^{2})d\tau\Big].
\end{array}
\end{array}
\end{eqnarray*}
It follows from (\ref{3}) that
\begin{eqnarray}\label{4}
\begin{array}{l}
\begin{array}{llll}
\displaystyle E\sup\limits_{0\leq \tau\leq t}\|\Lambda y^{m}(\tau)\|_{L^{2}(I)}^{2}
\\\leq \displaystyle C(a,b,T)E\Big[\|\Lambda y^{m}(0)\|_{L^{2}(I)}^{2}+\|y^{m}(0)\|_{L^{2}(I)}^{2}+\int_{0}^{t}(\|f^{m}(\tau)\|_{L^{2}(I)}^{2}+\|g^{m}(\tau)\|_{L^{2}(I)}^{2}
\\~~~~~~~~~~~~~~~~~~~~~~~~~~+\|\Lambda f^{m}(\tau)\|_{L^{2}(I)}^{2}+\|\Lambda g^{m}(\tau)\|_{L^{2}(I)}^{2})d\tau\Big]
\end{array}
\end{array}
\end{eqnarray}
for all $t\in [0,T].$
\par
Now summing up (\ref{3}) and (\ref{4}) yields
\begin{eqnarray}\label{1.13}
\begin{array}{l}
\begin{array}{llll}
\displaystyle E\sup\limits_{0\leq \tau\leq t}\| y^{m}(\tau)\|_{X_{4}}^{2}\leq CE\Big[\|y^{m}(0)\|_{X_{4}}^{2}+\int_{0}^{t}(\| f^{m}(\tau)\|_{X_{4}}^{2}+\|g^{m}(\tau)\|_{X_{4}}^{2})d\tau\Big].
\end{array}
\end{array}
\end{eqnarray}
\par By the same argument, we also have, for $m\geq
n\geq 1,$
\begin{eqnarray}\label{1.12}
\begin{array}{l}
\begin{array}{llll}
&&\displaystyle E\sup\limits_{0\leq \tau\leq t}\|y^{m}(\tau)-y^{n}(\tau)\|_{X_{4}}^{2}
\\&\leq& \displaystyle CE\Big[\|y^{m}(0)-y^{n}(0)\|_{X_{4}}^{2}
\\&&~~~~~~~~~~\displaystyle+\int_{0}^{t}(\| f^{m}(\tau)- f^{n}(\tau)\|_{X_{4}}^{2}+\|g^{m}(\tau)-g^{n}(\tau)\|_{X_{4}}^{2})d\tau\Big]
\end{array}
\end{array}
\end{eqnarray}
where $C$ denotes a positive contant independent of $m,n.$
Next we observe that the right-hand side of (\ref{1.12}) converges to
zero as $n,m\rightarrow \infty.$ Hence, it follows that
$\{y^{m}\}_{m=1}^{\infty}$ is a Cauchy sequence that converges
strongly in $L_{\mathcal {F}}^{2}(\Omega;C([0,T];X_{4})).$ Let $y_{1}$ be the limit. It
is easy to know that $y_{1}=y,$ namely, we have
 \begin{eqnarray*}
 y^{m}\rightarrow y ~~{\rm{in}}~~L_{\mathcal {F}}^{2}(\Omega;C([0,T];X_{4})).
\end{eqnarray*}
By passing $m\rightarrow\infty$ in (\ref{1.13}), we arrive at (\ref{1.14}).
\par
It follows from (\ref{1.15})
\begin{eqnarray*}
\begin{array}{l}
\begin{array}{llll}
\|\Lambda y^{m}(\tau)\|_{L^{2}(I)}^{2}=\|\Lambda y^{m}(t)\|_{L^{2}(I)}^{2}
\\\displaystyle+\int_{t}^{\tau}[i(\Lambda (\overline{a}\overline{y}^{m}),\Lambda y^{m})-i(\Lambda (ay^{m}),\Lambda \overline{y}^{m})+i(\Lambda \overline{f}^{m},\Lambda y^{m})-i(\Lambda f^{m},\Lambda \overline{y}^{m})
\\+\sum\limits_{k=1}^{m}\lambda_{k}^{2}|(by^{m},\varphi_{k})+g_{k}|^{2}]d\eta
\\\displaystyle+\int_{t}^{\tau}i[(\Lambda (\overline{b}\overline{y}^{m}),\Lambda y^{m})-(\Lambda (by^{m}),\Lambda \overline{y}^{m})+(\Lambda \overline{g}^{m},\Lambda y^{m})-(\Lambda g^{m},\Lambda \overline{y}^{m})]dw
\end{array}
\end{array}
\end{eqnarray*}
for $0\leq t\leq \tau\leq T.$ By taking expectation in above equality, we can obtain
\begin{eqnarray}\label{1.16}
\begin{array}{l}
\begin{array}{llll}
E\|\Lambda y^{m}(t)\|_{L^{2}(I)}^{2}= E\|\Lambda y^{m}(\tau)\|_{L^{2}(I)}^{2}
\displaystyle-E\int_{t}^{\tau}[i(\Lambda (\overline{a}\overline{y}^{m}),\Lambda y^{m})-i(\Lambda (ay^{m}),\Lambda \overline{y}^{m})
\\\displaystyle+i(\Lambda \overline{f}^{m},\Lambda y^{m})-i(\Lambda f^{m},\Lambda \overline{y}^{m})+\sum\limits_{k=1}^{m}\lambda_{k}^{2}|(by^{m},\varphi_{k})+g_{k}|^{2}]d\eta
\\\leq \displaystyle C(a,b)E\Big[\|\Lambda y^{m}(\tau)\|_{L^{2}(I)}^{2}
\\\displaystyle+\int_{t}^{\tau}(\|\Lambda f^{m}(\eta)\|_{L^{2}(I)}^{2}+\|\Lambda g^{m}(\eta)\|_{L^{2}(I)}^{2}+ \|\Lambda y^{m}(\eta)\|_{L^{2}(I)}^{2}+ \|y^{m}(\eta)\|_{L^{2}(I)}^{2})d\eta\Big].
\end{array}
\end{array}
\end{eqnarray}
\par
Now summing up (\ref{1.17}) and (\ref{1.16}) yields
\begin{eqnarray*}
\begin{array}{l}
\begin{array}{llll}
E\|y^{m}(t)\|_{X_{4}}^{2}\leq \displaystyle C(a,b)E\Big[\|y^{m}(\tau)\|_{X_{4}}^{2}+\int_{t}^{\tau}(\|f^{m}(\eta)\|_{X_{4}}^{2}+\|g^{m}(\eta)\|_{X_{4}}^{2}+\|y^{m}(\eta)\|_{X_{4}}^{2})d\eta\Big].
\end{array}
\end{array}
\end{eqnarray*}

Applying the Gronwall inequality, we can obtain
\begin{eqnarray}\label{1.20}
\begin{array}{l}
\begin{array}{llll}
E\|y^{m}(t)\|_{X_{4}}^{2}\leq \displaystyle C(a,b,T)E\Big[\|y^{m}(\tau)\|_{X_{4}}^{2}
+\int_{t}^{\tau}(\|f^{m}(\eta)\|_{X_{4}}^{2}+\|g^{m}(\eta)\|_{X_{4}}^{2})d\eta\Big].
\end{array}
\end{array}
\end{eqnarray}
Taking now the limit $m\rightarrow \infty$ in (\ref{1.20}), we can obtain (\ref{1.41}).

\par
iii) The main idea here comes from \cite[Lemma 3.3]{B3} and \cite[Theorem 2.9]{R1}. The cases $s=0$ and $s=4$ have been proved in i) and ii). The
cases of $0<s<4$ follows by the interpolation theory in \cite{F1,B1} and Lemma \ref{L1}, thus we can obtain iii).
\par The proof of Proposition \ref{P1} is completed.
\end{proof}
\par
By the same argument as in Proposition \ref{P1}, we have
\begin{proposition}\label{P2}The well-posedness of (\ref{1.50}) is given in
the following:
\par
i) Let $z_{T}\in L^{2}\mathcal (\Omega,\mathcal
{F}_{T},P; X_{0})$ and $h\in L^{2}_{\mathcal{F}}(0,T;X_{0})$ be given. Then (\ref{1.50}) admits a unique solution $(z,Z)\in L^{2}_{\mathcal {F}}(\Omega;C([0,T];X_{0}))\times L^{2}_{\mathcal {F}}(0,T;X_{0})$ such that
\begin{eqnarray*}
\begin{array}{l}
\begin{array}{llll}
\|z\|_{L^{2}_{\mathcal {F}}(\Omega;C([0,T];X_{0}))}+\|Z\|_{L^{2}_{\mathcal {F}}(0,T;X_{0})}
\leq C(\|z_{T}\|_{L^{2}\mathcal (\Omega,\mathcal
{F}_{T},P; X_{0})}+\|h\|_{L_{\mathcal{F}}^{2}(0,T;X_{0})}).
\end{array}
\end{array}
\end{eqnarray*}
 Moreover, it holds that
\begin{eqnarray*}
\begin{array}{l}
\begin{array}{llll}
\displaystyle E\|z(t)\|_{X_{0}}^{2}
\leq C E\|z(\tau)\|_{X_{0}}^{2}+C E\int_{\tau}^{t}(\|h(\eta)\|_{X_{0}}^{2}+\|Z(\eta)\|_{X_{0}}^{2})d\eta,
\end{array}
\end{array}
\end{eqnarray*}
for $0\leq \tau\leq t\leq T,$ where $C=C(a,b,T).$
\par
ii) Let $z_{T}\in L^{2}\mathcal (\Omega,\mathcal
{F}_{T},P; X_{4})$ and $h\in L^{2}_{\mathcal{F}}(0,T;X_{4})$ be given. Then (\ref{1.50}) admits a unique solution $(z,Z)\in L^{2}_{\mathcal {F}}(\Omega;C([0,T];X_{4}))\times L^{2}_{\mathcal {F}}(0,T;X_{4})$ such that
\begin{eqnarray*}
\begin{array}{l}
\begin{array}{llll}
\|z\|_{L^{2}_{\mathcal {F}}(\Omega;C([0,T];X_{4}))}+\|Z\|_{L^{2}_{\mathcal {F}}(0,T;X_{4})}
\leq C(\|z_{T}\|_{L^{2}\mathcal (\Omega,\mathcal
{F}_{T},P; X_{4})}+\|h\|_{L_{\mathcal{F}}^{2}(0,T;X_{4})}).
\end{array}
\end{array}
\end{eqnarray*} Moreover, it holds that
\begin{eqnarray*}
\begin{array}{l}
\begin{array}{llll}
\displaystyle E\|z(t)\|_{X_{4}}^{2}
\leq C E\|z(\tau)\|_{X_{4}}^{2}+C E\int_{\tau}^{t}(\|h(\eta)\|_{X_{4}}^{2}+\|Z(\eta)\|_{X_{4}}^{2})d\eta,
\end{array}
\end{array}
\end{eqnarray*}
for $0\leq \tau\leq t\leq T,$ where $C=C(a,b,T).$
\par
iii) For $0\leq s\leq 4.$ Let $z_{T}\in L^{2}\mathcal (\Omega,\mathcal
{F}_{T},P; X_{s})$ and $h\in L^{2}_{\mathcal{F}}(0,T;X_{s})$ be given. Then (\ref{1.50}) admits a unique solution $(z,Z)\in L^{2}_{\mathcal {F}}(\Omega;C([0,T];X_{s}))\times L^{2}_{\mathcal {F}}(0,T;X_{s})$ such that
\begin{eqnarray}\label{1.27}
\begin{array}{l}
\begin{array}{llll}
\|z\|_{L^{2}_{\mathcal {F}}(\Omega;C([0,T];X_{s}))}+\|Z\|_{L^{2}_{\mathcal {F}}(0,T;X_{s})}
\leq C(\|z_{T}\|_{L^{2}\mathcal (\Omega,\mathcal
{F}_{T},P; X_{s})}+\|h\|_{L_{\mathcal{F}}^{2}(0,T;X_{s})}).
\end{array}
\end{array}
\end{eqnarray}
 Moreover, it holds that
\begin{eqnarray*}
\begin{array}{l}
\begin{array}{llll}
\displaystyle E\|z(t)\|_{X_{s}}^{2}
\leq C E\|z(\tau)\|_{X_{s}}^{2}+C E\int_{\tau}^{t}(\|h(\eta)\|_{X_{s}}^{2}+\|Z(\eta)\|_{X_{s}}^{2})d\eta,
\end{array}
\end{array}
\end{eqnarray*}
for $0\leq \tau\leq t\leq T,$ where $C=C(a,b,T).$
\end{proposition}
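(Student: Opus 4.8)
The plan is to mirror the Galerkin scheme used for Proposition \ref{P1}, adapting it to the backward setting where the unknown is the pair $(z,Z)$ and the data is prescribed at the terminal time $T$. For part i), I would project onto the span of the first $m$ eigenfunctions $\{\varphi_k\}_{k=1}^{m}$ of $\Lambda$, writing $z^{m}=\sum_{k=1}^{m}d_{k}^{m}\varphi_{k}$ and $Z^{m}=\sum_{k=1}^{m}e_{k}^{m}\varphi_{k}$. Testing (\ref{1.50}) against each $\varphi_{k}$ turns it into a finite-dimensional linear backward stochastic ODE for the coefficient vector $(d_{k}^{m},e_{k}^{m})$ with terminal data $d_{k}^{m}(T)=(z_{T},\varphi_{k})$. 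Classical backward SDE theory (the martingale representation theorem together with a contraction argument, adapted to the complex linear case) supplies a unique $\{\mathcal{F}_{t}\}$-adapted solution $(d^{m},e^{m})$ on $[0,T]$, so that $Z^{m}$, unlike in the forward problem, is genuinely part of the unknown rather than prescribed.

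The heart of the matter is the uniform energy estimate. Applying It\^o's formula to $\|z^{m}\|_{L^{2}(I)}^{2}$ and integrating from $t$ to $T$ produces, besides the drift contributions, a term $+\int_{t}^{T}\|Z^{m}\|_{L^{2}(I)}^{2}d\eta$ arising from the quadratic variation of the stochastic integral. This is the structural advantage of the backward equation: after taking expectations and rearranging, one obtains $E\|z^{m}(t)\|_{L^{2}(I)}^{2}+E\int_{t}^{T}\|Z^{m}\|_{L^{2}(I)}^{2}d\eta$ controlled by the terminal data and $h$. The skew-adjoint part $z^{m}_{xxxx}$ contributes nothing to this real energy, as in the deterministic Schr\"odinger case. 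The delicate point is absorbing the coupling generated by the $bZ^{m}$ term in the drift: I would estimate the corresponding product by Cauchy's inequality as $\varepsilon\|Z^{m}\|_{L^{2}(I)}^{2}+C(b,\varepsilon)\|z^{m}\|_{L^{2}(I)}^{2}$, choose $\varepsilon$ small so that $\varepsilon\|Z^{m}\|_{L^{2}(I)}^{2}$ is swallowed by the favorable quadratic-variation term, and treat the remaining terms via the Burkholder-Davis-Gundy and Gronwall inequalities exactly as in Proposition \ref{P1}. The analogous estimate for the differences $z^{m}-z^{n}$ and $Z^{m}-Z^{n}$ shows that $(z^{m},Z^{m})$ is Cauchy in $L^{2}_{\mathcal{F}}(\Omega;C([0,T];X_{0}))\times L^{2}_{\mathcal{F}}(0,T;X_{0})$; its limit is the desired solution, and uniqueness follows by applying the same estimate to the difference of two solutions.

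For part ii), I would repeat the scheme after applying $\Lambda$ to the projected equation, equivalently weighting the $k$-th mode by $\lambda_{k}^{2}$, to obtain an It\^o formula for $\|\Lambda z^{m}\|_{L^{2}(I)}^{2}$. Using the self-adjointness of $\Lambda$ and integration by parts to rewrite $\Lambda(az^{m})$ and $\Lambda(bZ^{m})$ in terms of $\Lambda z^{m}$, $\Lambda Z^{m}$ and lower-order remainders, and invoking the already-established $X_{0}$ bound to control those remainders, yields the $X_{4}$ estimate after the same small-$\varepsilon$ absorption and Gronwall step. Summing the $X_{0}$ and $X_{4}$ estimates gives (\ref{1.27}) for $s=4$.

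Finally, part iii) follows by interpolation: the solution operator $(z_{T},h)\mapsto(z,Z)$ is bounded from $X_{0}$-data into $X_{0}$-valued solutions and from $X_{4}$-data into $X_{4}$-valued solutions, so Lemma \ref{L2} yields boundedness on the intermediate spaces $X_{s}$ for $0<s<4$. I expect the main obstacle to be the backward-specific energy estimate, namely simultaneously extracting the $\|Z\|_{L^{2}(I)}^{2}$ control from the It\^o correction and absorbing the $bZ$ drift coupling, since this has no exact counterpart in the forward Proposition \ref{P1} and is precisely what distinguishes backward stochastic well-posedness from the forward case.
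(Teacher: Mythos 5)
Your proposal follows exactly the route the paper takes: the paper's entire proof of Proposition \ref{P2} is the remark ``by the same argument as in Proposition \ref{P1}'', i.e.\ the Galerkin scheme with $L^{2}$ and $\Lambda$-weighted energy estimates, Burkholder--Davis--Gundy, Gronwall, and interpolation for intermediate $s$. The backward-specific points you supply --- solving the projected system as a finite-dimensional linear BSDE via martingale representation, reading off the $E\int_{t}^{T}\|Z^{m}\|_{L^{2}(I)}^{2}d\eta$ control from the It\^o correction term, and absorbing the $bZ^{m}$ drift coupling by a small-$\varepsilon$ Cauchy estimate --- are precisely the adaptations that ``the same argument'' tacitly requires, and they are correct.
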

\par
Next, we establish the regularity of the solutions to (\ref{1.49}) and (\ref{1.50}).
\begin{proposition}\label{P3}
Let $y_{0}\in X_{3},f,g\in L^{2}_{\mathcal{F}}(0,T;X_{3})$ and $y$ be the solution to (\ref{1.49}),
then $y_{xx}(0,t),y_{xx}(1,t),$ $y_{xxx}(0,t),y_{xxx}(1,t)\in L^{2}_{\mathcal{F}}(\Omega,L^{2}(0,T)).$ Further, it holds that
\begin{eqnarray}\label{1.19}
\begin{array}{l}
\begin{array}{llll}
\|y_{xx}(0,\cdot)\|_{L^{2}_{\mathcal{F}}(\Omega,L^{2}(0,T))}+\|y_{xx}(1,\cdot)\|_{L^{2}_{\mathcal{F}}(\Omega,L^{2}(0,T))} \\~~+\|y_{xxx}(0,\cdot)\|_{L^{2}_{\mathcal{F}}(\Omega,L^{2}(0,T))} +\|y_{xxx}(1,\cdot)\|_{L^{2}_{\mathcal{F}}(\Omega,L^{2}(0,T))}
\\\leq \displaystyle C\Big[\|y_{0}\|_{L^{2}_{\mathcal{F}} (\Omega,\mathcal
{F}_{0},P; X_{3})}
+\| f\|_{L^{2}_{\mathcal{F}}(0,T;X_{3})}+\|g\|_{L^{2}_{\mathcal{F}}(0,T;X_{3})}\Big],

\end{array}
\end{array}
\end{eqnarray}
where $C=C(a,b,T)$ is a constant independent of $y_{0},f$ and $g.$
\end{proposition}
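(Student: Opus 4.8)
The plan is to prove the hidden-regularity estimate (\ref{1.19}) by a multiplier argument: I would test the equation against a carefully chosen vector field multiplier and extract the boundary traces $y_{xx}(0,\cdot),y_{xx}(1,\cdot),y_{xxx}(0,\cdot),y_{xxx}(1,\cdot)$, using the stochastic calculus (It\^o's formula) to handle the $dw$ term. Since these traces do not a priori make sense for an $X_3$-solution (they require more regularity than $H^3(I)$ alone provides near the boundary), the standard route is to establish the estimate first for smooth data and then pass to the limit by density, invoking the well-posedness Proposition \ref{P1} iii) with $s=3$ to control the solution in $L^2_{\mathcal{F}}(\Omega;C([0,T];X_3))$.

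First I would take data $y_0\in X_4$ and $f,g\in L^2_{\mathcal{F}}(0,T;X_4)$, for which Proposition \ref{P1} ii) guarantees a solution $y\in L^2_{\mathcal{F}}(\Omega;C([0,T];X_4))$ whose boundary traces are classically well-defined. The key multiplier is a function of the form $q(x)\,\overline{y}_{xxx}$ (or more precisely the real part of a bracket built from a smooth cutoff $q$ with $q(0)=0$, $q(1)=1$ separating the two endpoints), chosen so that integration by parts in $x$ produces the squared boundary traces $|y_{xx}(j,t)|^2$ and $|y_{xxx}(j,t)|^2$ with definite sign, while the remaining volume terms are controlled by $\|y\|_{X_3}^2$ plus the data. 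Applying It\^o's formula to the pairing $\int_I i\,q\,y\,\overline{y}_{xxx}\,dx$ against the equation $idy+y_{xxxx}dt=(ay+f)dt+(by+g)dw$, integrating over $(0,T)$, and taking expectation, I would arrange the computation so that the boundary terms appear on the left and everything else is absorbed into the right-hand side of (\ref{1.19}); the It\^o correction term coming from the quadratic variation of the $dw$ part contributes an extra $E\int_Q|g|^2$-type quantity that is likewise bounded by the data norms.

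The main obstacle will be the bookkeeping of the It\^o correction and the cross terms. Because the multiplier itself depends on $y$ (it involves $y_{xxx}$), applying It\^o's formula produces a genuinely second-order (in the differential) quadratic-variation term rather than the clean boundary identity one gets in the deterministic case; this term must be identified and estimated carefully, and it is precisely here that the hypothesis $g\in X_3$ (controlling $g$ up to three derivatives, hence $g_{xx}$ at the boundary after integration by parts) is used. A secondary technical point is that the fourth-order operator forces two integrations by parts, generating both $y_{xx}$ and $y_{xxx}$ traces simultaneously, so the multiplier weight $q$ and its derivatives must be tuned so that the lower-order trace $y_{xx}(j,t)$ appears with a controllable sign and can be absorbed.

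Finally I would remove the extra regularity: for $y_0\in X_3$ and $f,g\in L^2_{\mathcal{F}}(0,T;X_3)$, choose approximating sequences $y_0^n\to y_0$ in $X_3$ and $f^n,g^n\to f,g$ in $L^2_{\mathcal{F}}(0,T;X_3)$ with $y_0^n\in X_4$, $f^n,g^n\in L^2_{\mathcal{F}}(0,T;X_4)$. By linearity the difference of solutions solves the same system with the differenced data, so estimate (\ref{1.19}) applied to $y^n-y^m$ shows that the boundary traces form a Cauchy sequence in $L^2_{\mathcal{F}}(\Omega,L^2(0,T))$; their limits then define $y_{xx}(j,\cdot),y_{xxx}(j,\cdot)$ for the $X_3$-solution and satisfy (\ref{1.19}) by passing to the limit, where the constant $C=C(a,b,T)$ is the one from the smooth-data estimate and is independent of $y_0,f,g$.
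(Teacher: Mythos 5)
Your overall strategy coincides with the paper's: both rest on pointwise multiplier identities for the operator $idy+y_{xxxx}dt$ (the paper's Lemma \ref{L1}), integrated over $Q$ with expectation taken, the interior terms being absorbed by the a priori bound of Proposition \ref{P1} iii) with $s=3$; your explicit regularization from $X_{4}$ data followed by a density passage is a sensible step that the paper leaves implicit (Lemma \ref{L1} is stated for $H^{4}$-valued semimartingales). The treatment of the It\^o correction terms as quadratic-variation contributions controlled by $g$ is also consistent with the paper.

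There is, however, one concrete gap: a single multiplier of the form $q(x)\overline{y}_{xxx}$ cannot produce the traces $|y_{xx}(0,t)|^{2}$ and $|y_{xx}(1,t)|^{2}$. Against the principal part, $q\overline{y}_{xxx}y_{xxxx}+qy_{xxx}\overline{y}_{xxxx}=\bigl(q|y_{xxx}|^{2}\bigr)_{x}-q_{x}|y_{xxx}|^{2}$ yields only the third-derivative trace, and against $i\,dy$ every boundary term generated by integration by parts vanishes because the clamped conditions give $dy=dy_{x}=0$ at $x=0,1$. To obtain the $y_{xx}$ traces one needs in addition the first-order multiplier $A\overline{y}_{x}$, whose identity carries the flux $\bigl(3A_{xx}|y_{x}|^{2}-3A|y_{xx}|^{2}\bigr)_{x}$ (together with $\bigl(A|y_{x}|^{2}\bigr)_{xxx}$), reducing at the boundary to a multiple of $A(j)|y_{xx}(j,t)|^{2}$ since $y_{x}(j,t)=0$. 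The paper sums the two identities and takes the single weight $A(x)=-4x^{3}+6x^{2}-1$, for which $A(0)=-1<0<1=A(1)$, so that both traces at both endpoints appear with a coercive sign simultaneously; your proposed normalization $q(0)=0$, $q(1)=1$ would in any case lose the traces at $x=0$ and must be replaced by a weight that is negative at $0$ and positive at $1$ (or by running the argument twice with two cutoffs). With the second multiplier added and the weight adjusted, the rest of your plan goes through.
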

\par In order to prove Proposition \ref{P3}, we first establish the following pointwise identity by some direct computations.
\begin{lemma}\label{L1}
Let $A \in C^{3}(\overline{I},\mathbb{R})$ and $y$ be an $H^{4}(\mathbb{R})$-valued $\{\mathcal
{F}_{t}\}_{t\geq0}$-adapted semi-martingale. Then for a.e. $x\in \mathbb{R}$ and P-a.s. $\omega\in \Omega$, it holds that
\begin{eqnarray*}
\begin{array}{l}
\begin{array}{llll}
A\overline{y}_{x}(idy+y_{xxxx}dt)+Ay_{x}(-id\overline{y}+\overline{y}_{xxxx}dt)
\\=((\frac{-iA}{2})(yd\overline{y}-\overline{y}dy))_{x}-\frac{1}{2}d((-iA)y\overline{y}_{x}-(-iA)y_{x}\overline{y})
\\~~~+\frac{1}{2}(-iA)(dyd\overline{y}_{x}-dy_{x}d\overline{y})+\frac{1}{2}(-iA_{x})(\overline{y}dy-y d\overline{y})+\frac{1}{2}(-iA_{t})(y\overline{y}_{x}-y_{x}\overline{y})dt
\\~~~+(A|y_{x}|^{2})_{xxx}dt-3(A_{x}|y_{x}|^{2})_{xx}dt+(3A_{xx}|y_{x}|^{2}-3A|y_{xx}|^{2})_{x}dt
\\~~~-A_{xxx}|y_{x}|^{2}dt+3A_{x}|y_{xx}|^{2}dt

\end{array}
\end{array}
\end{eqnarray*}
and
\begin{eqnarray*}
\begin{array}{l}
\begin{array}{llll}
A\overline{y}_{xxx}(idy+y_{xxxx}dt)+Ay_{xxx}(-id\overline{y}+\overline{y}_{xxxx}dt)
\\=[(-iA)(y_{xx}d\overline{y}-\overline{y}_{xx}dy)-(-iA_{x})(y_{x}d\overline{y}-\overline{y}_{x}dy)
\\~~~+\frac{1}{2}(-iA_{xx})(yd\overline{y}-\overline{y}dy)-\frac{1}{2}(-iA)(y_{x}d\overline{y}_{x}-\overline{y}_{x}dy_{x})+A|y_{xxx}|^{2}dt]_{x}
\\~~~+\frac{1}{2}d[(-iA)(y_{x}\overline{y}_{xx}-\overline{y}_{x}y_{xx})-(-iA)_{xx}(y\overline{y}_{x}-\overline{y}y_{x})]-\frac{1}{2}(-iA)(dy_{x}d\overline{y}_{xx}-dy_{xx}d\overline{y}_{x})
\\~~~-\frac{3}{2}(-iA)_{x}(\overline{y}_{x}dy_{x}-y_{x}d\overline{y}_{x})+\frac{1}{2}(-iA)_{xx}(dyd\overline{y}_{x}-dy_{x}d\overline{y})
+\frac{1}{2}(-iA)_{xxx}(\overline{y}dy-yd\overline{y})
\\~~~+\frac{1}{2}(-iA)_{xxt}(y\overline{y}_{x}-y_{x}\overline{y})dt-\frac{1}{2}(-iA)_{t}(y_{x}\overline{y}_{xx}-y_{xx}\overline{y}_{x})dt
-A_{x}|y_{xxx}|^{2}dt.
\end{array}
\end{array}
\end{eqnarray*}
\end{lemma}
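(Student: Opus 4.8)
The plan is to establish both identities by direct computation, since each is a pointwise algebraic relation rather than an estimate. Observe that both left-hand sides have the form $A\overline{y}_{(k)}\,Py + Ay_{(k)}\,\overline{Py}$, where $Py := i\,dy + y_{xxxx}\,dt$ is the stochastic fourth-order Schr\"{o}dinger operator, $\overline{Py} = -i\,d\overline{y} + \overline{y}_{xxxx}\,dt$, and $y_{(k)}$ denotes $y_x$ (first identity) or $y_{xxx}$ (second identity). Each left-hand side is therefore real, and I would split it into a \emph{spatial part}, collecting the deterministic $dt$-terms $A(\overline{y}_{(k)}y_{xxxx} + y_{(k)}\overline{y}_{xxxx})\,dt$, and a \emph{stochastic part}, collecting the It\^{o} terms $iA(\overline{y}_{(k)}\,dy - y_{(k)}\,d\overline{y})$. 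The two parts are treated by different mechanisms: spatial integration by parts for the former, It\^{o}'s product rule for the latter.

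For the spatial part I would transfer derivatives off $y_{xxxx}$ by the Leibniz rule. For the first identity, writing $u=y_x$, repeated use of reductions such as $\overline{u}u_{xxx}+u\overline{u}_{xxx} = (\overline{u}u_{xx}+u\overline{u}_{xx})_x - (|u_x|^2)_x$, together with the reality of $|y_x|^2$ and $|y_{xx}|^2$, reorganizes $A(\overline{y}_x y_{xxxx}+y_x\overline{y}_{xxxx})$ into the divergence terms $(A|y_x|^2)_{xxx}$, $-3(A_x|y_x|^2)_{xx}$, $(3A_{xx}|y_x|^2-3A|y_{xx}|^2)_x$ and the remainders $-A_{xxx}|y_x|^2+3A_x|y_{xx}|^2$; the leading contribution $A\,\partial_x^3|y_x|^2$ already reproduces $2A\,\mathrm{Re}(\overline{y}_x y_{xxxx})$, and the other terms account for the derivatives falling on $A$. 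For the second identity this step is shorter, since $\overline{y}_{xxx}y_{xxxx}+y_{xxx}\overline{y}_{xxxx}=(|y_{xxx}|^2)_x$, so $A(\cdots)=(A|y_{xxx}|^2)_x - A_x|y_{xxx}|^2$, giving the displayed spatial terms immediately.

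For the stochastic part I would apply the It\^{o} product rule $d(XY)=X\,dY+Y\,dX+dX\,dY$ to the products appearing inside the $d(\cdots)$ expressions on the right-hand sides, namely $(-iA)y\overline{y}_x$ and $(-iA)y_x\overline{y}$ for the first identity, and additionally $(-iA)(y_x\overline{y}_{xx}-\overline{y}_x y_{xx})$ and $(-iA)_{xx}(y\overline{y}_x-\overline{y}y_x)$ for the second, while expanding the $(\cdots)_x$ expressions by Leibniz. Because $y$ is an $H^4$-valued semimartingale, each spatial derivative $y_x,y_{xx},\dots$ is again a semimartingale whose martingale part is the $x$-derivative of that of $y$, so $\partial_x$ commutes with the differential $d$ and the product rule applies termwise. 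Collecting the genuine $dy$ and $d\overline{y}$ coefficients to reconstruct $iA(\overline{y}_{(k)}\,dy - y_{(k)}\,d\overline{y})$, together with the quadratic-covariation and the $A_x,A_t$-terms, then matches the right-hand side.

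The main obstacle is the careful bookkeeping of the It\^{o} correction terms, that is, the quadratic covariations $dy\,d\overline{y}_x$, $dy_x\,d\overline{y}$ and, for the second identity, $dy_x\,d\overline{y}_{xx}$, $dy_{xx}\,d\overline{y}_x$ and similar. These are exactly what separate the stochastic identities from their deterministic analogues and must be matched one by one against the $\tfrac12(-iA)(dy\,d\overline{y}_x - dy_x\,d\overline{y})$-type expressions. Once the commutation of $d$ with $\partial_x$ is justified from the $H^4$-regularity of $y$, the remaining work is a lengthy but routine reorganization of terms, and no inequality is needed because the assertion is a pointwise identity valid for a.e.\ $x$ and $P$-a.s.\ $\omega$.
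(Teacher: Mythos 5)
Your proposal is correct and takes essentially the same route as the paper, which itself offers no written proof beyond the remark that the identity is ``established by some direct computations.'' Your decomposition into a spatial part (handled by Leibniz reductions such as $\overline{u}u_{xxx}+u\overline{u}_{xxx}=(|u|^{2})_{xxx}-3(|u_{x}|^{2})_{x}$ and the adjoint expansion of $A(|u|^{2})_{xxx}$) and a stochastic part (handled by the It\^{o} product rule, with the quadratic covariations producing the $dy\,d\overline{y}_{x}$-type corrections) is exactly the intended computation, and the terms it generates match the stated right-hand sides.
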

\begin{proof}[Proof of Proposition \ref{P3}]
According to Lemma \ref{L1} and the boundary value conditions of (\ref{1.49}), we can obtain
\begin{eqnarray}\label{1.30}
\begin{array}{l}
\begin{array}{llll}
\displaystyle E\int_{Q}(A|y_{xx}|^{2})_{x}dxdt=-E\int_{I}\int_{0}^{T}A\overline{y}_{x}(idy+y_{xxxx}dt)dx-E\int_{I}\int_{0}^{T}Ay_{x}(-id\overline{y}+\overline{y}_{xxxx}dt)dx
\\-\displaystyle\frac{1}{2}E\int_{Q}d[(-iA)(y\overline{y_{x}}-y_{x}\overline{y})]dx- E\int_{Q}A_{xxx}|y_{x}|^{2}dxdt+3E\int_{Q}A_{x}|y_{xx}|^{2}dxdt
\\+\displaystyle E\int_{Q} \frac{1}{2}(-iA)(dyd\overline{y_{x}}-dy_{x}d\overline{y})dx+E\int_{Q} \frac{1}{2}(-iA)_{x}(\overline{y}dy-yd\overline{y})dx
\\+\displaystyle E\int_{Q} \frac{1}{2}(-iA)_{t}(y\overline{y}_{x}-y_{x}\overline{y})dxdt
\end{array}
\end{array}
\end{eqnarray}
and
\begin{eqnarray}\label{1.31}
\begin{array}{l}
\begin{array}{llll}
\displaystyle E\int_{Q}(A|y_{xxx}|^{2})_{x}dxdt=E\int_{I}\int_{0}^{T}A\overline{y}_{xxx}(idy+y_{xxxx}dt)dx+E\int_{I}\int_{0}^{T}Ay_{xxx}(-id\overline{y}+\overline{y}_{xxxx}dt)dx
\\-\displaystyle\frac{1}{2}E\int_{Q}d[(-iA)(y_{x}\overline{y_{xx}}-y_{xx}\overline{y_{x}})-(-iA)_{xx}(y\overline{y_{x}}-y_{x}\overline{y})]dx
\\+ E\int_{Q} \frac{1}{2}(-iA)(dy_{x}d\overline{y_{xx}}-dy_{xx}d\overline{y_{x}})dx+ E\int_{Q} \frac{3}{2}(-iA)_{x}(\overline{y_{x}}d y_{x}-y_{x}d\overline{y_{x}})dx
\\\displaystyle-E\int_{Q} \frac{1}{2}(-iA)_{xx}(dyd\overline{y_{x}}-dy_{x}d\overline{y})dx-E\int_{Q} \frac{1}{2}(-iA)_{xxx}(\overline{y}dy-yd\overline{y})dx
\\-\displaystyle E\int_{Q} \frac{1}{2}(-iA)_{xxt}(y\overline{y}_{x}-y_{x}\overline{y})dxdt+E\int_{Q} \frac{1}{2}(-iA)_{t}(y_{x}\overline{y}_{xx}-y_{xx}\overline{y_{x}})dxdt
\\+\displaystyle E\int_{Q}A_{x}|y_{xxx}|^{2}dxdt.
\end{array}
\end{array}
\end{eqnarray}
Summing up (\ref{1.30}) and (\ref{1.31}), taking $A(x)=-4x^{3}+6x^{2}-1$ and using the Cauchy inequality, we can obtain
\begin{eqnarray*}
\begin{array}{l}
\begin{array}{llll}
E\displaystyle\int_{Q}(|y_{xx}|^{2}(1,t)+|y_{xx}|^{2}(0,t)+|y_{xxx}|^{2}(1,t)+|y_{xxx}|^{2}(0,t))dt
\\\leq \displaystyle CE\Big[\|y\|^{2}_{C([0,T];X_{3})}
+\int_{0}^{T}\|ay+f\|^{2}_{X_{3}}dt+\int_{0}^{T}\|by+g\|^{2}_{X_{3}}dt\Big].
\end{array}
\end{array}
\end{eqnarray*}
According to (\ref{1.22}) with $s=3,$ this implies (\ref{1.19}).
\end{proof}
\par
By the same method as in Proposition \ref{P3}, we have
\begin{proposition}\label{P4}
Let $z_{T}\in L^{2}_{\mathcal{F}} (\Omega,\mathcal
{F}_{T},P;X_{3}),h\in L^{2}_{\mathcal{F}}(0,T;X_{3})$ and $(z,Z)$ be the solution to (\ref{1.50}),
then $z_{xx}(0,t),z_{xx}(1,t),z_{xxx}(0,t),z_{xxx}(1,t)\in L^{2}_{\mathcal{F}}(\Omega,L^{2}(0,T)).$ Further, it holds that
\begin{eqnarray*}
\begin{array}{l}
\begin{array}{llll}
\|z_{xx}(0,\cdot)\|_{L^{2}_{\mathcal{F}}(\Omega,L^{2}(0,T))}+\|z_{xx}(1,\cdot)\|_{L^{2}_{\mathcal{F}}(\Omega,L^{2}(0,T))} \\~~+\|z_{xxx}(0,\cdot)\|_{L^{2}_{\mathcal{F}}(\Omega,L^{2}(0,T))} +\|z_{xxx}(1,\cdot)\|_{L^{2}_{\mathcal{F}}(\Omega,L^{2}(0,T))}
\\\leq \displaystyle C\Big[\|z_{T}\|_{L^{2}_{\mathcal{F}} (\Omega,\mathcal
{F}_{T},P;X_{3})}+\|h\|_{L^{2}_{\mathcal{F}}(0,T;X_{3})}\Big],

\end{array}
\end{array}
\end{eqnarray*}
where $C=C(a,b,T)$ is a constant independent of $z_{T}$ and $h.$
\end{proposition}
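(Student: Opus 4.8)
The plan is to mirror the proof of Proposition \ref{P3}, applying the pointwise identity of Lemma \ref{L1} to the backward solution $z$ in place of the forward solution $y$. Since $(z,Z)$ solves (\ref{1.50}), the driving expression in Lemma \ref{L1} is now $idz+z_{xxxx}dt=(az+bZ+h)dt+Zdw$, with conjugate $-id\overline{z}+\overline{z}_{xxxx}dt=(\overline{a}\,\overline{z}+\overline{b}\,\overline{Z}+\overline{h})dt+\overline{Z}dw$. First I would substitute these into both identities of Lemma \ref{L1}, integrate over $Q$, and take expectation. The boundary conditions $z(0,t)=z(1,t)=0$ and $z_x(0,t)=z_x(1,t)=0$ (which also force $dz=dz_x=0$ at $x=0,1$) annihilate every boundary contribution except those carrying $z_{xx}$ and $z_{xxx}$, producing exact analogues of (\ref{1.30}) and (\ref{1.31}) with $y$ replaced by $z$, $f$ by $h$, and the martingale coefficient $by+g$ by $Z$.

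Next I would sum the two resulting identities and take $A(x)=-4x^3+6x^2-1$, exactly as in Proposition \ref{P3}. This $A$ satisfies $A(0)=-1$, $A(1)=1$ and $A'(0)=A'(1)=0$, so the divergence terms $E\int_Q(A|z_{xx}|^2)_x\,dxdt$ and $E\int_Q(A|z_{xxx}|^2)_x\,dxdt$ integrate in $x$ to the positive combination $E\int_0^T(|z_{xx}(0,t)|^2+|z_{xx}(1,t)|^2+|z_{xxx}(0,t)|^2+|z_{xxx}(1,t)|^2)\,dt$ of the four desired traces, while the vanishing of $A'$ at the endpoints clears the remaining cross terms. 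The exact time-differential terms $d[\cdots]$ integrate over $[0,T]$ to terminal and initial values bounded by $\|z\|^2_{L^2_{\mathcal{F}}(\Omega;C([0,T];X_3))}$ (using $z(T)=z_T$).

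The only genuinely new ingredient relative to Proposition \ref{P3} is the It\^o correction. Because the martingale part of $dz$ is $-iZ\,dw$, the quadratic-variation terms appearing in Lemma \ref{L1} become $dz\,d\overline{z}_x-dz_x\,d\overline{z}=(Z\overline{Z}_x-Z_x\overline{Z})\,dt$ and, in the second identity, $dz_x\,d\overline{z}_{xx}-dz_{xx}\,d\overline{z}_x=(Z_x\overline{Z}_{xx}-Z_{xx}\overline{Z}_x)\,dt$; the highest derivative produced is $Z_{xx}$, which is admissible since $Z\in L^2_{\mathcal{F}}(0,T;X_3)$. Estimating all such terms by the Cauchy inequality, and using (H6) to control $\|az+bZ+h\|_{X_3}$, yields an intermediate bound whose right-hand side is $C\big[\|z\|^2_{L^2_{\mathcal{F}}(\Omega;C([0,T];X_3))}+\|Z\|^2_{L^2_{\mathcal{F}}(0,T;X_3)}+\|h\|^2_{L^2_{\mathcal{F}}(0,T;X_3)}\big]$. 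Finally I would invoke the well-posedness estimate (\ref{1.27}) of Proposition \ref{P2} with $s=3$, which bounds \emph{both} $\|z\|_{L^2_{\mathcal{F}}(\Omega;C([0,T];X_3))}$ and $\|Z\|_{L^2_{\mathcal{F}}(0,T;X_3)}$ by $C(\|z_T\|_{L^2_{\mathcal{F}}(\Omega,\mathcal{F}_T,P;X_3)}+\|h\|_{L^2_{\mathcal{F}}(0,T;X_3)})$. It is precisely this two-sided control of the pair $(z,Z)$ that makes the final estimate depend only on $z_T$ and $h$, with no $Z$ term surviving on the right.

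The main obstacle is the rigorous justification when the data is only in $X_3$: then $z$ is merely $H^3$-valued in $x$, whereas Lemma \ref{L1} is formulated for an $H^4$-valued semi-martingale. I would handle this by first establishing the trace estimate for $X_4$-regular data, where the integrations by parts and the application of It\^o's formula are fully legitimate, and then extending to $X_3$ data by density of $X_4$ in $X_3$ together with continuity of the trace maps $z\mapsto(z_{xx}(i,\cdot),z_{xxx}(i,\cdot))$, $i=0,1$, exactly the approximation implicit in the proof of Proposition \ref{P3}. Some care is also required in computing the quadratic variations via $d\langle w\rangle=dt$ and the martingale part $-iZ\,dw$, to confirm that no term of order higher than $Z_{xx}$ is generated and that each $Z$-contribution is genuinely controlled within $X_3$.
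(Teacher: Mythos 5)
Your proposal is correct and follows essentially the same route as the paper, which proves Proposition \ref{P4} simply by repeating the argument of Proposition \ref{P3} for the backward equation: apply Lemma \ref{L1} with $A(x)=-4x^{3}+6x^{2}-1$, use the homogeneous boundary conditions to isolate the traces, control the It\^{o} correction terms coming from the martingale part $-iZ\,dw$, and close the estimate with the well-posedness bound (\ref{1.27}) of Proposition \ref{P2} at $s=3$. Your additional remarks on the $X_{4}\to X_{3}$ density argument and on verifying that the quadratic variations produce nothing beyond $Z_{xx}$ are sound and, if anything, more careful than the paper's one-line justification.
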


\subsection{Well-posedness of forward stochastic fourth order Schr\"{o}dinger equation with nonhomogeneous boundary value contidion}
\par
Now, refering to \cite{0,Z1,B2,L1,L2} for the transposition method, we can give a meaning to (\ref{1.3}).
\begin{definition} A stochastic process $y\in C_{\mathcal {F}}([0,T];L^{2}(\Omega;X_{3}^{\prime}))$ is said to be a solution
of (\ref{1.3}) if for every $\tau\in [0,T]$ and every $z_{\tau}\in L^{2}\mathcal (\Omega,\mathcal{F}_{\tau},P; X_{3})$
it holds that
\begin{eqnarray*}
\begin{array}{l}
\begin{array}{llll}
\displaystyle
E(y(\tau),\overline{z}_{\tau})_{X_{3}^{\prime},X_{3}}-E(y_{0},\overline{z}(\cdot,0))_{X_{3}^{\prime},X_{3}}
\\=\displaystyle E\int_{0}^{\tau}i(u_{1}(t)\overline{z}_{xxx}(0,t)-u_{2}(t)\overline{z}_{xx}(0,t))dt
\\~~~~~~~~~~~~~~~~~~~+\displaystyle E\int_{0}^{\tau}[-i(f,\overline{z})_{X_{3}^{\prime},X_{3}}+(g,\overline{Z})_{X_{3}^{\prime},X_{3}}]dt,
\end{array}
\end{array}
\end{eqnarray*}
where $(z,Z)$ is the solution to (\ref{1.48}) with terminal state $z_{\tau}$.
\end{definition}

\begin{proposition}\label{P}
Let $y_{0}\in X_{3}^{\prime}, f\in L^{1}_{\mathcal{F}}(0,T;X_{3}^{\prime})$ and $g\in L^{2}_{\mathcal{F}}(0,T;X_{3}^{\prime})$ be given. Then (\ref{1.3}) admits a unique solution $y\in C_{\mathcal {F}}([0,T];L^{2}(\Omega;X_{3}^{\prime}))$ such that
\begin{eqnarray}\label{1.32}
\begin{array}{l}
\begin{array}{llll}
\|y\|_{C_{\mathcal {F}}([0,T];L^{2}(\Omega;X_{3}^{\prime}))}\leq C(\|y_{0}\|_{X_{3}^{\prime}}+\|f\|_{L_{\mathcal
{F}}^{1}(0,T;X_{3}^{\prime})}+\|g\|_{L_{\mathcal {F}}^{2}(0,T;X_{3}^{\prime})}
\\~~~~~~~~~~~~~~~~~~~~~~~~~~~~~~~~~~~~~~~~+\|u_{1}\|_{L^{2}_{\mathcal{F}}(\Omega,L^{2}(0,T))}+\|u_{2}\|_{L^{2}_{\mathcal{F}}(\Omega,L^{2}(0,T))}).
\end{array}
\end{array}
\end{eqnarray}
where $C=C(a,b,T).$
\end{proposition}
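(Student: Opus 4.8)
The plan is to construct $y$ by the transposition (duality) method, using the adjoint backward system (\ref{1.48}) as the test equation. I would first note that (\ref{1.48}) is exactly the special case of (\ref{1.50}) obtained by replacing $a,b,h$ with $\overline{a},-i\overline{b},0$; since $\overline{a},-i\overline{b}\in L^{\infty}_{\mathcal{F}}(0,T;W^{4,\infty}(I))$ by (H6), Propositions \ref{P2} and \ref{P4} apply verbatim to (\ref{1.48}). Consequently, for each $\tau\in[0,T]$ and each terminal datum $z_{\tau}\in L^{2}(\Omega,\mathcal{F}_{\tau},P;X_{3})$, the system (\ref{1.48}) posed on $[0,\tau]$ with $z(\tau)=z_{\tau}$ has a unique solution $(z,Z)$ obeying, with $C=C(a,b,T)$ independent of $\tau$,
\begin{eqnarray*}
\begin{array}{l}
\begin{array}{llll}
\|z\|_{L^{2}_{\mathcal{F}}(\Omega;C([0,\tau];X_{3}))}+\|Z\|_{L^{2}_{\mathcal{F}}(0,\tau;X_{3})}+\|z_{xx}(0,\cdot)\|_{L^{2}_{\mathcal{F}}(\Omega,L^{2}(0,\tau))}
\\~~~+\|z_{xxx}(0,\cdot)\|_{L^{2}_{\mathcal{F}}(\Omega,L^{2}(0,\tau))}\leq C\|z_{\tau}\|_{L^{2}(\Omega,\mathcal{F}_{\tau},P;X_{3})}.
\end{array}
\end{array}
\end{eqnarray*}

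Next, for each fixed $\tau$ I would define a functional $\mathcal{L}_{\tau}$ on $L^{2}(\Omega,\mathcal{F}_{\tau},P;X_{3})$ by setting $\mathcal{L}_{\tau}(z_{\tau})$ equal to the right-hand side of the defining identity plus the initial term, namely
\begin{eqnarray*}
\begin{array}{l}
\begin{array}{llll}
\mathcal{L}_{\tau}(z_{\tau})=E(y_{0},\overline{z}(\cdot,0))_{X_{3}^{\prime},X_{3}}+E\int_{0}^{\tau}i(u_{1}(t)\overline{z}_{xxx}(0,t)-u_{2}(t)\overline{z}_{xx}(0,t))dt
\\~~~~~~~~~~~~~~~~~~+E\int_{0}^{\tau}[-i(f,\overline{z})_{X_{3}^{\prime},X_{3}}+(g,\overline{Z})_{X_{3}^{\prime},X_{3}}]dt.
\end{array}
\end{array}
\end{eqnarray*}
Using the Cauchy--Schwarz inequality in $\Omega$ together with the pairing bound $|(\cdot,\cdot)_{X_{3}^{\prime},X_{3}}|\le\|\cdot\|_{X_{3}^{\prime}}\|\cdot\|_{X_{3}}$, each group of terms is controlled: the initial term by $\|y_{0}\|_{X_{3}^{\prime}}\|z\|_{L^{2}_{\mathcal{F}}(\Omega;C([0,\tau];X_{3}))}$, the boundary terms by $\|u_{1}\|_{L^{2}_{\mathcal{F}}(\Omega,L^{2}(0,T))}$ and $\|u_{2}\|_{L^{2}_{\mathcal{F}}(\Omega,L^{2}(0,T))}$ multiplied by the hidden-regularity traces $\|z_{xxx}(0,\cdot)\|$, $\|z_{xx}(0,\cdot)\|$, and the forcing terms by $\|f\|_{L^{1}_{\mathcal{F}}(0,T;X_{3}^{\prime})}$ and $\|g\|_{L^{2}_{\mathcal{F}}(0,T;X_{3}^{\prime})}$ against the norms of $z$ and $Z$. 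Invoking the a priori bound above shows $\mathcal{L}_{\tau}$ is a bounded conjugate-linear functional whose operator norm is dominated by the right-hand side of (\ref{1.32}).

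Since $L^{2}(\Omega,\mathcal{F}_{\tau},P;X_{3})$ is a Hilbert space whose antidual is realized, through the pairing $E(\cdot,\overline{\,\cdot\,})_{X_{3}^{\prime},X_{3}}$, by $L^{2}(\Omega,\mathcal{F}_{\tau},P;X_{3}^{\prime})$, the Riesz representation theorem yields a unique $y(\tau)\in L^{2}(\Omega,\mathcal{F}_{\tau},P;X_{3}^{\prime})$ with $E(y(\tau),\overline{z}_{\tau})_{X_{3}^{\prime},X_{3}}=\mathcal{L}_{\tau}(z_{\tau})$ for all $z_{\tau}$; the $\mathcal{F}_{\tau}$-measurability of $y(\tau)$ is built into the representation, and the operator-norm estimate is precisely (\ref{1.32}) evaluated at time $\tau$. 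The process $y$ thus defined satisfies the defining identity by construction, and uniqueness is immediate, since the difference of two solutions induces the zero functional on $L^{2}(\Omega,\mathcal{F}_{\tau},P;X_{3})$ for every $\tau$ and hence vanishes.

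The hard part will be upgrading this pointwise-in-$\tau$ construction to the continuity statement $y\in C_{\mathcal{F}}([0,T];L^{2}(\Omega;X_{3}^{\prime}))$. I would estimate $\|y(\tau_{2})-y(\tau_{1})\|_{L^{2}(\Omega;X_{3}^{\prime})}$ by testing against arbitrary unit-norm $z_{\tau}$ and showing that $\|\mathcal{L}_{\tau_{2}}-\mathcal{L}_{\tau_{1}}\|\to 0$ as $\tau_{2}\to\tau_{1}$: the incremental data integrals $\int_{\tau_{1}}^{\tau_{2}}(\cdots)dt$ vanish by absolute continuity of the integral and the integrability of $f,g,u_{1},u_{2}$, while the dependence of $(z,Z)$ and of its boundary traces on the terminal time is controlled by the time-continuity and energy estimates of Propositions \ref{P2} and \ref{P4}, uniformly over terminal data of unit norm. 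The delicate point is that varying $\tau$ changes simultaneously the terminal time of (\ref{1.48}) and the measurability class $\mathcal{F}_{\tau}$; I would handle this by freezing the terminal datum and comparing the two backward solutions through the energy inequality (\ref{1.27}), which produces the required modulus of continuity and completes the proof.
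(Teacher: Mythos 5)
Your construction of $y(\tau)$ --- the functional built from the backward adjoint system (\ref{1.48}), bounded via Proposition \ref{P2} iii) with $s=3$, $h=0$ and Proposition \ref{P4} with $h=0$, followed by the Riesz representation theorem --- together with the resulting estimate (\ref{1.32}) and the uniqueness argument, is exactly the paper's proof and is fine.

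The gap is in the continuity step, which you correctly single out as the hard part but resolve by a claim that is false. You propose to prove $\|\mathcal{L}_{\tau_{2}}-\mathcal{L}_{\tau_{1}}\|\to 0$ in operator norm on the strength of the assertion that the dependence of $(z,Z)$ and of its boundary traces on the terminal time is controlled ``uniformly over terminal data of unit norm.'' It is not, for two separate reasons. First, the only way to compare a terminal datum $\xi\in L^{2}(\Omega,\mathcal{F}_{\tau_{2}},P;X_{3})$ with an admissible datum at time $\tau_{1}$ is through $E(\xi\mid\mathcal{F}_{\tau_{1}})$, and $E\|\xi-E(\xi\mid\mathcal{F}_{\tau_{1}})\|_{X_{3}}^{2}$ does not tend to $0$ uniformly on the unit ball as $\tau_{2}\downarrow\tau_{1}$: take $\xi=(\tau_{2}-\tau_{1})^{-1/2}(w(\tau_{2})-w(\tau_{1}))\phi$ with $\phi\in X_{3}$ fixed of unit norm, so that $E(\xi\mid\mathcal{F}_{\tau_{1}})=0$ while $\|\xi\|=1$. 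Second, even with the filtration issue frozen, the backward flow over a time increment $\delta$ differs from the identity by an operator that is only strongly, not uniformly, continuous in $\delta$ on $X_{3}$ (high-frequency data), so the energy inequality (\ref{1.27}) cannot yield a modulus of continuity independent of the datum. The paper proves something weaker by a different device: it fixes $\xi\in L^{2}(\Omega,\mathcal{F}_{T},P;X_{3})$, introduces the auxiliary forward random equation (\ref{1.18}) to transport $E(\xi\mid\mathcal{F}_{\tau})$ to time $\tau+\delta$, compares the three backward systems with terminal data $E(\xi\mid\mathcal{F}_{\tau+\delta})$, $\widetilde{z}(\tau+\delta)$ and $E(\xi\mid\mathcal{F}_{\tau})$ (identifying the last two on $[0,\tau]$ by uniqueness), and concludes only that $E(y(\tau+\delta)-y(\tau),\xi)_{X_{3}^{\prime},X_{3}}\to 0$ for each fixed $\xi$, invoking (\ref{1.38}) and (\ref{1.34}), which are statements about a single $\xi$. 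To repair your argument you should either reproduce this fixed-test-element comparison, or, if you insist on genuine norm continuity of $\tau\mapsto y(\tau)$, perform an additional duality step that transfers the non-uniformly-convergent pieces onto the fixed data $y_{0},f,g,u_{1},u_{2}$; your sketch contains neither.
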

\begin{proof}
\par The main idea in this part comes from \cite{0,Z1,B2}.
\par Let us define a linear functional $F$ on $L^{2}\mathcal (\Omega,\mathcal{F}_{\tau},P; X_{3})$ as
\begin{eqnarray*}
\begin{array}{l}
\begin{array}{llll}
F(z_{\tau})=\displaystyle E(y_{0},\overline{z}(\cdot,0))_{X_{3}^{\prime},X_{3}} +E\int_{0}^{\tau}i(u_{1}(t)\overline{z}_{xxx}(0,t)-u_{2}(t)\overline{z}_{xx}(0,t))dt
\\~~~~~~~~~~~+\displaystyle E\int_{0}^{\tau}[-i(f,\overline{z})_{X_{3}^{\prime},X_{3}}+(g,\overline{Z})_{X_{3}^{\prime},X_{3}}]dt.
\end{array}
\end{array}
\end{eqnarray*}
Applying Proposition \ref{P2} iii) with $s=3,h=0$ and Proposition \ref{P4} with $h=0$ to (\ref{1.48}), we can obtain that the solution $(z,Z)$ for (\ref{1.48})
satisfies
\begin{eqnarray*}
\begin{array}{l}
\begin{array}{llll}
\|z\|_{L^{2}_{\mathcal {F}}(\Omega;C([0,\tau];X_{3}))}+\|Z\|_{L^{2}_{\mathcal {F}}(0,\tau;X_{3})}\leq C\|z_{\tau}\|_{L^{2}\mathcal (\Omega,\mathcal{F}_{\tau},P; X_{3})},
\\\|z_{xx}(0,\cdot)\|_{L^{2}_{\mathcal{F}}(\Omega,L^{2}(0,\tau))} +\|z_{xxx}(0,\cdot)\|_{L^{2}_{\mathcal{F}}(\Omega,L^{2}(0,\tau))} \leq \displaystyle C\|z_{\tau}\|_{L^{2}_{\mathcal{F}} (\Omega,\mathcal
{F}_{\tau},P;X_{3})}.
\end{array}
\end{array}
\end{eqnarray*}
Thus
\begin{eqnarray*}
\begin{array}{l}
\begin{array}{llll}
|F(z_{\tau})|\leq C(\|y_{0}\|_{X_{3}^{\prime}}+\|f\|_{L_{\mathcal
{F}}^{1}(0,T;X_{3}^{\prime})}+\|g\|_{L_{\mathcal {F}}^{2}(0,T;X_{3}^{\prime})}
\\~~~~~~~~~~~~~~~~+\|u_{1}\|_{L^{2}_{\mathcal{F}}(\Omega,L^{2}(0,T))}+\|u_{2}\|_{L^{2}_{\mathcal{F}}(\Omega,L^{2}(0,T))})\|z_{\tau}\|_{L^{2}\mathcal (\Omega,\mathcal
{F}_{\tau},P;X_{3})}.
\end{array}
\end{array}
\end{eqnarray*}
Hence, we get that $F$ is bounded linear functional on $L^{2}\mathcal (\Omega,\mathcal{F}_{\tau},P; X_{3}).$ By the Riesz Representation Theorem, we know that there exists a unique $y_{\tau}\in L^{2}\mathcal (\Omega,\mathcal{F}_{\tau},P;X_{3}^{\prime} )$ such that
\begin{eqnarray*}
\begin{array}{l}
\begin{array}{llll}
F(z_{\tau})=E(y_{\tau},z_{\tau})_{X_{3}^{\prime},X_{3}}
\end{array}
\end{array}
\end{eqnarray*}
for any $z_{\tau} \in L^{2}\mathcal (\Omega,\mathcal{F}_{\tau},P; X_{3})$
and
\begin{eqnarray*}
\begin{array}{l}
\begin{array}{llll}
\|y_{\tau}\|_{L^{2}\mathcal (\Omega,\mathcal{F}_{\tau},P; X_{3}^{\prime} )}
\leq C(\|y_{0}\|_{X_{3}^{\prime}}+\|f\|_{L_{\mathcal
{F}}^{1}(0,T;X_{3}^{\prime})}+\|g\|_{L_{\mathcal {F}}^{2}(0,T;X_{3}^{\prime})}
\\~~~~~~~~~~~~~~~~~~~~~~~~~~~~~~~~~~~~+\|u_{1}\|_{L^{2}_{\mathcal{F}}(\Omega,L^{2}(0,T))}+\|u_{2}\|_{L^{2}_{\mathcal{F}}(\Omega,L^{2}(0,T))}).
\end{array}
\end{array}
\end{eqnarray*}
for any $\tau \in (0,T).$
\par
Define a process $y(\cdot)$ by
\begin{eqnarray*}
y(\tau)=y_{\tau}
\end{eqnarray*}
for any $\tau \in (0,T).$
\par
Now we prove that $y\in C_{\mathcal {F}}([0,T];L^{2}(\Omega;X_{3}^{\prime})).$
\par
Indeed, let $\tau\in [0,T)$ and $\xi\in L^{2}\mathcal (\Omega,\mathcal{F}_{T},P;X_{3}).$ Consider the following forward random
Schr\"{o}dinger equation
\begin{eqnarray}\label{1.18}
\begin{array}{l}
\left\{
\begin{array}{llll}
i\widetilde{z}_{t}+\widetilde{z}_{xxxx}=\overline{a}\widetilde{z}
\\\widetilde{z}(0,t)=0=\widetilde{z}(1,t)
\\\widetilde{z}_{x}(0,t)=0=\widetilde{z}_{x}(1,t)
\\\widetilde{z}(\tau)=E(\xi\mid \mathcal{F}_{\tau})
\end{array}
\right.
\end{array}
\begin{array}{lll}
{\rm{in}}~I\times(\tau,\tau+\delta),\\
{\rm{in}}~(\tau,\tau+\delta),\\
{\rm{in}}~(\tau,\tau+\delta),\\
{\rm{in}}~I,
\end{array}
\end{eqnarray}
with $\delta>0$ satisfying that $\tau+\delta<T.$
\par
It is easy to see that
\begin{eqnarray*}
\begin{array}{l}
\begin{array}{llll}
\lim\limits_{\delta\rightarrow 0^{+}}E\|\widetilde{z}(\tau+\delta)-\widetilde{z}(\tau)\|^{2}_{X_{3}}=0.
\end{array}
\end{array}
\end{eqnarray*}
Further, since $\{\mathcal{F}_{t}\}_{t\geq0}$ is the natural filtration of $\{w(t)\}_{t\geq0},$ we have
\begin{eqnarray*}
\begin{array}{l}
\begin{array}{llll}
\lim\limits_{\delta\rightarrow 0^{+}}E\|E(\xi\mid \mathcal{F}_{\tau+\delta})-E(\xi\mid \mathcal{F}_{\tau})\|^{2}_{X_{3}}=0.
\end{array}
\end{array}
\end{eqnarray*}
Thus we have
\begin{eqnarray}\label{1.38}
\begin{array}{l}
\begin{array}{llll}
\lim\limits_{\delta\rightarrow 0^{+}}E\|\widetilde{z}(\tau+\delta)-E(\xi\mid \mathcal{F}_{\tau+\delta})\|^{2}_{X_{3}}=0.
\end{array}
\end{array}
\end{eqnarray}
Let $(z_{1},Z_{1}),(z_{2},Z_{2})$ and $(z_{3},Z_{3})$ satisfy
\begin{eqnarray*}
\begin{array}{l}
\left\{
\begin{array}{lll}
idz_{1}+z_{1xxxx}dt=(\overline{a}z_{1}-i\overline{b}Z_{1})dt+Z_{1}dw\\
z_{1}(0,t)=0=z_{1}(1,t)\\
z_{1x}(0,t)=0=z_{1x}(1,t)\\
z_{1}(x,\tau+\delta)=E(\xi\mid \mathcal{F}_{\tau+\delta})
\end{array}
\right.
\end{array}
\begin{array}{lll}\textrm{in}~I\times(0,\tau+\delta),
\\\textrm{in}~(0,\tau+\delta),\\\textrm{in}~(0,\tau+\delta),\\\textrm{in}~I,
\end{array}
\end{eqnarray*}
\begin{eqnarray*}
\begin{array}{l}
\left\{
\begin{array}{lll}
idz_{2}+z_{2xxxx}dt=(\overline{a}z_{2}-i\overline{b}Z_{2})dt+Z_{2}dw\\
z_{2}(0,t)=0=z_{2}(1,t)\\
z_{2x}(0,t)=0=z_{2x}(1,t)\\
z_{2}(x,\tau+\delta)=\widetilde{z}(\tau+\delta)
\end{array}
\right.
\end{array}
\begin{array}{lll}\textrm{in}~I\times(0,\tau+\delta),
\\\textrm{in}~(0,\tau+\delta),\\\textrm{in}~(0,\tau+\delta),\\\textrm{in}~I
\end{array}
\end{eqnarray*}
and
\begin{eqnarray*}
\begin{array}{l}
\left\{
\begin{array}{lll}
idz_{3}+z_{3xxxx}dt=(\overline{a}z_{3}-i\overline{b}Z_{3})dt+Z_{3}dw\\
z_{3}(0,t)=0=z_{3}(1,t)\\
z_{3x}(0,t)=0=z_{3x}(1,t)\\
z_{3}(x,\tau)=E(\xi\mid \mathcal{F}_{\tau})
\end{array}
\right.
\end{array}
\begin{array}{lll}\textrm{in}~I\times(0,\tau),
\\\textrm{in}~(0,\tau),\\\textrm{in}~(0,\tau),\\\textrm{in}~I.
\end{array}
\end{eqnarray*}
It follows from (\ref{1.38}), Proposition \ref{P2} and Proposition \ref{P4} that
\begin{eqnarray}\label{1.34}
\begin{array}{l}
\begin{array}{llll}
\lim\limits_{\delta\rightarrow 0^{+}}\|z_{1}-z_{2}\|_{L_{\mathcal {F}}^{2}(0,\tau;X_{3})}=0,
\\\lim\limits_{\delta\rightarrow 0^{+}}\|Z_{1}-Z_{2}\|_{L_{\mathcal {F}}^{2}(0,\tau;X_{3})}=0,
\\\lim\limits_{\delta\rightarrow 0^{+}}\|z_{1xx}(0,t)-z_{2xx}(0,t)\|_{L^{2}_{\mathcal{F}}(\Omega,L^{2}(0,\tau))}=0,
\\\lim\limits_{\delta\rightarrow 0^{+}}\|z_{1xxx}(0,t)-z_{2xxx}(0,t)\|_{L^{2}_{\mathcal{F}}(\Omega,L^{2}(0,\tau))}=0,
\\\lim\limits_{\delta\rightarrow 0^{+}}\|z_{1}(0)-z_{2}(0)\|_{X_{3}}=0.
\end{array}
\end{array}
\end{eqnarray}
By the uniqueness of the solution to (\ref{1.18}) and (\ref{1.48}), we have
\begin{eqnarray}\label{1.35}
\begin{array}{l}
\begin{array}{llll}
z_{3}=z_{2}
\\Z_{3}=Z_{2}
\end{array}
\end{array}
\begin{array}{lll}
{\rm{in}}~I\times(0,\tau),\\
{\rm{in}}~I\times(0,\tau).
\end{array}
\end{eqnarray}
From the definition of $y(\tau),$ we have
\begin{eqnarray*}
\begin{array}{l}
\begin{array}{llll}
E(y(\tau+\delta)-y(\tau),\xi)_{X_{3}^{\prime},X_{3}}
\\=E(y(\tau+\delta),\xi)_{X_{3}^{\prime},X_{3}}-E(y(\tau),\xi)_{X_{3}^{\prime},X_{3}}
\\=E(y(\tau+\delta),E(\xi\mid \mathcal{F}_{\tau+\delta}))_{X_{3}^{\prime},X_{3}}-E(y(\tau),E(\xi\mid \mathcal{F}_{\tau}))_{X_{3}^{\prime},X_{3}}
\\=\displaystyle E(y_{0},\overline{z}_{1}(0)-\overline{z}_{3}(0))_{X_{3}^{\prime},X_{3}}+ E\int_{0}^{\tau}[-i(f,\overline{z}_{1}-\overline{z}_{3})_{X_{3}^{\prime},X_{3}}+(g,\overline{Z}_{1}-\overline{Z}_{3})_{X_{3}^{\prime},X_{3}}]dt
\\\displaystyle+E\int_{0}^{\tau}i[u_{1}(t)(\overline{z}_{1xxx}(0,t)-\overline{z}_{3xxx}(0,t))-u_{2}(t)(\overline{z}_{1xx}(0,t)-\overline{z}_{3xx}(0,t))]dt
\\\displaystyle+E\int_{\tau}^{\tau+\delta}i(u_{1}(t)\overline{z}_{1xxx}(0,t)-u_{2}(t)\overline{z}_{1xx}(0,t))dt+ E\int_{\tau}^{\tau+\delta}[-i(f,\overline{z}_{1})_{X_{3}^{\prime},X_{3}}+(g,\overline{Z}_{1})_{X_{3}^{\prime},X_{3}}]dt
\end{array}
\end{array}
\end{eqnarray*}
This, together with (\ref{1.34}) and (\ref{1.35}), implies that
\begin{eqnarray*}
\lim\limits_{\delta\rightarrow 0^{+}}E(y(\tau+\delta)-y(\tau),\xi)_{X_{3}^{\prime},X_{3}}=0
\end{eqnarray*}
for any $\xi\in L^{2}\mathcal (\Omega,\mathcal{F}_{T},P;X_{3}).$
\par Similarly, we can show that for any $\tau\in (0,T],$
\begin{eqnarray*}
\lim\limits_{\delta\rightarrow 0^{-}}E(y(\tau+\delta)-y(\tau),\xi)_{X_{3}^{\prime},X_{3}}=0
\end{eqnarray*}
for any $\xi\in L^{2}\mathcal (\Omega,\mathcal{F}_{T},P;X_{3}).$
\par
Hence, we have $y\in C_{\mathcal {F}}([0,T];L^{2}(\Omega;X_{3}^{\prime})).$
\end{proof}

\section{An identity for a stochastic fourth order Schr\"{o}dinger operator}
\par
In this section, we obtain an identity for a stochastic fourth order Schr\"{o}dinger operator, which plays a key role in the
proof of Theorem \ref{T1}, Theorem \ref{T3} and Theorem \ref{T6}.
\begin{theorem}\label{T2}
Let $l\in C^{\infty}(\mathbb{R}\times\mathbb{R},\mathbb{R})$ and $\theta=e^{l}.$
Assume that $y$ is a continuous $H^{4}(\mathbb{R})$-valued $\{\mathcal
{F}_{t}\}_{t\geq0}$-adapted semi-martingale.
Put
\begin{eqnarray*}
\begin{array}{l}
\begin{array}{llll}
Ly=idy+y_{xxxx}dt,
\\u=\theta y,
\\\theta Ly=\theta(idy+y_{xxxx}dt)=I_{2}+I_{1}dt,
\\I_{1}=B_{0}u+C_{1}u_{x}+B_{2}u_{xx}+C_{3}u_{xxx},
\\I_{2}=idu+(u_{xxxx}+C_{2}u_{xx}+B_{1}u_{x}+C_{0}u+D_{0}u)dt,
\end{array}
\end{array}
\end{eqnarray*}
where the coefficients $B_{0},B_{1},B_{2},C_{0},C_{1},C_{2},C_{3}$ are real value functions and $D_{0}$ is a complex value function.
Then for a.e. $x\in \mathbb{R}$ and P-a.s. $\omega\in \Omega$, it holds that
\begin{eqnarray}\label{5}
\begin{array}{l}
\begin{array}{llll}
\theta(Ly\cdot \overline{I_{1}}+\overline{Ly}\cdot I_{1})
\\=2|I_{1}|^{2}dt+(|u|^{2}\{\cdot\}+|u_{x}|^{2}\{\cdot\}+|u_{xx}|^{2}\{\cdot\}+|u_{xxx}|^{2}\{\cdot\})dt
\\+(\{\cdot\}_{x}+\{\cdot\}_{xx}+\{\cdot\}_{xxx}+\{\cdot\}_{xxxx})dt+dM
\\+(\overline{u}du-ud\overline{u})\Big[i(B_{0}-\frac{1}{2}C_{1x}+\frac{1}{2}B_{2xx}-\frac{1}{2}C_{3xxx})\Big]+(\overline{u}_{x}du_{x}-u_{x}d\overline{u}_{x})\Big[i(\frac{3}{2}C_{3x}-B_{2})\Big]
\\+(dud\overline{u}_{x}-du_{x}d\overline{u})(-\frac{i}{2})(C_{1}-B_{2x}+C_{3xx})+(du_{x}d\overline{u}_{xx}-du_{xx}d\overline{u}_{x})(\frac{i}{2}C_{3})
\\+u\overline{u}_{x}\Big[(-\frac{i}{2})(C_{1t}-B_{2xt}+C_{3xxt})+C_{1}D_{0}\Big]dt+u_{x}\overline{u}\Big[\frac{i}{2}(C_{1t}-B_{2xt}+C_{3xxt})+C_{1}\overline{D_{0}}\Big]dt
\\+(u_{x}\overline{u}_{xx}-u_{xx}\overline{u}_{x})(\frac{i}{2}C_{3t})dt
\\+B_{2}D_{0}u\overline{u}_{xx}dt+B_{2}\overline{D_{0}}u_{xx}\overline{u}dt
+C_{3}D_{0}u\overline{u}_{xxx}dt+C_{3}\overline{D_{0}}u_{xxx}\overline{u}dt,
\end{array}
\end{array}
\end{eqnarray}
where
\begin{eqnarray*}
\begin{array}{l}
\begin{array}{llll}
|u|^{2}\{\cdot\}=|u|^{2}\{B_{0xxxx}+(B_{0}C_{2})_{xx}-(B_{0}B_{1})_{x}+2B_{0}C_{0}+B_{0}(D_{0}+\overline{D_{0}})-(C_{0}C_{1})_{x}
\\~~~~~~~~~~~~~~~~~~+(B_{2}C_{0})_{xx}-(C_{0}C_{3})_{xxx}\},
\\|u_{x}|^{2}\{\cdot\}=|u_{x}|^{2}\{-4B_{0xx}-2B_{0}C_{2}-C_{1xxx}-(C_{1}C_{2})_{x}+2B_{1}C_{1}-(B_{1}B_{2})_{x}-2B_{2}C_{0}
\\~~~~~~~~~~~~~~~~~~~~+(B_{1}C_{3})_{xx}+3(C_{0}C_{3})_{x}\},
\\|u_{xx}|^{2}\{\cdot\}=|u_{xx}|^{2}\{2B_{0}+3C_{1x}+B_{2xx}+2B_{2}C_{2}-(C_{2}C_{3})_{x}-2B_{1}C_{3}\},
\\|u_{xxx}|^{2}\{\cdot\}=|u_{xxx}|^{2}\{-2B_{2}-C_{3x}\},
\\\{\cdot\}_{x}=\{8B_{0x}|u_{x}|^{2}-4B_{0xxx}|u|^{2}-2(B_{0}C_{2})_{x}|u|^{2}+B_{0}B_{1}|u|^{2}+(\frac{-iC_{1}}{2})(ud\overline{u}-\overline{u}du)
\\~~~~~~~~~~+3C_{1xx}|u_{x}|^{2}-3C_{1}|u_{xx}|^{2}+C_{1}C_{2}|u_{x}|^{2}+C_{0}C_{1}|u|^{2}
\\~~~~~~~~~~+(-iB_{2})(u_{x}d\overline{u}-\overline{u}_{x}du)-\frac{1}{2}(-iB_{2x})(u d\overline{u}-\overline{u}du)-2B_{2x}|u_{xx}|^{2}+B_{1}B_{2}|u_{x}|^{2}
\\~~~~~~~~~~-2(B_{2}C_{0})_{x}|u|^{2}+(-iC_{3})(u_{xx}d\overline{u}-\overline{u}_{xx}du)-(-iC_{3x})(u_{x}d\overline{u}-\overline{u}_{x}du)
\\~~~~~~~~~~+\frac{1}{2}(-iC_{3xx})(ud\overline{u}-\overline{u}du)-\frac{1}{2}(-iC_{3})(u_{x}d\overline{u}_{x}-\overline{u}_{x}du_{x})+C_{3}|u_{xxx}|^{2}
\\~~~~~~~~~~+C_{2}C_{3}|u_{xx}|^{2}-2(B_{1}C_{3})_{x}|u_{x}|^{2}+3(C_{0}C_{3})_{xx}|u|^{2}-3C_{0}C_{3}|u_{x}|^{2}\}_{x},
\\
\{\cdot\}_{xx}=\{6B_{0xx}|u|^{2}-4B_{0}|u_{x}|^{2}+B_{0}C_{2}|u|^{2}-3C_{1x}|u_{x}|^{2}+B_{2}|u_{xx}|^{2}+B_{2}C_{0}|u|^{2}
\\~~~~~~~~~~+B_{1}C_{3}|u_{x}|^{2}-3(C_{0}C_{3})_{x}|u|^{2}\}_{xx},
\\
\{\cdot\}_{xxx}=\{-4B_{0x}|u|^{2}+C_{1}|u_{x}|^{2}+C_{0}C_{3}|u|^{2}\}_{xxx},
\\\{\cdot\}_{xxxx}=\{B_{0}|u|^{2}\}_{xxxx},
\\
M=\frac{i}{2}\{(C_{1}-B_{2x}+C_{3xx})(u\overline{u}_{x}-u_{x}\overline{u})-C_{3}(u_{x}\overline{u}_{xx}-u_{xx}\overline{u}_{x})\}.
\end{array}
\end{array}
\end{eqnarray*}

\end{theorem}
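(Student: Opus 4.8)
The plan is to use the decomposition $\theta Ly=I_2+I_1\,dt$ directly, treating $\overline{I_1}$ as a (complex) coefficient assembled from $u$ and its spatial derivatives. Since $\overline{i\,du}=-i\,d\overline{u}$, forming $\theta Ly\cdot\overline{I_1}+\theta\overline{Ly}\cdot I_1$ gives
\[
\theta\bigl(Ly\cdot\overline{I_1}+\overline{Ly}\cdot I_1\bigr)=(I_2+I_1\,dt)\overline{I_1}+(\overline{I_2}+\overline{I_1}\,dt)I_1=2|I_1|^2\,dt+\bigl(I_2\overline{I_1}+\overline{I_2}I_1\bigr),
\]
so the whole content of the theorem reduces to reorganizing $I_2\overline{I_1}+\overline{I_2}I_1$ into the right-hand side of (\ref{5}). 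I would therefore substitute $I_2=i\,du+(u_{xxxx}+C_2u_{xx}+B_1u_x+C_0u+D_0u)\,dt$ and $\overline{I_1}=B_0\overline{u}+C_1\overline{u}_x+B_2\overline{u}_{xx}+C_3\overline{u}_{xxx}$, and handle the stochastic contribution $i\,du\,\overline{I_1}-i\,d\overline{u}\,I_1$ and the drift contribution separately.

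For the stochastic part, the four products $iB_0(\overline{u}\,du-u\,d\overline{u})$, $iC_1(\overline{u}_x\,du-u_x\,d\overline{u})$, $iB_2(\overline{u}_{xx}\,du-u_{xx}\,d\overline{u})$ and $iC_3(\overline{u}_{xxx}\,du-u_{xxx}\,d\overline{u})$ must be rewritten using the It\^o product rule $d(u\overline{v})=\overline{v}\,du+u\,d\overline{v}+du\,d\overline{v}$ together with the fact that $\partial_x$ commutes with $d$. This is exactly what produces the exact differential $dM$, the quadratic-covariation corrections $(du\,d\overline{u}_x-du_x\,d\overline{u})$ and $(du_x\,d\overline{u}_{xx}-du_{xx}\,d\overline{u}_x)$, and the surviving cross terms $(\overline{u}\,du-u\,d\overline{u})[\,\cdots]$ and $(\overline{u}_x\,du_x-u_x\,d\overline{u}_x)[\,\cdots]$. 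Because $\theta$, and hence $C_1,B_2,C_3$, depends on $t$, differentiating $M$ also releases the time-derivative terms $u\overline{u}_x[(-\frac{i}{2})(C_{1t}-B_{2xt}+C_{3xxt})]\,dt$, $u_x\overline{u}[\frac{i}{2}(C_{1t}-B_{2xt}+C_{3xxt})]\,dt$ and $(u_x\overline{u}_{xx}-u_{xx}\overline{u}_x)(\frac{i}{2}C_{3t})\,dt$, all of which must be matched against (\ref{5}).

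For the drift part I would use the Leibniz rule in $x$ (not genuine integration by parts, since the claim is pointwise) to rewrite each product $u^{(j)}\overline{u}^{(k)}\,dt$ as a total $x$-derivative plus a lower-order remainder. The dominant labour is the term $u_{xxxx}\overline{I_1}+\overline{u}_{xxxx}I_1$: peeling off one derivative at a time against $\overline{u}_{xxx},\overline{u}_{xx},\overline{u}_x,\overline{u}$ produces the nested divergences $\{\cdot\}_{xxxx},\{\cdot\}_{xxx},\{\cdot\}_{xx},\{\cdot\}_x$ while simultaneously feeding the real quadratic forms $|u|^2\{\cdot\},\dots,|u_{xxx}|^2\{\cdot\}$. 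The lower-order drift pieces $C_2u_{xx},B_1u_x,C_0u$ against $\overline{I_1}$ contribute the remaining real coefficients, while $D_0u$ paired with $B_0\overline{u},C_1\overline{u}_x,B_2\overline{u}_{xx},C_3\overline{u}_{xxx}$ (and its conjugate) supplies the complex terms $B_0(D_0+\overline{D_0})|u|^2$, $C_1D_0\,u\overline{u}_x$, $C_1\overline{D_0}\,u_x\overline{u}$, and $B_2D_0u\overline{u}_{xx}$, $C_3D_0u\overline{u}_{xxx}$ together with their conjugates.

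The final step is to collect everything and verify, coefficient by coefficient, that the divergence densities assemble into the bracketed expressions listed after (\ref{5}), that the real non-divergence parts collapse into the stated coefficients of $|u|^2,\dots,|u_{xxx}|^2$, and that the stochastic and complex terms reproduce the last lines of the identity. I expect the sole obstacle to be bookkeeping rather than any genuine idea: the coefficients of $|u_x|^2$ and $|u_{xx}|^2$ each gather contributions from several unrelated products (from $u_{xxxx}$ against lower derivatives, from the lower-order drift terms, and from $\partial_x$ of the divergence densities), so the real difficulty is to arrange the Leibniz expansion so that these recombinations are transparent and no term is lost.
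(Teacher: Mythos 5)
Your proposal follows essentially the same route as the paper: both reduce the identity to $\theta(Ly\cdot\overline{I_1}+\overline{Ly}\cdot I_1)=2|I_1|^2dt+I_1\overline{I}_2+\overline{I}_1I_2$ and then expand the bilinear product term by term, using the It\^o product rule for the $i\,du$ contributions (which generates $dM$, the quadratic-covariation corrections, and the $C_{1t},B_{2xt},C_{3xxt},C_{3t}$ terms) and the Leibniz rule in $x$ to peel the drift products into nested divergences plus the real quadratic forms. The only difference is organizational (you traverse the product by terms of $I_2$, the paper by terms of $I_1$), so the proposal is correct and matches the paper's argument.
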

\begin{remark} The similar identity for stochastic second order Schr\"{o}dinger-like operator has been established in \cite{2}.
\end{remark}
\begin{proof}
From the definitions of $I_{1}$ and $I_{2},$ we know
\begin{eqnarray*}
\begin{array}{l}
\begin{array}{llll}
\overline{I}_{1}=\overline{B_{0}}\overline{u}+\overline{C_{1}}\overline{u}_{x}+\overline{B_{2}}\overline{u}_{xx}+\overline{C_{3}}\overline{u}_{xxx},
\\\overline{I}_{2}=-id\overline{u}+(\overline{u}_{xxxx}+\overline{C_{2}}\overline{u}_{xx}+\overline{B_{1}}\overline{u}_{x}+\overline{C_{0}}\overline{u}+\overline{D_{0}}\overline{u})dt.
\end{array}
\end{array}
\end{eqnarray*}
\par
According to
\begin{eqnarray*}
\begin{array}{l}
\begin{array}{llll}
\theta(Ly\cdot \overline{I_{1}}+\overline{Ly}\cdot I_{1})=2|I_{1}|^{2}dt+I_{1}\overline{I}_{2}+\overline{I}_{1}I_{2},
\end{array}
\end{array}
\end{eqnarray*}
we need to compute
\begin{eqnarray*}
I_{1}\overline{I}_{2}+\overline{I}_{1}I_{2}.
\end{eqnarray*}
First, we consider
\begin{eqnarray}\label{1.1}
B_{0}u\overline{I}_{2}+\overline{B_{0}u}I_{2}.
\end{eqnarray}
Each term in (\ref{1.1}) can be computed as follows:
\begin{eqnarray*}
\begin{array}{l}
\begin{array}{llll}
B_{0}u\cdot(-id\overline{u})+idu\cdot \overline{B_{0}}\overline{u}=iB_{0}(\overline{u}du-ud\overline{u}),
\\B_{0}u\cdot \overline{u}_{xxxx}+u_{xxxx}\cdot \overline{B_{0}}\overline{u}=(B_{0}|u|^{2})_{xxxx}-4(B_{0x}|u|^{2})_{xxx}+(6B_{0xx}|u|^{2}-4B_{0}|u_{x}|^{2})_{xx}
\\~~~~~~+(8B_{0x}|u_{x}|^{2}-4B_{0xxx}|u|^{2})_{x}+B_{0xxxx}|u|^{2}-4B_{0xx}|u_{x}|^{2}+2B_{0}|u_{xx}|^{2},
\\B_{0}u\cdot \overline{C_{2}}\overline{u}_{xx}+C_{2}u_{xx}\cdot \overline{B_{0}}\overline{u}=(B_{0}C_{2}|u|^{2})_{xx}-2((B_{0}C_{2})_{x}|u|^{2})_{x}
+(B_{0}C_{2})_{xx}|u|^{2}-2B_{0}C_{2}|u_{x}|^{2},
\\B_{0}u\cdot \overline{B_{1}}\overline{u}_{x}+B_{1}u_{x}\cdot \overline{B_{0}}\overline{u}=(B_{0}B_{1}|u|^{2})_{x}-(B_{0}B_{1})_{x}|u|^{2},
\\B_{0}u\cdot (\overline{C_{0}}\overline{u}+\overline{D_{0}}\overline{u})+(C_{0}u+D_{0}u)\cdot \overline{B_{0}}\overline{u}=[2B_{0}C_{0}+B_{0}(D_{0}+\overline{D_{0}})]|u|^{2}.
\end{array}
\end{array}
\end{eqnarray*}
Simarly, we consider
$C_{1}u_{x}\overline{I}_{2}+\overline{C_{1}u_{x}}I_{2},B_{2}u_{xx}\overline{I}_{2}+\overline{B_{2}u_{xx}}I_{2}$
and
$C_{3}u_{xxx}\overline{I}_{2}+\overline{C_{3}u_{xxx}}I_{2}.$
\par
By a similar argument, calculating each term in $C_{1}u_{x}\overline{I}_{2}+\overline{C_{1}u_{x}}I_{2},B_{2}u_{xx}\overline{I}_{2}+\overline{B_{2}u_{xx}}I_{2}$
and
$C_{3}u_{xxx}\overline{I}_{2}+\overline{C_{3}u_{xxx}}I_{2},$ we obtain
\begin{eqnarray*}
\begin{array}{l}
\begin{array}{llll}
C_{1}u_{x}\cdot (-id\overline{u})+idu\cdot \overline{C_{1}}\overline{u}_{x}=((\frac{-iC_{1}}{2})(ud\overline{u}-\overline{u}du))_{x}-\frac{1}{2}d((-iC_{1})u\overline{u}_{x}-(-iC_{1})u_{x}\overline{u})
\\~~~~~~+\frac{1}{2}(-iC_{1})(dud\overline{u}_{x}-du_{x}d\overline{u})+\frac{1}{2}(-iC_{1x})(\overline{u}du-u d\overline{u})+\frac{1}{2}(-iC_{1t})(u\overline{u}_{x}-u_{x}\overline{u})dt,
\\C_{1}u_{x}\cdot\overline{u}_{xxxx}+u_{xxxx}\cdot \overline{C_{1}}\overline{u}_{x}=(C_{1}|u_{x}|^{2})_{xxx}-3(C_{1x}|u_{x}|^{2})_{xx}+(3C_{1xx}|u_{x}|^{2}-3C_{1}|u_{xx}|^{2})_{x}
\\~~~~~~-C_{1xxx}|u_{x}|^{2}+3C_{1x}|u_{xx}|^{2},
\\C_{1}u_{x}\cdot \overline{C_{2}}\overline{u}_{xx}+C_{2}u_{xx}\cdot \overline{C_{1}}\overline{u}_{x}=(C_{1}C_{2}|u_{x}|^{2})_{x}-(C_{1}C_{2})_{x}|u_{x}|^{2},
\\C_{1}u_{x}\cdot \overline{B_{1}}\overline{u}_{x}+B_{1}u_{x}\cdot \overline{C_{1}}\overline{u}_{x}=2B_{1}C_{1}|u_{x}|^{2},
\\C_{1}u_{x}\cdot \overline{C_{0}}\overline{u}+C_{0}u\cdot \overline{C_{1}}\overline{u}_{x}=(C_{0}C_{1}|u|^{2})_{x}-(C_{0}C_{1})_{x}|u|^{2},
\\C_{1}u_{x}\cdot \overline{D_{0}}\overline{u}+D_{0}u\cdot \overline{C_{1}}\overline{u}_{x}=C_{1}(D_{0}u\overline{u}_{x}+\overline{D_{0}}u_{x}\overline{u}),
\\
B_{2}u_{xx}\cdot (-id\overline{u})+idu\cdot\overline{B_{2}}\overline{u}_{xx}=[(-iB_{2})(u_{x}d\overline{u}-\overline{u}_{x}du)-\frac{1}{2}(-iB_{2x})(ud\overline{u}-\overline{u}du)]_{x}
\\~~~~~~+\frac{1}{2}d[(-iB_{2x})(u\overline{u}_{x}-u_{x}\overline{u})]-\frac{1}{2}(-iB_{2x})(dud\overline{u}_{x}-du_{x}d\overline{u})-\frac{1}{2}(-iB_{2xx})(\overline{u}du-ud\overline{u})
\\~~~~~~-\frac{1}{2}(-iB_{2xt})(\overline{u}_{x}u-u_{x}\overline{u})dt-(-iB_{2})(u_{x}d\overline{u}_{x}-\overline{u}_{x}d u_{x}),
\\B_{2}u_{xx}\cdot\overline{u}_{xxxx}+u_{xxxx}\cdot\overline{B_{2}}\overline{u}_{xx}=(B_{2}|u_{xx}|^{2})_{xx}-2(B_{2x}|u_{xx}|^{2})_{x}+B_{2xx}|u_{xx}|^{2}-2B_{2}|u_{xxx}|^{2},
\\B_{2}u_{xx}\cdot \overline{C_{2}}\overline{u}_{xx}+C_{2}u_{xx}\cdot \overline{B_{2}}\overline{u}_{xx}=2B_{2}C_{2}|u_{xx}|^{2},
\\B_{2}u_{xx}\cdot \overline{B_{1}}\overline{u}_{x}+B_{1}u_{x}\cdot \overline{B_{2}}\overline{u}_{xx}=(B_{1}B_{2}|u_{x}|^{2})_{x}-(B_{1}B_{2})_{x}|u_{x}|^{2},
\\B_{2}u_{xx}\cdot \overline{C_{0}}\overline{u}+C_{0}u\cdot \overline{B_{2}}\overline{u}_{xx}=(B_{2}C_{0}|u|^{2})_{xx}-2((B_{2}C_{0})_{x}|u|^{2})_{x}+(B_{2}C_{0})_{xx}|u|^{2}-2B_{2}C_{0}|u_{x}|^{2},
\\B_{2}u_{xx}\cdot \overline{D_{0}}\overline{u}+D_{0}u\cdot \overline{B_{2}}\overline{u}_{xx}=B_{2}(D_{0}\overline{u}_{xx}u+\overline{D_{0}}\overline{u}u_{xx}).
\\
C_{3}u_{xxx}\cdot (-id\overline{u})+idu\cdot\overline{C_{3}}\overline{u}_{xxx}=[(-iC_{3})(u_{xx}d\overline{u}-\overline{u}_{xx}du)-(-iC_{3x})(u_{x}d\overline{u}-\overline{u}_{x}du)
\\~~~~~~+\frac{1}{2}(-iC_{3xx})(ud\overline{u}-\overline{u}du)-\frac{1}{2}(-iC_{3})(u_{x}d\overline{u}_{x}-\overline{u}_{x}du_{x})]_{x}
\\~~~~~~+\frac{1}{2}d[(-iC_{3})(u_{x}\overline{u}_{xx}-\overline{u}_{x}u_{xx})-(-iC_{3})_{xx}(u\overline{u}_{x}-\overline{u}u_{x})]-\frac{1}{2}(-iC_{3})(du_{x}d\overline{u}_{xx}-du_{xx}d\overline{u}_{x})
\\~~~~~~-\frac{3}{2}(-iC_{3})_{x}(\overline{u}_{x}du_{x}-u_{x}d\overline{u}_{x})+\frac{1}{2}(-iC_{3})_{xx}(dud\overline{u}_{x}-du_{x}d\overline{u})
+\frac{1}{2}(-iC_{3})_{xxx}(\overline{u}du-ud\overline{u})
\\~~~~~~+\frac{1}{2}(-iC_{3})_{xxt}(u\overline{u}_{x}-u_{x}\overline{u})dt-\frac{1}{2}(-iC_{3})_{t}(u_{x}\overline{u}_{xx}-u_{xx}\overline{u}_{x})dt,
\\C_{3}u_{xxx}\cdot\overline{u}_{xxxx}+u_{xxxx}\cdot\overline{C_{3}}\overline{u}_{xxx}=(C_{3}|u_{xxx}|^{2})_{x}-C_{3x}|u_{xxx}|^{2},
\\C_{3}u_{xxx}\cdot \overline{C_{2}}\overline{u}_{xx}+C_{2}u_{xx}\cdot \overline{C_{3}}\overline{u}_{xxx}=(C_{2}C_{3}|u_{xx}|^{2})_{x}-(C_{2}C_{3})_{x}|u_{xx}^{2}|,
\\C_{3}u_{xxx}\cdot \overline{B_{1}}\overline{u}_{x}+B_{1}u_{x}\cdot \overline{C_{3}}\overline{u}_{xxx}=(B_{1}C_{3}|u_{x}|^{2})_{xx}-2((B_{1}C_{3})_{x}|u_{x}|^{2})_{x}
+(B_{1}C_{3})_{xx}|u_{x}|^{2}-2B_{1}C_{3}|u_{xx}|^{2},
\\C_{3}u_{xxx}\cdot \overline{C_{0}}\overline{u}+C_{0}u\cdot\overline{C_{3}}\overline{u}_{xxx}=(C_{0}C_{3}|u|^{2})_{xxx}-3((C_{0}C_{3})_{x}|u|^{2})_{xx}+(3(C_{0}C_{3})_{xx}|u|^{2}-3C_{0}C_{3}|u_{x}|^{2})_{x}
\\~~~~~~-(C_{0}C_{3})_{xxx}|u|^{2}+3(C_{0}C_{3})_{x}|u_{x}|^{2},
\\C_{3}u_{xxx}\cdot \overline{D_{0}}\overline{u}+D_{0}u\cdot \overline{C_{3}}\overline{u}_{xxx}=C_{3}(D_{0}\overline{u}_{xxx}u+\overline{D_{0}}\overline{u}u_{xxx}).
\end{array}
\end{array}
\end{eqnarray*}
Taking into account the above equations, we obtain the following equation
\begin{eqnarray*}
\begin{array}{l}
\begin{array}{llll}
I_{1}\overline{I}_{2}+\overline{I}_{1}I_{2}
\\=|u|^{2}\{B_{0xxxx}+(B_{0}C_{2})_{xx}-(B_{0}B_{1})_{x}+2B_{0}C_{0}+B_{0}(D_{0}+\overline{D_{0}})-(C_{0}C_{1})_{x}
\\~~~~~~~~~~~+(B_{2}C_{0})_{xx}-(C_{0}C_{3})_{xxx}\}dt
\\+|u_{x}|^{2}\{-4B_{0xx}-2B_{0}C_{2}-C_{1xxx}-(C_{1}C_{2})_{x}+2B_{1}C_{1}-(B_{1}B_{2})_{x}-2B_{2}C_{0}+(B_{1}C_{3})_{xx}
\\~~~~~~~~~~~+3(C_{0}C_{3})_{x}\}dt
\\+|u_{xx}|^{2}\{2B_{0}+3C_{1x}+B_{2x}+2B_{2}C_{2}-(C_{2}C_{3})_{x}-2B_{1}C_{3})_{xx}\}dt
\\+|u_{xxx}|^{2}\{-2B_{2}-C_{3x}\}dt
\\+\{\cdot\}_{x}dt+\{\cdot\}_{xx}dt+\{\cdot\}_{xxx}dt+\{\cdot\}_{xxxx}dt
\\+\frac{i}{2}d\{(C_{1}-B_{2x}+C_{3xx})(u\overline{u}_{x}-u_{x}\overline{u})-C_{3}(u_{x}\overline{u}_{xx}-u_{xx}\overline{u}_{x})\}
\\+(\overline{u}du-ud\overline{u})(i(B_{0}-\frac{1}{2}C_{1x}+\frac{1}{2}B_{2xx}-\frac{1}{2}C_{3xxx}))+(\overline{u}_{x}du_{x}-u_{x}d\overline{u}_{x})(i(\frac{3}{2}C_{3x}-B_{2}))
\\+(dud\overline{u}_{x}-du_{x}d\overline{u})(-\frac{i}{2})(C_{1}-B_{2x}+C_{3xx})+(du_{x}d\overline{u}_{xx}-du_{xx}d\overline{u}_{x})(\frac{i}{2}C_{3})
\\+(u\overline{u}_{x}-u_{x}\overline{u})(-\frac{i}{2})(C_{1t}-B_{2xt}+C_{3xxt})dt+(u_{x}\overline{u}_{xx}-u_{xx}\overline{u}_{x})(\frac{i}{2}C_{3t})dt
\\+C_{1}D_{0}u\overline{u}_{x}dt+C_{1}\overline{D_{0}}u_{x}\overline{u}dt+B_{2}D_{0}u\overline{u}_{xx}dt+B_{2}\overline{D_{0}}u_{xx}\overline{u}dt
+C_{3}D_{0}u\overline{u}_{xxx}dt+C_{3}\overline{D_{0}}u_{xxx}\overline{u}dt,
\end{array}
\end{array}
\end{eqnarray*}
this implies (\ref{5}).
\end{proof}
Direct computation shows that
\begin{eqnarray}\label{19}
\theta(idy+y_{xxxx}dt)=idu+(A_{0}u+A_{1}u_{x}+A_{2}u_{xx}+A_{3}u_{xxx}+u_{xxxx})dt,
\end{eqnarray}
where
\begin{eqnarray*}
\begin{array}{l}
\begin{array}{llll}
A_{0}=l_{x}^{4}+4l_{x}l_{xxx}-l_{xxxx}-6l_{x}^{2}l_{xx}+3l_{xx}^{2}-il_{t},
\\A_{1}=-4l_{x}^{3}+12l_{x}l_{xx}-4l_{xxx},
\\A_{2}=6l_{x}^{2}-6l_{xx},
\\A_{3}=-4l_{x}.
\end{array}
\end{array}
\end{eqnarray*}
We have the following corollary.
\begin{corollary}\label{C3}
Let
\begin{eqnarray*}
\begin{array}{l}
\begin{array}{llll}
B_{1}=8l_{x}l_{xx}-4l_{xxx}, ~~B_{2}=-6l_{xx},~~C_{0}=l_{x}^{4}+2l_{x}l_{xxx}-2l_{xxxx}+l_{xx}^{2},
\\C_{1}=-4l_{x}^{3}+4l_{x}l_{xx},~~C_{2}=6l_{x}^{2},~~C_{3}=-4l_{x},~~D_{0}=-il_{t}.
\end{array}
\end{array}
\end{eqnarray*}
We have the following inequality
\begin{eqnarray}\label{10}
\begin{array}{l}
\begin{array}{llll}
\displaystyle E\int_{Q}\Big\{(|u|^{2}\{\cdot\}+|u_{x}|^{2}\{\cdot\}+|u_{xx}|^{2}\{\cdot\}+|u_{xxx}|^{2}\{\cdot\})dt \\~~~~~~~~~~~~~~~~~~~+(\{\cdot\}_{x}+\{\cdot\}_{xx}+\{\cdot\}_{xxx}+\{\cdot\}_{xxxx})dt+dM\Big\}dx
\\\leq-\displaystyle E\int_{Q}\Big\{u\overline{u}_{x}[(-\frac{i}{2})(C_{1t}-B_{2xt}+C_{3xxt})+C_{1}D_{0}]dt
\\~~~~~~~~~~~~+u_{x}\overline{u}[\frac{i}{2}(C_{1t}-B_{2xt}+C_{3xxt})+C_{1}\overline{D_{0}}]dt+(u_{x}\overline{u}_{xx}-u_{xx}\overline{u}_{x})(\frac{i}{2}C_{3t})dt
\\~~~~~~~~~~~~+B_{2}D_{0}u\overline{u}_{xx}dt+B_{2}\overline{D_{0}}u_{xx}\overline{u}dt
+C_{3}D_{0}u\overline{u}_{xxx}dt+C_{3}\overline{D_{0}}u_{xxx}\overline{u}dt
\\~~~~~~~~~~~~+(dud\overline{u}_{x}-du_{x}d\overline{u})(-\frac{i}{2})(C_{1}-B_{2x}+C_{3xx})+(du_{x}d\overline{u}_{xx}-du_{xx}d\overline{u}_{x})(\frac{i}{2}C_{3})\Big\}dx
\\+\displaystyle E\int_{Q}\theta^{2}|f|^{2}dxdt,
\end{array}
\end{array}
\end{eqnarray}
where $|u|^{2}\{\cdot\},|u_{x}|^{2}\{\cdot\},|u_{xx}|^{2}\{\cdot\},|u_{xxx}|^{2}\{\cdot\} ,\{\cdot\}_{x},\{\cdot\}_{xx},\{\cdot\}_{xxx},\{\cdot\}_{xxxx}$ and $M$ are the same as in Theorem \ref{T2}.
\end{corollary}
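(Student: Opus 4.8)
The plan is to apply Theorem \ref{T2} with the listed coefficients to a (sufficiently regular) solution $y$ of (\ref{1}), so that $Ly=f\,dt+g\,dw$, integrate the pointwise identity (\ref{5}) over $Q$, take expectation, and rearrange. First I would check that the listed coefficients are exactly those forced by the decomposition $\theta Ly=I_2+I_1\,dt$ of Theorem \ref{T2}: comparing with (\ref{19}) requires $C_3=A_3$, $C_2+B_2=A_2$, $C_1+B_1=A_1$ and $B_0=A_0-C_0-D_0$. A direct computation gives $B_0=l_{xxxx}+2l_xl_{xxx}+2l_{xx}^2-6l_x^2l_{xx}$, which is real as Theorem \ref{T2} demands, while $D_0=-il_t$ carries the only imaginary part; hence Theorem \ref{T2} applies and (\ref{5}) holds pointwise.

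The crux is that these coefficients are tuned so that the two ``first order in $du$'' terms of (\ref{5}) drop out. Their coefficients are $B_0-\tfrac12C_{1x}+\tfrac12B_{2xx}-\tfrac12C_{3xxx}$ and $\tfrac32C_{3x}-B_2$, and substituting the explicit $B_0,C_1,B_2,C_3$ one finds
\begin{eqnarray*}
B_0-\tfrac12C_{1x}+\tfrac12B_{2xx}-\tfrac12C_{3xxx}=0,\qquad \tfrac32C_{3x}-B_2=0.
\end{eqnarray*}
Thus $(\overline{u}du-ud\overline{u})[i(B_0-\tfrac12C_{1x}+\tfrac12B_{2xx}-\tfrac12C_{3xxx})]$ and $(\overline{u}_xdu_x-u_xd\overline{u}_x)[i(\tfrac32C_{3x}-B_2)]$ vanish identically, which is precisely why they are absent from the right-hand side of (\ref{10}). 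The agreement of the $B_0$ coming from $A_0-C_0-D_0$ with the one forced by $\alpha_1=0$ is the consistency check confirming the coefficient choice is the right one.

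With those two terms gone, rearranging (\ref{5}) expresses the integrand on the left of (\ref{10}), namely $(|u|^2\{\cdot\}+\cdots)dt+(\{\cdot\}_x+\cdots)dt+dM$, as $\theta(Ly\cdot\overline{I_1}+\overline{Ly}\cdot I_1)-2|I_1|^2\,dt$ minus the surviving lower-order terms (the $u\overline{u}_x$, $u_x\overline{u}$, $u_x\overline{u}_{xx}-u_{xx}\overline{u}_x$, $u\overline{u}_{xx}$, $u\overline{u}_{xxx}$ couplings and the two It\^o-correction terms $dud\overline{u}_x-du_xd\overline{u}$ and $du_xd\overline{u}_{xx}-du_{xx}d\overline{u}_x$), exactly the ones retained on the right of (\ref{10}). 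I would then integrate over $Q$ and take $E$. Using $\theta Ly=\theta f\,dt+\theta g\,dw$, the left contribution becomes $E\int_Q 2\,\mathrm{Re}(\theta f\,\overline{I_1})\,dx\,dt$, since the stochastic integral $E\int_Q(\theta g\,\overline{I_1}+\theta\overline{g}\,I_1)\,dx\,dw$ vanishes by the martingale property, $\overline{I_1}$ being $\{\mathcal{F}_t\}$-adapted. Finally Cauchy's inequality $2\,\mathrm{Re}(\theta f\,\overline{I_1})\le|I_1|^2+\theta^2|f|^2$ combines with the $-2|I_1|^2$ already present to give $-|I_1|^2+\theta^2|f|^2$; dropping the nonpositive $-|I_1|^2$ yields (\ref{10}).

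The genuinely delicate point is the algebraic cancellation of the second step: one must verify that, with the prescribed $B_0,\dots,C_3$, both coefficients vanish identically (not merely in expectation), since this is what removes those terms without producing boundary or interior remainders. The remaining steps are routine bookkeeping plus one use of the equation and Cauchy's inequality. A minor technical caveat is that Theorem \ref{T2} is stated for $H^4$-valued semimartingales, so strictly one applies it to regularized data and passes to the limit via Proposition \ref{P1}.
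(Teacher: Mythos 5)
Your proposal is correct and follows essentially the same route as the paper: apply Theorem \ref{T2} with these coefficients, observe that $B_{0}-\tfrac12C_{1x}+\tfrac12B_{2xx}-\tfrac12C_{3xxx}=0$ and $\tfrac32C_{3x}-B_{2}=0$ so the $(\overline{u}du-ud\overline{u})$ and $(\overline{u}_{x}du_{x}-u_{x}d\overline{u}_{x})$ terms drop, then use $\theta Ly=\theta f\,dt+\theta g\,dw$, the vanishing in expectation of the stochastic integral, and Cauchy's inequality against the $2|I_{1}|^{2}$ term. The only cosmetic difference is the constant in the Cauchy step (the paper absorbs exactly $2|I_{1}|^{2}$ and keeps $\tfrac12\theta^{2}|f|^{2}$, while you keep $|I_{1}|^{2}+\theta^{2}|f|^{2}$ and discard the leftover $-|I_{1}|^{2}$), which is immaterial.
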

\begin{proof}
\par
It follows from (\ref{5}) and the fact
\begin{eqnarray*}
\begin{array}{l}
\begin{array}{llll}
B_{0}-\frac{1}{2}C_{1x}+\frac{1}{2}B_{2xx}-\frac{1}{2}C_{3xxx}=0
\\B_{2}-\frac{3}{2}C_{3x}=0
\end{array}
\end{array}
\end{eqnarray*}
that
\begin{eqnarray*}
\begin{array}{l}
\begin{array}{llll}
\displaystyle E\int_{Q}\theta(Ly\cdot \overline{I_{1}}+\overline{Ly}\cdot I_{1})dx
\\=\displaystyle E\int_{Q}\Big\{2|I_{1}|^{2}dt+(|u|^{2}\{\cdot\}+|u_{x}|^{2}\{\cdot\}+|u_{xx}|^{2}\{\cdot\}+|u_{xxx}|^{2}\{\cdot\})dt
\\+(\{\cdot\}_{x}+\{\cdot\}_{xx}+\{\cdot\}_{xxx}+\{\cdot\}_{xxxx})dt+dM
\\+(dud\overline{u}_{x}-du_{x}d\overline{u})(-\frac{i}{2})(C_{1}-B_{2x}+C_{3xx})+(du_{x}d\overline{u}_{xx}-du_{xx}d\overline{u}_{x})(\frac{i}{2}C_{3})
\\+u\overline{u}_{x}[(-\frac{i}{2})(C_{1t}-B_{2xt}+C_{3xxt})+C_{1}D_{0}]dt
\\+u_{x}\overline{u}[\frac{i}{2}(C_{1t}-B_{2xt}+C_{3xxt})+C_{1}\overline{D_{0}}]dt+(u_{x}\overline{u}_{xx}-u_{xx}\overline{u}_{x})(\frac{i}{2}C_{3t})dt
\\+B_{2}D_{0}u\overline{u}_{xx}dt+B_{2}\overline{D_{0}}u_{xx}\overline{u}dt
+C_{3}D_{0}u\overline{u}_{xxx}dt+C_{3}\overline{D_{0}}u_{xxx}\overline{u}dt\Big\}dx,
\end{array}
\end{array}
\end{eqnarray*}
thus
\begin{eqnarray*}
\begin{array}{l}
\begin{array}{llll}
\displaystyle E\int_{Q}\Big\{2|I_{1}|^{2}dt+(|u|^{2}\{\cdot\}+|u_{x}|^{2}\{\cdot\}+|u_{xx}|^{2}\{\cdot\}+|u_{xxx}|^{2}\{\cdot\})dt
\\~~~~~~~~~~~+(\{\cdot\}_{x}+\{\cdot\}_{xx}+\{\cdot\}_{xxx}+\{\cdot\}_{xxxx})dt+dM\Big\}dx
\\=-\displaystyle E\int_{Q}\Big\{
(dud\overline{u}_{x}-du_{x}d\overline{u})(-\frac{i}{2})(C_{1}-B_{2x}+C_{3xx})+(du_{x}d\overline{u}_{xx}-du_{xx}d\overline{u}_{x})(\frac{i}{2}C_{3})
\\~~~~~~~~~~~~+u\overline{u}_{x}[(-\frac{i}{2})(C_{1t}-B_{2xt}+C_{3xxt})+C_{1}D_{0}]dt
\\~~~~~~~~~~~~+u_{x}\overline{u}[\frac{i}{2}(C_{1t}-B_{2xt}+C_{3xxt})+C_{1}\overline{D_{0}}]dt+(u_{x}\overline{u}_{xx}-u_{xx}\overline{u}_{x})(\frac{i}{2}C_{3t})dt
\\~~~~~~~~~~~~+B_{2}D_{0}u\overline{u}_{xx}dt+B_{2}\overline{D_{0}}u_{xx}\overline{u}dt+C_{3}D_{0}u\overline{u}_{xxx}dt+C_{3}\overline{D_{0}}u_{xxx}\overline{u}dt\Big\}dx
\\~~~~+\displaystyle E\int_{Q}\theta(Ly\cdot \overline{I_{1}}+\overline{Ly}\cdot I_{1})dx.
\end{array}
\end{array}
\end{eqnarray*}
Noting
\begin{eqnarray*}
\begin{array}{l}
\begin{array}{llll}
\displaystyle E\int_{Q}\theta(Ly\cdot \overline{I_{1}}+\overline{Ly}\cdot I_{1})dx=\displaystyle E\int_{Q}\theta((fdt+gdw)\cdot \overline{I_{1}}+\overline{(fdt+gdw)}\cdot I_{1})dx
\\=\displaystyle E\int_{Q}\theta(f\overline{I_{1}}+\overline{f}I_{1})dxdt\leq 2E\int_{Q}|I_{1}|^{2}dxdt+\frac{1}{2}E\int_{Q}\theta^{2}|f|^{2}dxdt,
\end{array}
\end{array}
\end{eqnarray*}
we can obtain (\ref{10}).
\end{proof}
\section{Proof of Theorem \ref{T1}, Corollary \ref{C4} and Corollary \ref{C1}}
\subsection{Proof of Theorem \ref{T1}}
\par
\textbf{Step 1.}
 We shall prove the following estimate
\begin{eqnarray}\label{12}
\begin{array}{l}
\begin{array}{llll}
~~~\displaystyle E\int_{Q}(\lambda\widehat{\varphi}
\widehat{\theta}^{2}|y_{xxx}|^{2}+\lambda^{3}\widehat{\varphi}^{3}\widehat{\theta}^{2}|y_{xx}|^{2}+\lambda^{5}\widehat{\varphi}^{5}\widehat{\theta}^{2}|y_{x}|^{2}+\lambda^{7}\widehat{\varphi}^{7}\widehat{\theta}^{2}|y|^{2})dxdt
\\\leq\displaystyle
C[E\int_{Q^{I_{0}}}(\lambda\widehat{\varphi}
\widehat{\theta}^{2}|y_{xxx}|^{2}+\lambda^{3}\widehat{\varphi}^{3}\widehat{\theta}^{2}|y_{xx}|^{2}+\lambda^{5}\widehat{\varphi}^{5}\widehat{\theta}^{2}|y_{x}|^{2}+\lambda^{7}\widehat{\varphi}^{7}\widehat{\theta}^{2}|y|^{2}
)dxdt
\\~~~~~~~~~~+\displaystyle E\int_{Q}(\lambda^{4}\widehat{\varphi}^{4}\widehat{\theta}^{2}|g|^{2}
+\lambda^{2}\widehat{\varphi}^{2}\widehat{\theta}^{2}|g_{x}|^{2}+\widehat{\theta}^{2}|g_{xx}|^{2}+\widehat{\theta}^{2}|f|^{2})dxdt].
\end{array}
\end{array}
\end{eqnarray}
\par
Indeed,
applying Corollary \ref{C3} with $l=\widehat{l},$ then we have $\theta=\widehat{\theta},u=\widehat{u}=\widehat{\theta}y$
and
\begin{eqnarray}\label{1.2}
\begin{array}{l}
\begin{array}{llll}
\displaystyle E\int_{Q}\Big\{(|\widehat{u}|^{2}\{\cdot\}+|\widehat{u}_{x}|^{2}\{\cdot\}+|\widehat{u}_{xx}|^{2}\{\cdot\}+|\widehat{u}_{xxx}|^{2}\{\cdot\})dt \\~~~~~~~~~~~~~~~~~~~+(\{\cdot\}_{x}+\{\cdot\}_{xx}+\{\cdot\}_{xxx}+\{\cdot\}_{xxxx})dt+d\widehat{M}\Big\}dx
\\\leq-\displaystyle E\int_{Q}\{\widehat{u}\overline{\widehat{u}}_{x}[(-\frac{i}{2})(C_{1t}-B_{2xt}+C_{3xxt})+C_{1}D_{0}]dt
\\~~~~~~~~~~~~+\widehat{u}_{x}\overline{\widehat{u}}[\frac{i}{2}(C_{1t}-B_{2xt}+C_{3xxt})+C_{1}\overline{D_{0}}]dt+(\widehat{u}_{x}\overline{\widehat{u}}_{xx}-\widehat{u}_{xx}\overline{\widehat{u}}_{x})(\frac{i}{2}C_{3t})dt
\\~~~~~~~~~~~~+B_{2}D_{0}\widehat{u}\overline{\widehat{u}}_{xx}dt+B_{2}\overline{D_{0}}\widehat{u}_{xx}\overline{\widehat{u}}dt
+C_{3}D_{0}\widehat{u}\overline{\widehat{u}}_{xxx}dt+C_{3}\overline{D_{0}}\widehat{u}_{xxx}\overline{\widehat{u}}dt
\\~~~~~~~~~~~~+(d\widehat{u}d\overline{\widehat{u}}_{x}-d\widehat{u}_{x}d\overline{\widehat{u}})(-\frac{i}{2})(C_{1}-B_{2x}+C_{3xx})+(d\widehat{u}_{x}d\overline{\widehat{u}}_{xx}-d\widehat{u}_{xx}d\overline{\widehat{u}}_{x})(\frac{i}{2}C_{3})\}dx
\\+\displaystyle E\int_{Q}\widehat{\theta}^{2}|f|^{2}dxdt,
\end{array}
\end{array}
\end{eqnarray}
where $\{\cdot\}_{x},\{\cdot\}_{xx},\{\cdot\}_{xxx},\{\cdot\}_{xxxx}$ and $\widehat{M}$ are the same as in Theorem \ref{T2} with $u=\widehat{u}.$
\par
By the definitions of $\widehat{a},\widehat{\varphi},\widehat{\psi},$ it is
obvious that for $n\in \mathbb{N}$
\begin{equation*}
\begin{array}{l}
\begin{array}{llll}
|\partial_{x}^{n}\widehat{a}|\leq C(\widehat{\psi})\mu^{n}\widehat{\varphi},
~~|\partial_{x}^{n}\widehat{a}_{t}|\leq C(\widehat{\psi})T\mu^{n} \widehat{\varphi}^{2},
\\|\widehat{a}_{t}|\leq C T\widehat{\varphi}^{2},
~~|\widehat{a}_{tt}|\leq C T^{2}\widehat{\varphi}^{3}.
\end{array}
\end{array}
\end{equation*}
Observe that $\widehat{\varphi}\leq \frac{T^{2}}{4}\widehat{\varphi}^{2}\leq
\frac{T^{4}}{16}\widehat{\varphi}^{3}\leq \frac{T^{6}}{64}\widehat{\varphi}^{4}\leq
\frac{T^{8}}{256}\widehat{\varphi}^{5}\leq \frac{T^{10}}{1024}\widehat{\varphi}^{6}.$
\par
For the term $|\widehat{u}|^{2}\{\cdot\}$ in (\ref{1.2}), if we choose
$\lambda\geq \mu C(\widehat{\psi})(T+T^{2})$ with $C(\widehat{\psi})$ large enough, then
it holds that
\begin{eqnarray*}
\begin{array}{l}
\begin{array}{llll}
B_{0xxxx}+(B_{0}C_{2})_{xx}-(B_{0}B_{1})_{x}+2B_{0}C_{0}+B_{0}(D_{0}+\overline{D_{0}})-(C_{0}C_{1})_{x}
\\~~~~~~~~~~~+(B_{2}C_{0})_{xx}-(C_{0}C_{3})_{xxx}
\\=16\lambda^{7}\mu^{8}\widehat{\varphi}^{7}\widehat{\psi}_{x}^{8}+\widehat{R}_{0},
\end{array}
\end{array}
\end{eqnarray*}
where $|\widehat{R}_{0}|\leq C(\widehat{\psi})\lambda^{7}\mu^{7}\widehat{\varphi}^{7}.$
\par Namely
\begin{eqnarray}\label{6}
|\widehat{u}|^{2}\{\cdot\}=16\lambda^{7}\mu^{8}\widehat{\varphi}^{7}\widehat{\psi}_{x}^{8}|\widehat{u}|^{2}+\widehat{R}_{0}|\widehat{u}|^{2}.
\end{eqnarray}
 Using the
same method, we can obtain that
\begin{eqnarray}\label{8}
\begin{array}{l}
\begin{array}{llll}
|\widehat{u}_{x}|^{2}\{\cdot\}=80\lambda^{5}\mu^{6}\widehat{\varphi}^{5}\widehat{\psi}_{x}^{6}|\widehat{u}_{x}|^{2}+\widehat{R}_{1}|\widehat{u}_{x}|^{2},
\\|\widehat{u}_{xx}|^{2}\{\cdot\}=16\lambda^{3}\mu^{4}\widehat{\varphi}^{3}\widehat{\psi}_{x}^{4}|\widehat{u}_{xx}|^{2}+\widehat{R}_{2}|\widehat{u}_{xx}|^{2},
\\|\widehat{u}_{xxx}|^{2}\{\cdot\}=16\lambda\mu^{2}\widehat{\varphi}\widehat{\psi}_{x}^{2}|\widehat{u}_{xxx}|^{2}+\widehat{R}_{3}|\widehat{u}_{xxx}|^{2},
\end{array}
\end{array}
\end{eqnarray}
where
\begin{eqnarray*}
\begin{array}{l}
\begin{array}{llll}
|\widehat{R}_{1}|\leq C\lambda^{5}\mu^{5}\widehat{\varphi}^{5},
\\|\widehat{R}_{2}|\leq C\lambda^{3}\mu^{3}\widehat{\varphi}^{3},
\\|\widehat{R}_{3}|\leq C\lambda\mu\widehat{\varphi}.
\end{array}
\end{array}
\end{eqnarray*}
\par
Now, we estimate the term
$E\displaystyle\int_{Q}(\{\cdot\}_{x}+\{\cdot\}_{xx}+\{\cdot\}_{xxx}+\{\cdot\}_{xxxx})dxdt$
in (\ref{1.2}).
\par
Indeed, noting that
$y(0,t)=y(1,t)=y_{x}(0,t)=y_{x}(1,t)=0,$ we have
\begin{eqnarray*}\widehat{u}(0,t)=\widehat{u}(1,t)=\widehat{u}_{x}(0,t)=\widehat{u}_{x}(1,t)=0~~~\forall~t\in(0,T).\end{eqnarray*}
Thus
\begin{eqnarray*}
\begin{array}{l}
\begin{array}{llll}
~~~\displaystyle E\int_{Q}(\{\cdot\}_{x}+\{\cdot\}_{xx}+\{\cdot\}_{xxx}+\{\cdot\}_{xxxx})dxdt
\\=\displaystyle E\int_{Q}\{|\widehat{u}_{xx}|^{2}(-20\lambda^{3}\mu^{3}\widehat{\varphi}^{3}\widehat{\psi}^{3}_{x}+\widehat{r}_{1})+|\widehat{u}_{xxx}|^{2}(-4\lambda\mu\widehat{\varphi}\widehat{\psi}_{x})
\\
~~~+\widehat{u}_{xxx}\overline{\widehat{u}}_{xx}(-6\lambda\mu^{2}\widehat{\varphi}\widehat{\psi}^{2}_{x}+\widehat{r}_{2})+\widehat{u}_{xx}\overline{\widehat{u}}_{xxx}(-6\lambda\mu^{2}\widehat{\varphi}\widehat{\psi}^{2}_{x}+\widehat{r}_{2})\}_{x}dxdt
\\\triangleq \widehat{V}(1)-\widehat{V}(0),

\end{array}
\end{array}
\end{eqnarray*}
where
\begin{eqnarray*}
\begin{array}{l}
\begin{array}{llll}
|\widehat{r}_{1}|\leq C\lambda^{2}\mu^{3}\widehat{\varphi}^{2},
\\|\widehat{r}_{2}|\leq C\lambda\mu\widehat{\varphi}.
\end{array}
\end{array}
\end{eqnarray*}
It holds that for any
$\varepsilon>0,$ if we choose $\lambda\geq \mu
C(\varepsilon,\widehat{\psi})(T+T^{2})$  with $C(\varepsilon,\widehat{\psi})$ large
enough, then
\begin{eqnarray*}
\begin{array}{l}
\begin{array}{llll}
|\widehat{u}_{xxx}\overline{\widehat{u}}_{xx}(-6\lambda\mu^{2}\widehat{\varphi}\widehat{\psi}^{2}_{x}+\widehat{r}_{2})(1,t)+\widehat{u}_{xx}\overline{\widehat{u}}_{xxx}(-6\lambda\mu^{2}\widehat{\varphi}\widehat{\psi}^{2}_{x}+\widehat{r}_{2})(1,t)|
\\\leq \varepsilon \lambda^{3}\mu^{3}\widehat{\varphi}^{3}(1,t)|\widehat{u}_{xxx}(1,t)|^{2}+\varepsilon \lambda\mu\widehat{\varphi}(1,t)|\widehat{u}_{xx}(1,t)|^{2},
\\|\widehat{u}_{xxx}\overline{\widehat{u}}_{xx}(-6\lambda\mu^{2}\widehat{\varphi}\widehat{\psi}^{2}_{x}+\widehat{r}_{2})(0,t)+\widehat{u}_{xx}\overline{\widehat{u}}_{xxx}(-6\lambda\mu^{2}\widehat{\varphi}\widehat{\psi}^{2}_{x}+\widehat{r}_{2})(0,t)|
\\\leq \varepsilon \lambda^{3}\mu^{3}\widehat{\varphi}^{3}(0,t)|\widehat{u}_{xxx}(0,t)|^{2}+\varepsilon \lambda\mu\widehat{\varphi}(0,t)|\widehat{u}_{xx}(0,t)|^{2}.
\end{array}
\end{array}
\end{eqnarray*}
Note that $\widehat{\psi}_{x}(1)<0,~\widehat{\psi}_{x}(0)>0,$ if we choose $\varepsilon$
small sufficiently and $\lambda\geq \mu
C(\varepsilon,\widehat{\psi})(T+T^{2}),$ then there exist positive constants
$N_{1},N_{2},K_{1},K_{2}$ such that
\begin{eqnarray*}
\begin{array}{l}
\begin{array}{llll}
\widehat{V}(1)=\displaystyle E\int_{0}^{T}[|\widehat{u}_{xx}|^{2}(-20\lambda^{3}\mu^{3}\widehat{\varphi}^{3}\widehat{\psi}^{3}_{x}+\widehat{r}_{1})+|\widehat{u}_{xxx}|^{2}(-4\lambda\mu\widehat{\varphi}\widehat{\psi}_{x})
\\
~~~~~~+\widehat{u}_{xxx}\overline{\widehat{u}}_{xx}(-6\lambda\mu^{2}\widehat{\varphi}\widehat{\psi}^{2}_{x}+\widehat{r}_{2})+\widehat{u}_{xx}\overline{\widehat{u}}_{xxx}(-6\lambda\mu^{2}\widehat{\varphi}\widehat{\psi}^{2}_{x}+\widehat{r}_{2})](1,t)dt
\\~~~~~~\geq E\displaystyle
\int_{0}^{T}(-N_{1}\lambda^{3}\mu^{3}\widehat{\varphi}^{3}(1,t)\widehat{\psi}^{3}_{x}(1)|\widehat{u}_{xx}(1 ,t)|^{2}-K_{1}\lambda\mu\widehat{\varphi}(1,t)\widehat{\psi}_{x}(1)|\widehat{u}_{xxx}(1,t)|^{2})dt\\
~~~~~~\geq0,\\
\widehat{V}(0)=\displaystyle E\int_{0}^{T}[|\widehat{u}_{xx}|^{2}(-20\lambda^{3}\mu^{3}\widehat{\varphi}^{3}\widehat{\psi}^{3}_{x}+\widehat{r}_{1})+|\widehat{u}_{xxx}|^{2}(-4\lambda\mu\widehat{\varphi}\widehat{\psi}
_{x})
\\
~~~~~~~~~~~~~~~~+\widehat{u}_{xxx}\overline{\widehat{u}}_{xx}(-6\lambda\mu^{2}\widehat{\varphi}\widehat{\psi}^{2}_{x}+\widehat{r}_{2})+\widehat{u}_{xx}\overline{\widehat{u}}_{xxx}(-6\lambda\mu^{2}\widehat{\varphi}\widehat{\psi}^{2}_{x}+\widehat{r}_{2})](0,t)dt
\\~~~~~~\leq E\displaystyle
\int_{0}^{T}(-N_{2}\lambda^{3}\mu^{3}\widehat{\varphi}^{3}(0,t)\widehat{\psi}
^{3}_{x}(0)|\widehat{u}_{xx}(0,t)|^{2}-K_{2}\lambda\mu\widehat{\varphi}(0,t)\widehat{\psi}_{x}(0)|\widehat{u}_{xxx}(0,t)|^{2})dt
\\~~~~~~\leq 0.
\end{array}
\end{array}
\end{eqnarray*}
Thus, \begin{eqnarray}\label{9} \widehat{V}(1)-\widehat{V}(0)\geq 0.
\end{eqnarray}
\par
Noting that
$\lim\limits_{t\rightarrow0^{+}}\widehat{a}(\cdot,t)=\lim\limits_{t\rightarrow
T^{-}}\widehat{a}(\cdot,t)=-\infty$, we have
\begin{eqnarray*}\widehat{u}(x,0)=\widehat{u}(x,T)=\widehat{u}_{x}(x,0)=\widehat{u}_{x}(x,T)=0~~~\forall~x\in I.\end{eqnarray*}
It is obvious that
\begin{eqnarray*}
\begin{array}{l}
\begin{array}{llll}
\displaystyle E\int_{0}^{T}d\widehat{M}=0.
\end{array}
\end{array}
\end{eqnarray*}
It is a straightforward calculation to show that
\begin{eqnarray*}
\begin{array}{l}
\begin{array}{llll}
|(\widehat{u}\overline{\widehat{u}}_{x}-\widehat{u}_{x}\overline{\widehat{u}})(-\frac{i}{2})(C_{1t}-B_{2xt}+C_{3xxt})+C_{1}D_{0}\widehat{u}\overline{\widehat{u}}_{x}+C_{1}\overline{D_{0}}\widehat{u}_{x}\overline{\widehat{u}}|\leq C\lambda^{4}\mu^{3}\widehat{\varphi}^{5}|\widehat{u}||\widehat{u}_{x}|,
\\|(\widehat{u}_{x}\overline{\widehat{u}}_{xx}-\widehat{u}_{xx}\overline{\widehat{u}}_{x})(\frac{i}{2}C_{3t})|\leq C\lambda\mu\widehat{\varphi}^{2}|\widehat{u}_{x}||\widehat{u}_{xx}|,
\\|B_{2}D_{0}\widehat{u}\overline{\widehat{u}}_{xx}+B_{2}\overline{D_{0}}\widehat{u}_{xx}\overline{\widehat{u}}|\leq C\lambda^{2}\mu^{2}\widehat{\varphi}^{3}|\widehat{u}||\widehat{u}_{xx}|,
\\|C_{3}D_{0}\widehat{u}\overline{\widehat{u}}_{xxx}+C_{3}\overline{D_{0}}\widehat{u}_{xxx}\overline{\widehat{u}}|\leq C\lambda^{2}\mu\widehat{\varphi}^{3}|\widehat{u}||\widehat{u}_{xxx}|,
\end{array}
\end{array}
\end{eqnarray*}
thus,
\begin{eqnarray*}
\begin{array}{l}
\begin{array}{llll}
|\widehat{u}\overline{\widehat{u}}_{x}[(-\frac{i}{2})(C_{1t}-B_{2xt}+C_{3xxt})+C_{1}D_{0}]+\widehat{u}_{x}\overline{\widehat{u}}[-\frac{i}{2}(C_{1t}-B_{2xt}+C_{3xxt})+C_{1}\overline{D_{0}}]
\\+(\widehat{u}_{x}\overline{\widehat{u}}_{xx}-\widehat{u}_{xx}\overline{\widehat{u}}_{x})(\frac{i}{2}C_{3t})+B_{2}D_{0}\widehat{u}\overline{\widehat{u}}_{xx}+B_{2}\overline{D_{0}}\widehat{u}_{xx}\overline{\widehat{u}}
+C_{3}D_{0}\widehat{u}\overline{\widehat{u}}_{xxx}+C_{3}\overline{D_{0}}\widehat{u}_{xxx}\overline{\widehat{u}}|
\\\leq C(\lambda^{7}\mu^{7}\widehat{\varphi}^{7}|\widehat{u}|^{2}
+\lambda^{5}\mu^{5}\widehat{\varphi}^{5}|\widehat{u}_{x}|^{2}+\lambda^{3}\mu^{3}\widehat{\varphi}^{3}|\widehat{u}_{xx}|^{2}+\lambda\mu\widehat{\varphi} |\widehat{u}_{xxx}|^{2}).
\end{array}
\end{array}
\end{eqnarray*}
Moreover, we can deduce that
\begin{eqnarray}\label{11}
\begin{array}{l}
\begin{array}{llll}
|\displaystyle E\int_{Q}[(d\widehat{u}d\overline{\widehat{u}}_{x}-d\widehat{u}_{x}d\overline{\widehat{u}})(-\frac{i}{2})(C_{1}-B_{2x}+C_{3xx})+(d\widehat{u}_{x}d\overline{\widehat{u}}_{xx}-d\widehat{u}_{xx}d\overline{\widehat{u}}_{x})(\frac{i}{2}C_{3})]dx|
\\\leq C\displaystyle E\int_{Q}(\lambda^{3}\mu^{3}\widehat{\varphi}^{3}\widehat{\theta}^{2}|g||g_{x}|
+\lambda^{2}\mu^{2}\widehat{\varphi}^{2}\widehat{\theta}^{2}|g||g_{xx}|+\lambda\mu\widehat{\varphi}\widehat{\theta}^{2}|g_{x}||g_{xx}|)dxdt
\\\leq C\displaystyle E\int_{Q}(\lambda^{4}\mu^{4}\widehat{\varphi}^{4}\widehat{\theta}^{2}|g|^{2}
+\lambda^{2}\mu^{2}\widehat{\varphi}^{2}\widehat{\theta}^{2}|g_{x}|^{2}+\widehat{\theta}^{2}|g_{xx}|^{2})dxdt.
\end{array}
\end{array}
\end{eqnarray}
From (\ref{1.2})-(\ref{11}), we can obtain that
\begin{eqnarray*}
\begin{array}{l}
\begin{array}{llll}
\displaystyle E\int_{Q}(\lambda^{7}\mu^{8}\widehat{\varphi}^{7}\widehat{\psi}_{x}^{8}|\widehat{u}|^{2}+\lambda^{5}\mu^{6}\widehat{\varphi}^{5}\widehat{\psi}_{x}^{6}|\widehat{u}_{x}|^{2}
+\lambda^{3}\mu^{4}\widehat{\varphi}^{3}\widehat{\psi}_{x}^{4}|\widehat{u}_{xx}|^{2}+\lambda\mu^{2}\widehat{\varphi}\widehat{\psi}_{x}^{2}|\widehat{u}_{xxx}|^{2})dxdt
\\\leq C\displaystyle E\int_{Q}(\lambda^{4}\mu^{4}\widehat{\varphi}^{4}\widehat{\theta}^{2}|g|^{2}
+\lambda^{2}\mu^{2}\widehat{\varphi}^{2}\widehat{\theta}^{2}|g_{x}|^{2}+\widehat{\theta}^{2}|g_{xx}|^{2}+\widehat{\theta}
^{2}|f|^{2}
\\~~~~~~~~+\lambda^{7}\mu^{7}\widehat{\varphi}^{7}|\widehat{u}|^{2}
+\lambda^{5}\mu^{5}\widehat{\varphi}^{5}|\widehat{u}_{x}|^{2}+\lambda^{3}\mu^{3}\widehat{\varphi}^{3}|\widehat{u}_{xx}|^{2}+\lambda\mu\widehat{\varphi} |\widehat{u}_{xxx}|^{2})dxdt.
\end{array}
\end{array}
\end{eqnarray*}
Recall that $|\widehat{\psi}_{x}|>0$ in $\overline{I}\setminus I_{0},$ it
follows that
\begin{eqnarray*}
\begin{array}{l}
\begin{array}{llll}
\displaystyle E\int_{Q\setminus
Q^{I_{0}}}(\lambda^{7}\mu^{8}\widehat{\varphi}^{7}|\widehat{u}|^{2}+\lambda^{5}\mu^{6}\widehat{\varphi}^{5}|\widehat{u}_{x}|^{2}
+\lambda^{3}\mu^{4}\widehat{\varphi}^{3}|\widehat{u}_{xx}|^{2}+\lambda\mu^{2}\widehat{\varphi}|\widehat{u}_{xxx}|^{2})dxdt
\\\leq C(\widehat{\psi})\displaystyle E\int_{Q}(\lambda^{4}\mu^{4}\widehat{\varphi}^{4}\widehat{\theta}^{2}|g|^{2}
+\lambda^{2}\mu^{2}\widehat{\varphi}^{2}\widehat{\theta}^{2}|g_{x}|^{2}+\widehat{\theta}^{2}|g_{xx}|^{2}+\widehat{\theta}^{2}|f|^{2}
\\~~~~~~~~+\lambda^{7}\mu^{7}\widehat{\varphi}^{7}|\widehat{u}|^{2}
+\lambda^{5}\mu^{5}\widehat{\varphi}^{5}|\widehat{u}_{x}|^{2}+\lambda^{3}\mu^{3}\widehat{\varphi}^{3}|\widehat{u}_{xx}|^{2}+\lambda\mu\widehat{\varphi} |\widehat{u}_{xxx}|^{2})dxdt,
\end{array}
\end{array}
\end{eqnarray*}
from which if we choose $\mu_{0}= C(\widehat{\psi})+1,$ then it holds that
\begin{eqnarray*}
\begin{array}{l}
\begin{array}{llll}
\displaystyle E\int_{Q\setminus
Q^{I_{0}}}(\lambda^{7}\mu^{7}\widehat{\varphi}^{7}|\widehat{u}|^{2}
+\lambda^{5}\mu^{5}\widehat{\varphi}^{5}|\widehat{u}_{x}|^{2}+\lambda^{3}\mu^{3}\widehat{\varphi}^{3}|\widehat{u}_{xx}|^{2}+\lambda\mu\widehat{\varphi} |\widehat{u}_{xxx}|^{2})dxdt
\\\leq C_{1}(\widehat{\psi})E\displaystyle [\int_{Q}(\lambda^{4}\mu^{4}\widehat{\varphi}^{4}\theta
^{2}|g|^{2}
+\lambda^{2}\mu^{2}\widehat{\varphi}^{2}\widehat{\theta}^{2}|g_{x}|^{2}+\widehat{\theta}^{2}|g_{xx}|^{2}+\widehat{\theta}^{2}|f|^{2})dxdt
\\~~~~~~~~+\displaystyle\int_{Q^{I_{0}}}(\lambda^{7}\mu^{7}\widehat{\varphi}^{7}|\widehat{u}|^{2}
+\lambda^{5}\mu^{5}\widehat{\varphi}^{5}|\widehat{u}_{x}|^{2}+\lambda^{3}\mu^{3}\widehat{\varphi}^{3}|\widehat{u}_{xx}|^{2}+\lambda\mu\widehat{\varphi} |\widehat{u}_{xxx}|^{2})dxdt].
\end{array}
\end{array}
\end{eqnarray*}
Then
\begin{eqnarray*}
\begin{array}{l}
\begin{array}{llll}
\displaystyle E\int_{Q\setminus
Q^{I_{0}}}(\lambda^{7}\mu^{7}\widehat{\varphi}^{7}|\widehat{u}|^{2}
+\lambda^{5}\mu^{5}\widehat{\varphi}^{5}|\widehat{u}_{x}|^{2}+\lambda^{3}\mu^{3}\widehat{\varphi}^{3}|\widehat{u}_{xx}|^{2}+\lambda\mu\widehat{\varphi} |\widehat{u}_{xxx}|^{2})dxdt\\
+\displaystyle E\int_{Q^{I_{0}}}(\lambda^{7}\mu^{7}\widehat{\varphi}^{7}|\widehat{u}|^{2}
+\lambda^{5}\mu^{5}\widehat{\varphi}^{5}|\widehat{u}_{x}|^{2}+\lambda^{3}\mu^{3}\widehat{\varphi}^{3}|\widehat{u}_{xx}|^{2}+\lambda\mu\widehat{\varphi} |\widehat{u}_{xxx}|^{2})dxdt
\\\leq C_{1}(\widehat{\psi}) E\displaystyle \int_{Q}(\lambda^{4}\mu^{4}\widehat{\varphi}^{4}\widehat{\theta}^{2}|g|^{2}
+\lambda^{2}\mu^{2}\widehat{\varphi}^{2}\widehat{\theta}^{2}|g_{x}|^{2}+\widehat{\theta}^{2}|g_{xx}|^{2}+\widehat{\theta}^{2}|f|^{2})dxdt\\
+\displaystyle E\int_{Q^{I_{0}}}(\lambda^{7}\mu^{7}\widehat{\varphi}^{7}|\widehat{u}|^{2}
+\lambda^{5}\mu^{5}\widehat{\varphi}^{5}|\widehat{u}_{x}|^{2}+\lambda^{3}\mu^{3}\widehat{\varphi}^{3}|\widehat{u}_{xx}|^{2}+\lambda\mu\widehat{\varphi} |\widehat{u}_{xxx}|^{2})dxdt,
\end{array}
\end{array}
\end{eqnarray*}
and thus
\begin{eqnarray*}
\begin{array}{l}
\begin{array}{llll}
\displaystyle E\int_{Q}(\lambda^{7}\mu^{7}\widehat{\varphi}^{7}|\widehat{u}|^{2}
+\lambda^{5}\mu^{5}\widehat{\varphi}^{5}|\widehat{u}_{x}|^{2}+\lambda^{3}\mu^{3}\widehat{\varphi}^{3}|\widehat{u}_{xx}|^{2}+\lambda\mu\widehat{\varphi} |\widehat{u}_{xxx}|^{2})dxdt
\\\leq C E[\displaystyle \int_{Q}(\lambda^{4}\mu^{4}\widehat{\varphi}^{4}\theta
^{2}|g|^{2}
+\lambda^{2}\mu^{2}\widehat{\varphi}^{2}\widehat{\theta}^{2}|g_{x}|^{2}+\widehat{\theta}^{2}|g_{xx}|^{2}+\widehat{\theta}^{2}|f|^{2})dxdt\\
+\displaystyle \int_{Q^{I_{0}}}(\lambda^{7}\mu^{7}\widehat{\varphi}^{7}|\widehat{u}|^{2}
+\lambda^{5}\mu^{5}\widehat{\varphi}^{5}|\widehat{u}_{x}|^{2}+\lambda^{3}\mu^{3}\widehat{\varphi}^{3}|\widehat{u}_{xx}|^{2}+\lambda\mu\widehat{\varphi} |\widehat{u}_{xxx}|^{2})]dxdt,
\end{array}
\end{array}
\end{eqnarray*}
from which it holds that
\begin{eqnarray*}
\begin{array}{l}
\begin{array}{llll}
\displaystyle E\int_{Q}(\lambda^{7}\widehat{\varphi}^{7}|\widehat{u}|^{2}+\lambda^{5}\widehat{\varphi}^{5}|\widehat{u}_{x}|^{2}
+\lambda^{3}\widehat{\varphi}^{3}|\widehat{u}_{xx}|^{2}+\lambda\widehat{\varphi} |\widehat{u}_{xxx}|^{2})dxdt
\\\leq C(\mu)E[\displaystyle \int_{Q}(\lambda^{4}\widehat{\varphi}^{4}\widehat{\theta}^{2}|g|^{2}
+\lambda^{2}\widehat{\varphi}^{2}\widehat{\theta}^{2}|g_{x}|^{2}+\widehat{\theta}^{2}|g_{xx}|^{2}+\widehat{\theta}^{2}f^{2})dxdt
\\~~~~~~+\displaystyle \int_{Q^{I_{0}}}(\lambda^{7}\widehat{\varphi}^{7}|\widehat{u}|^{2}
+\lambda^{5}\widehat{\varphi}^{5}|\widehat{u}_{x}|^{2}+\lambda^{3}\widehat{\varphi}^{3}|\widehat{u}_{xx}|^{2}+\lambda\widehat{\varphi} |\widehat{u}_{xxx}|^{2})]dxdt.
\end{array}
\end{array}
\end{eqnarray*}
Returning $\widehat{u}$ to $\widehat{\theta} y,$ we can obtain (\ref{12}).

\textbf{Step 2.} We shall eliminate the terms
$\displaystyle E\int_{Q^{I_{0}}} \lambda^{3}\widehat{\varphi}^{3}
\widehat{\theta}^{2}|y_{xx}|^{2}dxdt$ and $E\displaystyle \int_{Q^{I_{0}}}
\lambda^{5}\widehat{\varphi}^{5} \widehat{\theta}^{2}|y_{x}|^{2}dxdt$ in the right side of
(\ref{12}). Namely, we have (\ref{7}).
\par
Indeed, by the interpolation inequality, we obtain that for any
$\varepsilon>0,$
$$\int_{I_{0}}|(\widehat{\theta} y)_{x}|^{2}dx\leq \varepsilon \int_{I_{0}}|(\widehat{\theta} y)_{xx}|^{2}dx+\frac{C}{\varepsilon}\int_{I_{0}}|\widehat{\theta}y|^{2}dx,$$
where $C$ depends only on $I_{0}.$ Take $\varepsilon$ as
$\frac{\varepsilon_{1}}{2}(\frac{\lambda}{t(T-t)})^{-2}$ in above inequality,
where $\varepsilon_{1}$ will be fixed later. It holds that
\begin{eqnarray*}
\begin{array}{l}
\begin{array}{llll}
\displaystyle \int_{I_{0}}\widehat{\theta}^{2} |y_{x}|^{2}dx &\leq&
\displaystyle \varepsilon_{1}(\frac{\lambda}{t(T-t)})^{-2}
\int_{I_{0}}|(\widehat{\theta}
y)_{xx}|^{2}dx+\frac{C}{\varepsilon_{1}(\frac{\lambda}{t(T-t)})^{-2}}\int_{I_{0}}|\widehat{\theta}y|^{2}dx
\\&&\displaystyle+2\int_{I_{0}}\widehat{\theta}_{x}^{2}|y|^{2}dx.
\end{array}
\end{array}
\end{eqnarray*}
Choosing appropriate $\varepsilon_{1},$ we deduce that
\begin{eqnarray*}\int_{I_{0}}\widehat{\theta}^{2} |y_{x}|^{2}dx\leq \varepsilon \int_{I_{0}}\lambda^{-2}t^{2}(T-t)^{2}\widehat{\theta}^{2} |y_{xx}|^{2}dx
+C\int_{I_{0}}\lambda^{2}t^{-2}(T-t)^{-2}\widehat{\theta}^{2} |y|^{2}dx.
\end{eqnarray*}
\par
Noting that there exist two positive constants $N_{3}$ and $K_{3}$
such that
\begin{eqnarray*}\frac{N_{3}}{t(T-t)}\leq \widehat{\varphi} \leq
\frac{K_{3}}{t(T-t)},
\end{eqnarray*}
 thus, we can obtain that
\begin{eqnarray}\label{13}
\int_{Q^{I_{0}}}\lambda^{5}\widehat{\varphi}^{5}\widehat{\theta}^{2}|y_{x}|^{2}dxdt \leq
\varepsilon
\int_{Q^{I_{0}}}\lambda^{3}\widehat{\varphi}^{3}\widehat{\theta}
^{2}|y_{xx}|^{2}dxdt+C\int_{Q^{I_{0}}}\lambda^{7}\widehat{\varphi}^{7}\widehat{\theta}^{2}|y|^{2}dxdt,
\end{eqnarray}
by the same way, we have
\begin{eqnarray}\label{14}
\int_{Q^{I_{0}}}\lambda^{3}\widehat{\varphi}^{3}\widehat{\theta}^{2}|y_{xx}|^{2}dxdt \leq
\varepsilon
\int_{Q^{I_{0}}}\lambda\widehat{\varphi}\widehat{\theta}^{2}|y_{xxx}|^{2}dxdt+C\int_{Q^{I_{0}}}\lambda^{7}\widehat{\varphi}^{7}\widehat{\theta}^{2}|y|^{2}dxdt.
\end{eqnarray}
According to (\ref{13})-(\ref{14}) and (\ref{12}), we can obtain
(\ref{7}).

\subsection{Proof of Corollary \ref{C4}}
\par
According to (\ref{7}), we have
\begin{eqnarray*}
\begin{array}{l}
\begin{array}{llll}
~~~\displaystyle E\int_{Q}(\lambda\widehat{\varphi}
\widehat{\theta}^{2}|y_{xxx}|^{2}+\lambda^{3}\widehat{\varphi}^{3}\widehat{\theta}^{2}|y_{xx}|^{2}+\lambda^{5}\widehat{\varphi}^{5}\widehat{\theta}^{2}|y_{x}|^{2}+\lambda^{7}\widehat{\varphi}^{7}\widehat{\theta}^{2}|y|^{2})dxdt
\\\leq\displaystyle
C[E\int_{Q^{I_{0}}}(\lambda\widehat{\varphi}
\widehat{\theta}^{2}|y_{xxx}|^{2}+\lambda^{7}\widehat{\varphi}^{7}\widehat{\theta}^{2}|y|^{2}
)dxdt
\\+\displaystyle E\int_{Q}(\lambda^{4}\mu^{4}\widehat{\varphi}^{4}\widehat{\theta}^{2}|by+g|^{2}
+\lambda^{2}\mu^{2}\widehat{\varphi}^{2}\widehat{\theta}^{2}|(by+g)_{x}|^{2}+\widehat{\theta}^{2}|(by+g)_{xx}|^{2}+\widehat{\theta}^{2}|ay+f|^{2})dxdt].
\end{array}
\end{array}
\end{eqnarray*}
If we take $\lambda\geq C(a,b, T),$ where $C(a,b, T)$ is large enough, it follows that
\begin{eqnarray*}
\begin{array}{l}
\begin{array}{llll}
~~~\displaystyle E\int_{Q}(\lambda\widehat{\varphi}
\widehat{\theta}^{2}|y_{xxx}|^{2}+\lambda^{3}\widehat{\varphi}^{3}\widehat{\theta}^{2}|y_{xx}|^{2}+\lambda^{5}\widehat{\varphi}^{5}\widehat{\theta}^{2}|y_{x}|^{2}+\lambda^{7}\widehat{\varphi}^{7}\widehat{\theta}
^{2}|y|^{2})dxdt
\\\leq\displaystyle
C\Big[E\int_{Q^{I_{0}}}(\lambda\widehat{\varphi}
\widehat{\theta}^{2}|y_{xxx}|^{2}+\lambda^{7}\widehat{\varphi}^{7}\widehat{\theta}^{2}|y|^{2})dxdt
\\\displaystyle +E\int_{Q}(\lambda^{4}\widehat{\varphi}^{4}\widehat{\theta}^{2}|g|^{2}
+\lambda^{2}\widehat{\varphi}^{2}\widehat{\theta}^{2}|g_{x}|^{2}+\widehat{\theta}^{2}|g_{xx}|^{2}+\widehat{\theta}^{2}|f|^{2})dxdt\Big].
\end{array}
\end{array}
\end{eqnarray*}
By means of the definitions of $\widehat{\varphi}$ and $\widehat{\theta},$ it holds that
\begin{eqnarray*}
\begin{array}{l}
\begin{array}{llll}
\displaystyle E\int_{Q}(\lambda\widehat{\varphi}
\widehat{\theta}^{2}|y_{xxx}|^{2}+\lambda^{3}\widehat{\varphi}^{3}\widehat{\theta}^{2}|y_{xx}|^{2}+\lambda^{5}\widehat{\varphi}^{5}\widehat{\theta}^{2}|y_{x}|^{2}+\lambda^{7}\widehat{\varphi}^{7}\widehat{\theta}
^{2}|y|^{2})dxdt
\\\geq C\min\limits_{x\in \overline{I}}(\widehat{\varphi}(x,\frac{T}{2})\widehat{\theta}^{2}(x,\frac{T}{4}))\displaystyle E\int_{\frac{T}{4}}^{\frac{3T}{4}}\int_{I}(|y_{xxx}|^{2}+|y_{xx}|^{2}+|y_{x}|^{2}+|y|^{2})dxdt,
\\
\displaystyle
E\int_{Q^{I_{0}}}(\lambda\widehat{\varphi}\widehat{\theta}^{2}|y_{xxx}|^{2}+\lambda^{7}\widehat{\varphi}^{7}\widehat{\theta}^{2}|y|^{2})dxdt
\\\leq \displaystyle C\max_{(x,t)\in \overline{Q}}(\widehat{\varphi}^{7}(x,t)\widehat{\theta}^{2}(x,t))E\int_{Q^{I_{0}}}(|y_{xxx}|^{2}+|y|^{2})dxdt
\end{array}
\end{array}
\end{eqnarray*}
and
\begin{eqnarray*}
\begin{array}{l}
\begin{array}{llll}
\displaystyle E\Big[\int_{Q}(\lambda^{4}\widehat{\varphi}^{4}\widehat{\theta}^{2}|g|^{2}
+\lambda^{2}\widehat{\varphi}^{2}\widehat{\theta}^{2}|g_{x}|^{2}+\widehat{\theta}^{2}|g_{xx}|^{2}+\widehat{\theta}^{2}|f|^{2})dxdt\Big]
\\\leq \displaystyle C\max_{(x,t)\in \overline{Q}}(\widehat{\varphi}^{4}(x,t)\widehat{\theta}^{2}(x,t))E\int_{Q}(|g|^{2}
+|g_{x}|^{2}+|g_{xx}|^{2}+|f|^{2})dxdt.
\end{array}
\end{array}
\end{eqnarray*}
In view of  the above equalities and (\ref{7}), there holds
\begin{eqnarray}\label{1.36}
\begin{array}{l}
\begin{array}{llll}
\displaystyle E\int_{\frac{T}{4}}^{\frac{3T}{4}}\int_{I}(|y_{xxx}|^{2}+|y_{xx}|^{2}+|y_{x}|^{2}+|y|^{2})dxdt
\\\leq
\displaystyle C\frac{\max_{(x,t)\in \overline{Q}}(\widehat{\varphi}^{7}(x,t)\widehat{\theta}^{2}(x,t))}{\min\limits_{x\in \overline{I}}(\widehat{\varphi}(x,\frac{T}{2})\widehat{\theta}^{2}(x,\frac{T}{4}))}
\Big[E\int_{Q^{I_{0}}}(|y_{xxx}|^{2}+|y|^{2})dxdt
\\\displaystyle ~~~~~~~~~~~~~~~~+E\int_{Q}(|g|^{2}
+|g_{x}|^{2}+|g_{xx}|^{2}+|f|^{2})\Big].
\end{array}
\end{array}
\end{eqnarray}
It follows from (\ref{1.39}) with $s=3$ and (\ref{1.36}) that
\begin{eqnarray*}
\begin{array}{l}
\begin{array}{llll}
\displaystyle \frac{T}{2}E\|y_{0}\|_{X_{3}}^{2}
\\\displaystyle\leq C [E\int_{\frac{T}{4}}^{\frac{3T}{4}}\|y(t)\|_{X_{3}}^{2}dt+E\int_{0}^{T}(\|f(t)\|_{X_{3}}^{2}+\|g(t)\|_{X_{3}}^{2})dt]
\\\displaystyle\leq C\Big[E\int_{Q^{I_{0}}}(|y_{xxx}|^{2}+|y|^{2})dxdt+E\int_{0}^{T}(\|f(t)\|_{X_{3}}^{2}+\|g(t)\|_{X_{3}}^{2})dt\Big],
\end{array}
\end{array}
\end{eqnarray*}
namely, it holds that
\begin{eqnarray*}
\begin{array}{l}
\begin{array}{llll}
\displaystyle \|y_{0}\|_{X_{3}}^{2}
\leq C\Big[E\int_{Q^{I_{0}}}(|y_{xxx}|^{2}+|y|^{2})dxdt++E\int_{0}^{T}(\|f(t)\|_{X_{3}}^{2}+\|g(t)\|_{X_{3}}^{2})dt\Big],
\end{array}
\end{array}
\end{eqnarray*}
this implies (\ref{1.40}).
\subsection{Proof of Corollary \ref{C1}}
Taking $f=g=0$ in (\ref{1.40}) and considering $y\equiv 0$ in $Q^{I_{0}}$ $P$-a.s., we have
\begin{eqnarray*}
\|y_{0}\|_{X_{3}}\leq 0,
\end{eqnarray*}
thus, $y_{0}\equiv 0$ in $I$ $P$-a.s., this implies $y\equiv 0$ in $Q$ $P$-a.s.

\section{Proof of Theorem \ref{T3}, Corollary \ref{C5} and  Corollary \ref{C2}}
\subsection{Proof of Theorem \ref{T3}}
Proof of Theorem \ref{T3} is similar to Proof of Theorem \ref{T1}, we give a sketch of it.
\par
Indeed,
applying Corollary \ref{C3} with $l=\widetilde{l},$ then we have $\theta=\widetilde{\theta},u=\widetilde{u}=\widetilde{\theta}y$
and
\begin{eqnarray}\label{1.46}
\begin{array}{l}
\begin{array}{llll}
\displaystyle E\int_{Q}\Big\{(|\widetilde{u}|^{2}\{\cdot\}+|\widetilde{u}_{x}|^{2}\{\cdot\}+|\widetilde{u}_{xx}|^{2}\{\cdot\}+|\widetilde{u}_{xxx}|^{2}\{\cdot\})dt \\~~~~~~~~~~~~~~~~~~~+(\{\cdot\}_{x}+\{\cdot\}_{xx}+\{\cdot\}_{xxx}+\{\cdot\}_{xxxx})dt+d\widetilde{M}\Big\}dx
\\\leq-\displaystyle E\int_{Q}\Big\{\widetilde{u}\overline{\widetilde{u}}_{x}[(-\frac{i}{2})(C_{1t}-B_{2xt}+C_{3xxt})+C_{1}D_{0}]dt
\\~~~~~~~~~~~~+\widetilde{u}_{x}\overline{\widetilde{u}}[\frac{i}{2}(C_{1t}-B_{2xt}+C_{3xxt})+C_{1}\overline{D_{0}}]dt+(\widetilde{u}_{x}\overline{\widetilde{u}}_{xx}-\widetilde{u}_{xx}\overline{\widetilde{u}}_{x})(\frac{i}{2}C_{3t})dt
\\~~~~~~~~~~~~+B_{2}D_{0}\widetilde{u}\overline{\widetilde{u}}_{xx}dt+B_{2}\overline{D_{0}}\widetilde{u}_{xx}\overline{\widetilde{u}}dt
+C_{3}D_{0}\widetilde{u}\overline{\widetilde{u}}_{xxx}dt+C_{3}\overline{D_{0}}\widetilde{u}_{xxx}\overline{\widetilde{u}}dt
\\~~~~~~~~~~~~+(d\widetilde{u}d\overline{\widetilde{u}}_{x}-d\widetilde{u}_{x}d\overline{\widetilde{u}})(-\frac{i}{2})(C_{1}-B_{2x}+C_{3xx})+(d\widetilde{u}_{x}d\overline{\widetilde{u}}_{xx}-d\widetilde{u}_{xx}d\overline{\widetilde{u}}_{x})(\frac{i}{2}C_{3})\Big\}dx
\\+\displaystyle E\int_{Q}\widetilde{\theta}^{2}|f|^{2}dxdt.
\end{array}
\end{array}
\end{eqnarray}
where $\{\cdot\}_{x},\{\cdot\}_{xx},\{\cdot\}_{xxx},\{\cdot\}_{xxxx}$ and $\widetilde{M}$ are the same as in Theorem \ref{T2} with $u=\widetilde{u}.$
\par
The same argument in Proof of Theorem \ref{T1} shows that
\begin{eqnarray}\label{1.26}
\begin{array}{l}
\begin{array}{llll}
|\widetilde{u}|^{2}\{\cdot\}=16\lambda^{7}\mu^{8}\widetilde{\varphi}^{7}\widetilde{\psi}_{x}^{8}|\widetilde{u}|^{2}+\widetilde{R}_{0}|\widetilde{u}|^{2},
\\
|\widetilde{u}_{x}|^{2}\{\cdot\}=80\lambda^{5}\mu^{6}\widetilde{\varphi}^{5}\widetilde{\psi}_{x}^{6}|\widetilde{u}_{x}|^{2}+\widetilde{R}_{1}|\widetilde{u}_{x}|^{2},
\\|\widetilde{u}_{xx}|^{2}\{\cdot\}=16\lambda^{3}\mu^{4}\widetilde{\varphi}^{3}\widetilde{\psi}_{x}^{4}|\widetilde{u}_{xx}|^{2}+\widetilde{R}_{2}|\widetilde{u}_{xx}|^{2},
\\|\widetilde{u}_{xxx}|^{2}\{\cdot\}=16\lambda\mu^{2}\widetilde{\varphi}\widetilde{\psi}_{x}^{2}|\widetilde{u}_{xxx}|^{2}+\widetilde{R}_{3}|\widetilde{u}_{xxx}|^{2},
\end{array}
\end{array}
\end{eqnarray}
where
\begin{eqnarray*}
\begin{array}{l}
\begin{array}{llll}
|\widetilde{R}_{0}|\leq C\lambda^{7}\mu^{7}\widetilde{\varphi}^{7},
\\|\widetilde{R}_{1}|\leq C\lambda^{5}\mu^{5}\widetilde{\varphi}^{5},
\\|\widetilde{R}_{2}|\leq C\lambda^{3}\mu^{3}\widetilde{\varphi}^{3},
\\|\widetilde{R}_{3}|\leq C\lambda\mu\widetilde{\varphi}.
\end{array}
\end{array}
\end{eqnarray*}
\par
Noting that
\begin{eqnarray*}
\begin{array}{l}
\begin{array}{llll}
~~~\displaystyle E\int_{Q}(\{\cdot\}_{x}+\{\cdot\}_{xx}+\{\cdot\}_{xxx}+\{\cdot\}_{xxxx})dxdt
\\=\displaystyle E\int_{Q}\{|\widetilde{u}_{xx}|^{2}(-20\lambda^{3}\mu^{3}\widetilde{\varphi}^{3}\widetilde{\psi}^{3}_{x}+\widetilde{r}_{1})+|\widetilde{u}_{xxx}|^{2}(-4\lambda\mu\widetilde{\varphi}\widetilde{\psi}_{x})
\\
~~~+\widetilde{u}_{xxx}\overline{\widetilde{u}}_{xx}(-6\lambda\mu^{2}\widetilde{\varphi}\widetilde{\psi}^{2}_{x}+\widetilde{r}_{2})+\widetilde{u}_{xx}\overline{\widetilde{u}}_{xxx}(-6\lambda\mu^{2}\widetilde{\varphi}\widetilde{\psi}^{2}_{x}+\widetilde{r}_{2})\}_{x}dxdt
\\\triangleq \widetilde{V}(1)-\widetilde{V}(0),

\end{array}
\end{array}
\end{eqnarray*}
where
\begin{eqnarray*}
\begin{array}{l}
\begin{array}{llll}
|\widetilde{r}_{1}|\leq C\lambda^{2}\mu^{3}\widetilde{\varphi}^{2},
\\|\widetilde{r}_{2}|\leq C\lambda\mu\widetilde{\varphi}.
\end{array}
\end{array}
\end{eqnarray*}
By the same method in Proof of Theorem \ref{T1}, we can know
\begin{eqnarray}\label{1.23}
\begin{array}{l}
\begin{array}{llll}
\widetilde{V}(1)\geq0.
\end{array}
\end{array}
\end{eqnarray}
\par

It is a straightforward calculation to show that
\begin{eqnarray}
\begin{array}{l}
\begin{array}{llll}
\displaystyle E\int_{0}^{T}d\widetilde{M}=0,
\end{array}
\end{array}
\end{eqnarray}
\begin{eqnarray}\label{1.25}
\begin{array}{l}
\begin{array}{llll}
|\widetilde{u}\overline{\widetilde{u}}_{x}[(-\frac{i}{2})(C_{1t}-B_{2xt}+C_{3xxt})+C_{1}D_{0}]+\widetilde{u}_{x}\overline{\widetilde{u}}[-\frac{i}{2}(C_{1t}-B_{2xt}+C_{3xxt})+C_{1}\overline{D_{0}}]
\\+(\widetilde{u}_{x}\overline{\widetilde{u}}_{xx}-\widetilde{u}_{xx}\overline{\widetilde{u}}_{x})(\frac{i}{2}C_{3t})+B_{2}D_{0}\widetilde{u}\overline{\widetilde{u}}_{xx}+B_{2}\overline{D_{0}}\widetilde{u}_{xx}\overline{\widetilde{u}}
+C_{3}D_{0}\widetilde{u}\overline{\widetilde{u}}_{xxx}+C_{3}\overline{D_{0}}\widetilde{u}_{xxx}\overline{\widetilde{u}}|
\\\leq C(\lambda^{7}\mu^{7}\widetilde{\varphi}^{7}|\widetilde{u}|^{2}
+\lambda^{5}\mu^{5}\widetilde{\varphi}^{5}|\widetilde{u}_{x}|^{2}+\lambda^{3}\mu^{3}\widetilde{\varphi}^{3}|\widetilde{u}_{xx}|^{2}+\lambda\mu\widetilde{\varphi} |\widetilde{u}_{xxx}|^{2})
\end{array}
\end{array}
\end{eqnarray}
and
\begin{eqnarray}\label{1.24}
\begin{array}{l}
\begin{array}{llll}
\Big|\displaystyle E\int_{Q}[(d\widetilde{u}d\overline{\widetilde{u}}_{x}-d\widetilde{u}_{x}d\overline{\widetilde{u}})(-\frac{i}{2})(C_{1}-B_{2x}+C_{3xx})
+(d\widetilde{u}_{x}d\overline{\widetilde{u}}_{xx}-d\widetilde{u}_{xx}d\overline{\widetilde{u}}_{x})(\frac{i}{2}C_{3})]dx\Big|
\\\leq C\displaystyle E\int_{Q}(\lambda^{3}\mu^{3}\widetilde{\varphi}^{3}\widetilde{\theta}^{2}|g||g_{x}|
+\lambda^{2}\mu^{2}\widetilde{\varphi}^{2}\widetilde{\theta}^{2}|g||g_{xx}|+\lambda\mu\widetilde{\varphi}\widetilde{\theta}^{2}|g_{x}||g_{xx}|)dxdt
\\\leq C\displaystyle E\int_{Q}(\lambda^{4}\mu^{4}\widetilde{\varphi}^{4}\widetilde{\theta}^{2}|g|^{2}
+\lambda^{2}\mu^{2}\widetilde{\varphi}^{2}\widetilde{\theta}^{2}|g_{x}|^{2}+\widetilde{\theta}^{2}|g_{xx}|^{2})dxdt.
\end{array}
\end{array}
\end{eqnarray}
From (\ref{1.46})-(\ref{1.24}), we can obtain
that
\begin{eqnarray*}
\begin{array}{l}
\begin{array}{llll}
\displaystyle E\int_{Q}(\lambda^{7}\mu^{8}\widetilde{\varphi}^{7}\widetilde{\psi}_{x}^{8}|\widetilde{u}|^{2}+\lambda^{5}\mu^{6}\widetilde{\varphi}^{5}\widetilde{\psi}_{x}^{6}|\widetilde{u}_{x}|^{2}
+\lambda^{3}\mu^{4}\widetilde{\varphi}^{3}\widetilde{\psi}_{x}^{4}|\widetilde{u}_{xx}|^{2}+\lambda\mu^{2}\widetilde{\varphi}\widetilde{\psi}_{x}^{2}|\widetilde{u}_{xxx}|^{2})dxdt
\\\leq \widetilde{V}(0)+C\displaystyle E\int_{Q}(\lambda^{4}\mu^{4}\widetilde{\varphi}^{4}\widetilde{\theta}^{2}|g|^{2}
+\lambda^{2}\mu^{2}\widetilde{\varphi}^{2}\widetilde{\theta}^{2}|g_{x}|^{2}+\widetilde{\theta}^{2}|g_{xx}|^{2}+\widetilde{\theta}^{2}|f|^{2}
\\~~~~~~~~+\lambda^{7}\mu^{7}\widetilde{\varphi}^{7}|\widetilde{u}|^{2}
+\lambda^{5}\mu^{5}\widetilde{\varphi}^{5}|\widetilde{u}_{x}|^{2}+\lambda^{3}\mu^{3}\widetilde{\varphi}^{3}|\widetilde{u}_{xx}|^{2}+\lambda\mu\widetilde{\varphi} |\widetilde{u}_{xxx}|^{2})dxdt.
\end{array}
\end{array}
\end{eqnarray*}
Recall that $|\widetilde{\psi}_{x}|>0$ in $\overline{I},$ it
follows that
\begin{eqnarray*}
\begin{array}{l}
\begin{array}{llll}
\displaystyle E\int_{Q}(\lambda^{7}\mu^{8}\widetilde{\varphi}^{7}|\widetilde{u}|^{2}+\lambda^{5}\mu^{6}\widetilde{\varphi}^{5}|\widetilde{u}_{x}|^{2}
+\lambda^{3}\mu^{4}\widetilde{\varphi}^{3}|\widetilde{u}_{xx}|^{2}+\lambda\mu^{2}\widetilde{\varphi} |\widetilde{u}_{xxx}|^{2})dxdt
\\\leq C(\widetilde{\psi})\displaystyle \Big[\widetilde{V}(0)+E\int_{Q}(\lambda^{4}\mu^{4}\widetilde{\varphi}^{4}\widetilde{\theta}^{2}|g|^{2}
+\lambda^{2}\mu^{2}\widetilde{\varphi}^{2}\widetilde{\theta}^{2}|g_{x}|^{2}+\widetilde{\theta}^{2}|g_{xx}|^{2}+\widetilde{\theta}^{2}|f|^{2}
\\~~~~~~~~+\lambda^{7}\mu^{7}\widetilde{\varphi}^{7}|\widetilde{u}|^{2}
+\lambda^{5}\mu^{5}\widetilde{\varphi}^{5}|\widetilde{u}_{x}|^{2}+\lambda^{3}\mu^{3}\widetilde{\varphi}^{3}|\widetilde{u}_{xx}|^{2}+\lambda\mu\widetilde{\varphi} |\widetilde{u}_{xxx}|^{2})dxdt\Big],
\end{array}
\end{array}
\end{eqnarray*}
from which if we choose $\mu_{0}= C(\widetilde{\psi})+1,$ then it holds that
\begin{eqnarray*}
\begin{array}{l}
\begin{array}{llll}
\displaystyle E\int_{Q}(\lambda^{7}\mu^{7}\widetilde{\varphi}^{7}|\widetilde{u}|^{2}
+\lambda^{5}\mu^{5}\widetilde{\varphi}^{5}|\widetilde{u}_{x}|^{2}+\lambda^{3}\mu^{3}\widetilde{\varphi}^{3}|\widetilde{u}_{xx}|^{2}+\lambda\mu\widetilde{\varphi} |\widetilde{u}_{xxx}|^{2})dxdt
\\\leq C_{1}(\widetilde{\psi})\Big[\widetilde{V}(0)+E\displaystyle\int_{Q}(\lambda^{4}\mu^{4}\widetilde{\varphi}^{4}\widetilde{\theta}^{2}|g|^{2}
+\lambda^{2}\mu^{2}\widetilde{\varphi}^{2}\widetilde{\theta}^{2}|g_{x}|^{2}+\widetilde{\theta}^{2}|g_{xx}|^{2}+\widetilde{\theta}^{2}|f|^{2})dxdt\Big].
\end{array}
\end{array}
\end{eqnarray*}
Then
\begin{eqnarray*}
\begin{array}{l}
\begin{array}{llll}
\displaystyle E\int_{Q}(\lambda^{7}\mu^{7}\widetilde{\varphi}^{7}|\widetilde{u}|^{2}
+\lambda^{5}\mu^{5}\widetilde{\varphi}^{5}|\widetilde{u}_{x}|^{2}+\lambda^{3}\mu^{3}\widetilde{\varphi}^{3}|\widetilde{u}_{xx}|^{2}+\lambda\mu\widetilde{\varphi} |\widetilde{u}_{xxx}|^{2})dxdt
\\\leq C\Big[E\displaystyle
\int_{0}^{T}(\lambda^{3}\mu^{3}\widetilde{\varphi}^{3}(0,t )|\widetilde{u}_{xx}(0,t)|^{2}+\lambda\mu\widetilde{\varphi}(0,t)|\widetilde{u}_{xxx}(0,t)|^{2})dt
\\+E\displaystyle\int_{Q}(\lambda^{4}\mu^{4}\widetilde{\varphi}^{4}\widetilde{\theta}^{2}|g|^{2}
+\lambda^{2}\mu^{2}\widetilde{\varphi}^{2}\widetilde{\theta}^{2}|g_{x}|^{2}+\widetilde{\theta}^{2}|g_{xx}|^{2}+\widetilde{\theta}^{2}|f|^{2})dxdt\Big],
\end{array}
\end{array}
\end{eqnarray*}
from which it holds that
\begin{eqnarray*}
\begin{array}{l}
\begin{array}{llll}
\displaystyle E\int_{Q}(\lambda^{7}\widetilde{\varphi}^{7}|\widetilde{u}|^{2}
+\lambda^{5}\widetilde{\varphi}^{5}|\widetilde{u}_{x}|^{2}+\lambda^{3}\widetilde{\varphi}^{3}|\widetilde{u}_{xx}|^{2}+\lambda\widetilde{\varphi} |\widetilde{u}_{xxx}|^{2})dxdt
\\\leq C(\mu)\Big[E\displaystyle
\int_{0}^{T}(\lambda^{3}\widetilde{\varphi}^{3}(0,t)|\widetilde{u}_{xx}(0,t)|^{2}+\lambda\widetilde{\varphi}(0,t)|\widetilde{u}_{xxx}(0,t)|^{2})dt
\\+E\displaystyle\int_{Q}(\lambda^{4}\widetilde{\varphi}^{4}\widetilde{\theta}^{2}|g|^{2}
+\lambda^{2}\widetilde{\varphi}^{2}\widetilde{\theta}^{2}|g_{x}|^{2}+\widetilde{\theta}^{2}|g_{xx}|^{2}+\widetilde{\theta}^{2}|f|^{2})dxdt\Big].
\end{array}
\end{array}
\end{eqnarray*}
Returning $\widetilde{u}$ to $\widetilde{\theta} y,$ we can obtain (\ref{1.21}).

\subsection{Proof of Corollary \ref{C5}}
The proof of (\ref{1.43}) is similar to (\ref{1.40}). For completeness we give a sketch of it.
\par
According to (\ref{1.21}), we have
\begin{eqnarray*}
\begin{array}{l}
\begin{array}{llll}
~~~\displaystyle E\int_{Q}(\lambda\widetilde{\varphi}
\widetilde{\theta}^{2}y^{2}_{xxx}+\lambda^{3}\widetilde{\varphi}^{3}\widetilde{\theta}^{2}y^{2}_{xx}+\lambda^{5}\widetilde{\varphi}^{5}\widetilde{\theta}^{2}y^{2}_{x}+\lambda^{7}\widetilde{\varphi}^{7}\widetilde{\theta}^{2}y^{2})dxdt
\\\leq\displaystyle
C[E\displaystyle
\int_{0}^{T}(\lambda^{3}\widetilde{\varphi}^{3}(0,t)\widetilde{\theta}^{2}(0,t)y^{2}_{xx}(0,t)+\lambda\widetilde{\varphi}(0,t)\widetilde{\theta}^{2}(0,t)y^{2}_{xxx}(0,t))dt
\\+\displaystyle E\int_{Q}(\lambda^{4}\widetilde{\varphi}^{4}\widetilde{\theta}^{2}|by+g|^{2}
+\lambda^{2}\widetilde{\varphi}^{2}\widetilde{\theta}^{2}|(by+g)_{x}|^{2}+\widetilde{\theta}^{2}|(by+g)_{xx}|^{2}+\widetilde{\theta}^{2}|ay+f|^{2})dxdt].
\end{array}
\end{array}
\end{eqnarray*}
If we take $\lambda\geq C(a,b,T),$ where $C(a,b,T)$ is large enough, it follows that
\begin{eqnarray*}
\begin{array}{l}
\begin{array}{llll}
~~~\displaystyle E\int_{Q}(\lambda\widetilde{\varphi}
\widetilde{\theta}^{2}y^{2}_{xxx}+\lambda^{3}\widetilde{\varphi}^{3}\widetilde{\theta}^{2}y^{2}_{xx}+\lambda^{5}\widetilde{\varphi}^{5}\widetilde{\theta}^{2}y^{2}_{x}+\lambda^{7}\widetilde{\varphi}^{7}\widetilde{\theta}^{2}y^{2})dxdt
\\\leq\displaystyle
C\Big[E\displaystyle
\int_{0}^{T}(\lambda^{3}\widetilde{\varphi}^{3}(0,t )\widetilde{\theta}^{2}(0,t)y^{2}_{xx}(0,t)+\lambda\widetilde{\varphi}(0,t)\widetilde{\theta}^{2}(0,t)y^{2}_{xxx}(0,t))dt
\\+\displaystyle E\int_{Q}(\lambda^{4}\widehat{\varphi}^{4}\widetilde{\theta}^{2}|g|^{2}
+\lambda^{2}\widetilde{\varphi}^{2}\widetilde{\theta}^{2}|g_{x}|^{2}+\widetilde{\theta}^{2}|g_{xx}|^{2}+\widetilde{\theta}^{2}|f|^{2})dxdt\Big].
\end{array}
\end{array}
\end{eqnarray*}
By means of the definitions of $\widetilde{\varphi}$ and $\widetilde{\theta},$ it holds that
\begin{eqnarray*}
\begin{array}{l}
\begin{array}{llll}
\displaystyle E\int_{Q}(\lambda\widetilde{\varphi}
\widetilde{\theta}^{2}y^{2}_{xxx}+\lambda^{3}\widetilde{\varphi}^{3}\widetilde{\theta}^{2}y^{2}_{xx}+\lambda^{5}\widetilde{\varphi}^{5}\widetilde{\theta}^{2}y^{2}_{x}+\lambda^{7}\widetilde{\varphi}^{7}\theta
^{2}y^{2})dxdt
\\\geq C\min\limits_{x\in \overline{I}}(\widetilde{\varphi}(x,\frac{T}{2})\widetilde{\theta}^{2}(x,\frac{T}{4}))\displaystyle E\int_{\frac{T}{4}}^{\frac{3T}{4}}\int_{I}(y^{2}_{xxx}+y^{2}_{xx}+y^{2}_{x}+y^{2})dxdt,
\\
\displaystyle E\int_{0}^{T}(\lambda\widetilde{\varphi}(0,t)
\widetilde{\theta}^{2}(0,t)y^{2}_{xxx}(0,t)+\lambda^{3}\widetilde{\varphi}^{3}(0,t)\widetilde{\theta}^{2}(0,t)y_{xx}^{2}(0,t))dt
\\\leq \displaystyle C\max_{(x,t)\in \overline{Q}}(\widetilde{\varphi}^{3}(x,t)\widetilde{\theta}^{2}(x,t))E\int_{0}^{T}(y^{2}_{xxx}(0,t)+y_{xx}^{2}(0,t))dt,
\end{array}
\end{array}
\end{eqnarray*}
and
\begin{eqnarray*}
\begin{array}{l}
\begin{array}{llll}
\displaystyle E\Big[\int_{Q}(\lambda^{4}\widetilde{\varphi}^{4}\widetilde{\theta}^{2}|g|^{2}
+\lambda^{2}\widetilde{\varphi}^{2}\widetilde{\theta}^{2}|g_{x}|^{2}+\widetilde{\theta}^{2}|g_{xx}|^{2}+\widetilde{\theta}^{2}|f|^{2})dxdt\Big]
\\\leq \displaystyle C\max_{(x,t)\in \overline{Q}}(\widetilde{\varphi}^{4}(x,t)\widetilde{\theta}^{2}(x,t))E\int_{Q}(|g|^{2}
+|g_{x}|^{2}+|g_{xx}|^{2}+|f|^{2})dxdt.
\end{array}
\end{array}
\end{eqnarray*}
In view of  the above equalities and (\ref{1.21}), there holds that
\begin{eqnarray}\label{1.37}
\begin{array}{l}
\begin{array}{llll}
 E\int_{\frac{T}{4}}^{\frac{3T}{4}}\int_{I}(y^{2}_{xxx}+y^{2}_{xx}+y^{2}_{x}+y^{2})dxdt
\\\leq
\displaystyle C\frac{\max_{(x,t)\in \overline{Q}}(\widetilde{\varphi}^{4}(x,t)\widetilde{\theta}^{2}(x,t))}{\min_{x\in \overline{I}}(\widetilde{\varphi}(x,\frac{T}{2})\widetilde{\theta}^{2}(x,\frac{T}{4}))}
\Big[E\int_{0}^{T}(y^{2}_{xxx}(0,t)+y_{xx}^{2}(0,t))dt
+E\int_{Q}(|g|^{2}
+|g_{x}|^{2}+|g_{xx}|^{2}+|f|^{2})\Big].
\end{array}
\end{array}
\end{eqnarray}
It follows from (\ref{1.39}) with $s=3$ and (\ref{1.37}) that
\begin{eqnarray*}
\begin{array}{l}
\begin{array}{llll}
\displaystyle \frac{T}{2}E\|y_{0}\|_{X_{3}}^{2}
\\\displaystyle\leq C E\int_{\frac{T}{4}}^{\frac{3T}{4}}\|y(t)\|_{X_{3}}^{2}dt+C E\int_{0}^{T}(\|f(t)\|_{X_{3}}^{2}+\|g(t)\|_{X_{3}}^{2})dt
\\\displaystyle\leq C\Big[E\int_{0}^{T}(y_{xx}^{2}(0,t)+y_{xxx}^{2}(0,t))dt+E\int_{0}^{T}(\|f(t)\|_{X_{3}}^{2}+\|g(t)\|_{X_{3}}^{2})dt\Big],
\end{array}
\end{array}
\end{eqnarray*}
namely, it holds that
\begin{eqnarray*}
\begin{array}{l}
\begin{array}{llll}
\displaystyle \|y_{0}\|_{X_{3}}^{2}
\leq C\Big[E\int_{0}^{T}(y_{xx}^{2}(0,t)+y_{xxx}^{2}(0,t))dt+E\int_{0}^{T}(\|f(t)\|_{X_{3}}^{2}+\|g(t)\|_{X_{3}}^{2})dt\Big],
\end{array}
\end{array}
\end{eqnarray*}
this implies (\ref{1.43}).

\subsection{Proof of Corollary \ref{C2}}
Taking $f=g=0$ in (\ref{1.43}) and considering $y_{xx}(0,t)=y_{xxx}(0,t)\equiv0~~ {\rm{in}}~~ (0,T),~P-{\rm{a.s.}}$, we have
\begin{eqnarray*}
\|y_{0}\|_{X_{3}}\leq 0,
\end{eqnarray*}
thus, $y_{0}\equiv 0$ in $I$ $P$-a.s., this implies $y\equiv 0$ in $Q$ $P$-a.s.

\section{Proof of Theorem \ref{T5}}
\par The main idea in this part comes from \cite{0}.
\par
Since system (\ref{1.3}) is linear, we only need to show that the attainable set  at time $T>0$ with initial datum
$y_{0}=0$ is $L^{2}\mathcal (\Omega,\mathcal
{F}_{T},P; X_{3}^{\prime}),$ that is, for any $y_{1}\in L^{2}\mathcal (\Omega,\mathcal
{F}_{T},P; X_{3}^{\prime}),$ we can find controls
\begin{eqnarray*}
(u_{1},u_{2},g)\in L^{2}_{\mathcal{F}}(\Omega,L^{2}(0,T))\times L^{2}_{\mathcal{F}}(\Omega,L^{2}(0,T))\times L_{\mathcal {F}}^{2}(0,T; X_{3}^{\prime})
\end{eqnarray*}
such that the solution to the system (\ref{1.3}) with $y_{0}=0$ satisfies that $y(T)=y_{1}.$ We achieve this goal by duality argument.
\par
Let us set
\begin{eqnarray*}
\mathcal{W}=\{(z_{xx}(0,t),z_{xxx}(0,t),Z)~|~(z,Z)~{\rm{solves}} ~(\ref{15})~{\rm{with ~some}} ~z_{T}\in L^{2}\mathcal (\Omega,\mathcal
{F}_{T},P; X_{3})\}.
\end{eqnarray*}
Clearly, $\mathcal{W}$ is a linear subspace of $L^{2}_{\mathcal{F}}(\Omega,L^{2}(0,T))\times L^{2}_{\mathcal{F}}(\Omega,L^{2}(0,T))\times L_{\mathcal {F}}^{2}(0,T; X_{3}^{\prime}).$ Let us define a linear functional $\mathcal{L}$ on $\mathcal{W}$ as follows:
\begin{eqnarray*}
\mathcal{L}((z_{xx}(0,t),z_{xxx}(0,t),Z))=E(y_{1},\overline{z_{T}})_{X_{3}^{\prime},X_{3}}-E\int_{0}^{T}(f,\overline{z})_{X_{3}^{\prime},X_{3}}dt.
\end{eqnarray*}
From (\ref{1.29}) and Proposition \ref{P2} iii) with $s=3,h=0$, we see that $\mathcal{L}$ is a bounded linear functional on $\mathcal{W}$. By means of the Hahn-Banach Theorem, $\mathcal{L}$ can be extended to be a bounded linear functional on the space $L^{2}_{\mathcal{F}}(\Omega,L^{2}(0,T))\times L^{2}_{\mathcal{F}}(\Omega,L^{2}(0,T))\times L_{\mathcal {F}}^{2}(0,T;X_{3}^{\prime}).$
For simplicity, we still use $\mathcal{L}$ to denote this extension. Now, by the Riesz Representation Theorem, we know that there is a  random fields
\begin{eqnarray*}
(u_{1},u_{2},g)\in L^{2}_{\mathcal{F}}(\Omega,L^{2}(0,T))\times L^{2}_{\mathcal{F}}(\Omega,L^{2}(0,T))\times L_{\mathcal {F}}^{2}(0,T;X_{3}^{\prime})
\end{eqnarray*}
such that
\begin{eqnarray*}
\begin{array}{l}
\begin{array}{llll}
\displaystyle
E(y_{1},\overline{z}_{T})_{X_{3}^{\prime},X_{3}}+E\int_{0}^{T}i(f,\overline{z})_{X_{3}^{\prime},X_{3}}dt
\\~~~~~~~~~~~~~~~=\displaystyle E\int_{0}^{T}i(u_{1}(t)\overline{z}_{xxx}(0,t)+u_{2}(t)\overline{z}_{xx}(0,t))dt
+\displaystyle E\int_{0}^{T}(g,\overline{Z})_{X_{3}^{\prime},X_{3}}dt.
\end{array}
\end{array}
\end{eqnarray*}
\par
We claim that this  random fields $(u_{1},u_{2},g)$ is the control we need.
\par
In fact, from the definition of the solution to (\ref{1.3}), we have
\begin{eqnarray*}
\begin{array}{l}
\begin{array}{llll}
\displaystyle
\displaystyle
E(y(T),\overline{z}_{T})_{X_{3}^{\prime},X_{3}}
\\=\displaystyle E\int_{0}^{T}i(u_{1}(t)\overline{z}_{xxx}(0,t)+u_{2}(t)\overline{z}_{xx}(0,t))dt
\\~~~~~~~~~~~~~~~~~~~+\displaystyle E\int_{0}^{T}[-i(f,\overline{z})_{X_{3}^{\prime},X_{3}}+(g,\overline{Z})_{X_{3}^{\prime},X_{3}}]dt,
\end{array}
\end{array}
\end{eqnarray*}
thus, we have
\begin{eqnarray}\label{1.33}
E(y_{1},\overline{z}_{T})_{X_{3}^{\prime},X_{3}}=E(y(T),\overline{z}_{T})_{X_{3}^{\prime},X_{3}}.
\end{eqnarray}
Since $z_{T}$ can be arbitrary element in $L^{2}\mathcal (\Omega,\mathcal
{F}_{T},P; X_{3}),$ from the equality (\ref{1.33}), we get $y(T)=y_{1}.$

\noindent \footnotesize {\bf Acknowledgements.} \par I sincerely
 thank Professor Yong Li for many useful suggestions and help.\par
 \par

\baselineskip 9pt \renewcommand{\baselinestretch}{1.08}
\par
\par

\end{document}